\def\paperversion{2}
\ifnum\paperversion=1
\documentclass[moor,blindrev]{informs4}
\fi
\ifnum\paperversion=2
\documentclass[moor]{informs4}
\fi

\usepackage{eqndefns-left} %
\RequirePackage{tgtermes}
\RequirePackage{newtxtext}
\RequirePackage{newtxmath}
\RequirePackage{bm}
\RequirePackage{endnotes}

\OnePointTwoSpacedXI

\usepackage{algorithm,algorithmicx,amsmath,amssymb}
\usepackage{algpseudocode}
\usepackage{bbm}
\usepackage[mathlines,pagewise,left]{lineno}
\usepackage{amssymb}
\usepackage{color,xcolor,tikz,tikz-3dplot}
\usepackage{booktabs}
\usetikzlibrary{arrows,shapes,chains}  
\usepackage{multirow, array}
\usepackage{caption}
\usepackage{xspace}
\usepackage{comment}
\usepackage[toc,page]{appendix}
\usepackage{graphicx} %
\usepackage{booktabs,stackengine} %
\usepackage{pdflscape}
\usepackage[export]{adjustbox}
\usepackage{mathtools}
\usepackage{hyperref}
\usepackage{bm}
\usepackage[normalem]{ulem}
\usepackage{enumerate}

\usepackage{cleveref}
\usepackage[subtle, indent=normal, mathspacing=normal]{savetrees}
\usepackage{multirow}
\usepackage{subcaption} %
\usepackage{todonotes,bm}
\usepackage{url}

\def\R{\mathbb{R}}

\def\I{\mathcal{I}}

\def\hat{\widehat}

\def \A{\mathcal{A}}
\newcommand{\set}[1]{\mathcal{#1}}

\def\O{{\mathcal O}}

\def\H{{\mathcal H}}

\def\X{{\mathcal X}}
\def\Z{{\mathcal Z}}

\def\Re{{\mathbb R}}

\def\Q{{\mathcal Q}}
\def\O{{\mathcal O}}

\newcommand{\exclude}[1]{}

\DeclareMathOperator{\supp}{supp}

\usepackage{algorithm}
\usepackage{algorithmicx}
\usepackage{algpseudocode}
\algdef{SE}[DOWHILE]{Do}{doWhile}{\algorithmicdo}[1]{\algorithmicwhile\ #1}

\DeclareMathOperator{\conv}{conv}
\DeclareMathOperator{\cl}{cl}

\DeclareMathOperator{\Int}{int}

\DeclareMathOperator{\ext}{ext}

\usepackage{tikz}
\usepackage{tikz-3dplot}
\usetikzlibrary{shapes,decorations,arrows,calc,arrows.meta,fit,positioning}
\tikzset{
	-Latex,auto,node distance =1 cm and 1 cm,semithick,
	state/.style ={ellipse, draw, minimum width = 0.7 cm},
	point/.style = {circle, draw, inner sep=0.04cm,fill,node contents={}},
	bidirected/.style={Latex-Latex,dashed},
	el/.style = {inner sep=2pt, align=left, sloped}
}

\newcommand{\Qaffine}{\ensuremath{\mathcal{Q}^\text{Aff}}}
\newcommand{\comonotone}{comonotone\xspace}
\newcommand{\innerProd}[2]{\left\langle#1,#2\right\rangle}
\newcommand{\np}{\textsf{NP}}

\newcommand{\permMap}{\ensuremath{\Psi}}
\newcommand{\defeq}{\ensuremath{\triangleq}}

\newenvironment{pf}{\proof{Proof.}}{\hfill\Halmos\endproof}

\usepackage[sort&compress]{natbib}
 \bibpunct[, ]{[}{]}{,}{n}{}{,}%
 \def\bibfont{\small}%

\EquationsNumberedThrough    %

\TheoremsNumberedThrough     %
\ECRepeatTheorems  %

\MANUSCRIPTNO{MOOR-0001-2024.00}

\begin{document}

\RUNTITLE{A Geometric Perspective on Polynomially Solvable Convex Maximization}

\TITLE{A Geometric Perspective on Polynomially Solvable \\ Convex Maximization}

\ARTICLEAUTHORS{%
\AUTHOR{Shaoning Han}
\AFF{Department of Mathematics, National University of Singapore, Singapore, \EMAIL{shaoninghan@nus.edu.sg}}

\AUTHOR{Liangju Li}
\AFF{Department of Mathematics, National University of Singapore, Singapore, \EMAIL{e1583542@u.nus.edu}}

\AUTHOR{Yongchun Li}
\AFF{School of Data Science, The Chinese University of Hong Kong, Shenzhen, China, \EMAIL{yongchunli@cuhk.edu.cn}}
} %

\ABSTRACT{
Convex maximization encompasses a broad class of optimization problems and is generally $\np$-hard, even for low-rank objectives. 
This paper investigates structural conditions under which convex maximization becomes polynomially solvable. 
From a geometric perspective, we introduce \emph{comonotonicity}, a  structural property of the feasible region crucial for problem tractability, and establish mathematical characterizations of this property.
Under comonotonicity and  mild additional assumptions, we develop a unified enumerative framework showing that fixed-rank convex maximization is polynomially solvable. 
This viewpoint recovers several known tractability results that previously required separate analyses, such as fixed-rank convex matroid maximization and sparse principal component analysis (SPCA).
Furthermore, for the more structured class of \emph{standard comonotone} feasible regions, we refine the analysis via a lifting technique to achieve a square-root improvement in the complexity bound.  
Finally, applications to SPCA and its variants illustrate the broad applicability and effectiveness of the proposed framework.
}

\KEYWORDS{Comonotone sets, Convex maximization, Polynomial complexity, Hyperplane arrangement, Sparse PCA, Global optimization, Lifting} 

\maketitle

\section{Introduction} \label{sec:intro} 
In this paper, we consider convex maximization problems of the form:
\begin{equation}\label{eq:general-concave}
	\begin{aligned}
		\max_{x\in\R^n}\;&f(A x)\\
		\text{s.t. }&x\in\set{X}\subseteq \Re^n,
	\end{aligned}
\end{equation}
where $A\in \Re^{r\times n}$ has full row rank with $r\le n$, the function $f: \Re^r \to \Re$ is convex, and $\set{X}\subseteq\R^n$ is a generic feasible region. We assume that the set of global optimizers of \eqref{eq:general-concave} is nonempty. Following the literature (e.g., \cite{goyal2013fptas,han2025compact}), the integer $r$ is  often referred to as the \textit{rank} of the objective and serves as a measure of its nonlinearity.

Convex maximization \eqref{eq:general-concave} has long been a central topic in global optimization. Many fundamental optimization problems can be formulated directly or reformulated equivalently as convex maximization problems \cite{benson1995concave}, including zero-one integer programming \cite{giannessi1976connections,raghavachari1969connections}, bilinear programming \cite{benson1985finite}, and difference-of-convex programming \cite[Chapter~3]{horst2013global}. The broad applicability of \eqref{eq:general-concave} in turn makes \eqref{eq:general-concave} extremely challenging to solve, both theoretically and computationally. Indeed, even under certain special settings, problem \eqref{eq:general-concave} remains \np-hard. For example, \citet{pardalos1991quadratic} prove that maximizing a convex quadratic function of rank two ($r=2$) over a polytope is \np-hard. Moreover, \citet{mittal2013fptas} show that \eqref{eq:general-concave} generally does not admit a constant-factor approximation algorithm unless $\textsf{P}=\np$.

Despite its general intractability, this paper aims to study \emph{when problem \eqref{eq:general-concave} becomes polynomially solvable}. For any vector $x\in\R^n$, we denote by $\supp(x)=\{i\in[n]:\,x_i\neq 0\}$ the support of $x$. We make the following assumption. 
\begin{assumption}\label{assume:supp}
	For any subset $S\subseteq[n]$, the following restriction of problem \eqref{eq:general-concave} can be solved in polynomial time $\textsf{T}_1$:
	\begin{align}\label{eq:assumption1}
		\max_{x\in\set{X}} \left\{ f(A x) : \supp(x) = S \right\}.  
	\end{align}  
\end{assumption}
In other words, \Cref{assume:supp} means that problem \eqref{eq:general-concave} becomes tractable once the support of decision variables is fixed. Note that the fixed-support set $\{x\in\set{X}:\supp(x)=S \}$ is generally nonclosed. Consequently,  problem~\eqref{eq:assumption1} may not always attain its optimum.
More precisely, \Cref{assume:supp} is understood in the following algorithmic sense: there exists a polynomial-time algorithm 
that either  returns an optimal solution when one exists, or certifies that no optimal solution exists. 

To illustrate the modeling scope of \Cref{assume:supp}, we present three representative settings in which it arises naturally. First, the assumption is automatic in binary settings where $\set{X}\subseteq\{0,1\}^n$. Indeed, for any subset $S\subseteq[n]$, fixing $\supp(x)=S$ uniquely determines the vector $x$ by setting $x_i=1$ for $i\in S$ and $x_i=0$ for $i\notin S$. Thus, in this case, \eqref{eq:assumption1} reduces to evaluating the objective $f(Ax)$.     Second, \Cref{assume:supp} naturally appears in modern data science problems where support selection itself is part of the decision, such as sparse principal component analysis (SPCA) \cite{hotelling1933analysis,jeffers1967two} and variable selection for two-sample tests (2ST) \cite{wang2023variable}. To improve the effectiveness and interpretability of the underlying statistical model, such problems typically impose a zero-``norm'' constraint to promote sparse solutions. Once the support is fixed, \eqref{eq:assumption1} amounts to selecting a subset of features or samples and often reduces to a more conventional and tractable optimization problem.   Third, \Cref{assume:supp} is also closely connected to active set or manifold identification in classical nonlinear programming \cite{hare2004identifying,oberlin2006active}. Suppose the feasible region $\set{X}$ is described by equality constraints together with nonnegativity constraints $x\ge 0$. This standard form is without loss of generality (WLOG), since general inequality constraints can be converted into it by introducing slack variables.  Then in this context, determining the active constraints is equivalent to determining the support of continuous variables $x$.  Once the active constraints at optimality are identified, problem \eqref{eq:general-concave} locally reduces to an equality-constrained problem and is often more tractable in principle.

When $\set{X}$ or $\conv(\set{X})$ is a polytope, where $\conv(\set{X})$ represents the convex hull of $\set{X}$, the convex maximization problem \eqref{eq:general-concave} always admits an optimal solution which is an extreme point of $\set{X}$. This fact motivates a naive solution strategy: enumerate all extreme points of $\set{X}$, evaluate the objective value $f(Ax)$ at each such point, and then select the best one. However, the brute-force approach can easily struggle when the number of extreme points of $\set{X}$ becomes prohibitively large. This  may happen even for structurally simple feasible sets; for example, $\set{X}=\{x\in\R^n:\,\sum_{i=1}^n x_i\le s,\,0\le x\le 1  \}$, where $s\le n$ is a positive integer. A more effective approach exploits the rank of the objective and instead enumerates the extreme points of the lower-dimensional image $A\set{X}\defeq\{Ax:\,x\in\set{X}\}.$ To illustrate the importance of the rank parameter $r$ for the  tractability of \eqref{eq:general-concave}, consider the case $r=1$. In this simple case, $\conv(A\set{X})$ is just a bounded interval with two endpoints. Accordingly, solving \eqref{eq:general-concave} boils down to evaluating $f$ at these two points.  When $r=2$, one can efficiently enumerate the extreme points of the two-dimensional set $A\set{X}$ using parametric linear programming \cite{hassin1989maximizing, atamturk2009submodular, yu2023strong}. However, the parametric linear programming approach does not extend to higher ranks, where the structure of $\set{X}$ must come into play. 

When $\set{X}$ encodes the bases of a matroid and $r$ is fixed, \citet{onn2003convex} proves that \eqref{eq:general-concave} is polynomially solvable. The author constructs a zonotope from the edges (one-dimensional faces) of $\conv(\set{X})$ and reduces problem~\eqref{eq:general-concave} to solving $\O(n^{2r-2})$ linear programs. With matroids as a prototypical example, \citet{onn2004convex} subsequently extend the zonotope-reduction technique to more general discrete settings. They show that, for any fixed rank $r$, \eqref{eq:general-concave} can be solved in time  polynomial in dimension $n$ and the number of edge directions of $\conv(\set{X})$, provided that these directions can be efficiently constructed and that one has access to an oracle for solving linear programs over $\set{X}$. In particular, for an integer linear set $\set{X}=\{ x\in\mathbb{Z}^n:\,Qx=b,\,\ell\le x\le u \}$, where $Q$, $b$, $\ell$ and $u$ are compatible integer matrices and vectors, the \emph{Graver basis} of $Q$ forms a set of all edge directions of $\conv(\set{X})$ \cite{de2009convex}. Several refinements and generalizations along this line also exist in the literature \cite{de2008n,deza2018primitive,deza2021vertices,onn2001vector,scott2025normal}. We refer readers to the monograph \cite{onn2010nonlinear} for a comprehensive treatment of this direction. However, when \eqref{eq:general-concave} involves continuous variables and $\conv(\set{X})$ is no longer polyhedral, $\conv(\set{X})$ typically has infinitely many edges and extreme points. It is unclear how to extend the above results to the nonpolyhedral setting.

When $\conv(\set{X})$ is nonpolyhedral,  some specialized applications of \eqref{eq:general-concave}  are known in the literature to be polynomially solvable, most notably SPCA and the least trimmed squares problem (LTS) \cite{rousseeuw1985multivariate,gomez2025outlier}. In both problems, the main challenge is to optimally select a  subset of size $k$  from $n$ candidate  features or samples \cite{hossjer1995exact,li2024beyond}. Once the subset is determined, SPCA and LTS reduce to the standard PCA and ordinary least-square estimation problems, respectively. Given an $n\times n$ covariance matrix of rank $r$, \citet{asteris2014sparse} show that single-component SPCA can be solved by computing $\O(n^r)$ standard PCA problems. More recently, \citet{del2023sparse} demonstrates that general SPCA can be solved in $\O(n^{\min\{d,r\}(r^2+r)})$ oracle time, where $d$ denotes the number of principal components. Furthermore, \cite{del2023sparse} also establishes the first polynomial-time complexity for disjoint SPCA on fixed rank matrices.  For LTS, \citet{hossjer1995exact} develops an exact $\O(n^2\log n)$ algorithm for single-feature instances. \citet{mount2014least} later develop a topological plane-sweeping algorithm that solves general LTS problems in $\O(n^{r+2})$ oracle time, where $r$ denotes the number of features in this context.   To the best of our knowledge, apart from these specialized results, convex maximization problems with nonpolyhedral $\conv(\set{X})$ have not been systematically investigated in literature.

\subsection{Contributions and outline}
Following the literature, we also treat the rank $r$ as a fixed constant in this paper. Below we summarize the main contributions of the paper and outline the rest of the paper. Our contributions are three-fold:
\vspace{0.5em}

\noindent\emph{1. We introduce the notion of {comonotonicity}---a set property that plays a fundamental role in the polynomial solvability of \eqref{eq:general-concave}, and establish basic characterizations and properties of (standard) comonotone sets.} 

Roughly speaking, a set $\set{X}$ is called comonotone if for each linear optimization problem over $\set{X}$, the ordering pattern of its optimal solution depends only on that of the linear objective coefficients. When the two orderings always coincide, the set is further called standard comonotone.  It turns out that comonotonicity can capture the symmetric structure of a set and brings several well-known ordering-related set classes under a common umbrella,  including in particular matroids \cite{whitney1935abstract,lovasz1980matroid, calinescu2011maximizing, white1992matroid} and permutation-invariant sets \cite{kalra2020learning,kim2022convexification,lee2019set,li2025partial}.  %
While this phenomenon of order dependence is prevalent in the literature, comonotonicity  has not yet been  recognized and studied as a set property in its own right. In this paper, we explicitly identify this structure and investigate its implications.
\vspace{1em}

\noindent\emph{2. We show that maximizing a fixed-rank convex function over comonotone sets can be solved in polynomial oracle time under Assumption~\ref{assume:supp}.} 

When problem~\ref{eq:general-concave} admits a comonotone feasible region, we develop a general theoretical framework for problem \eqref{eq:general-concave} that generates $\O(n^{2r})$ supports of decision variables,  one of which coincides with the support of an optimal solution.  Consequently, under \Cref{assume:supp}, \eqref{eq:general-concave} can be solved by addressing $\O(n^{2r})$ fixed-support subproblems of the form \eqref{eq:assumption1}. This framework applies to any comonotone feasible set and, in a unified manner, recovers the known polynomial solvability result of convex matroid maximization \cite{onn2003convex} and nonpolyhedral applications surveyed above.
\vspace{1em}

\noindent\emph{3. For standard comonotone sets, we further refine the analysis via a lifting approach and obtain a square-root improvement in oracle complexity.}
\nopagebreak

We work in a suitably lifted space of the parameters under consideration and refine the support-generation framework for standard comonotone sets. This reduces the number of candidate supports from $\O(n^{2r})$ to $\O(n^{r+1})$. As representative illustrations, Table~\ref{tab:application} summarizes the resulting complexity bounds when our approach is applied to PCA-related instances of \eqref{eq:general-concave}, where ``--" indicates that no prior polynomial solvability result is known in the literature. Moreover, compared with the existing complexity analyses \cite{asteris2014sparse,del2023sparse}, our approach bypasses intricate problem-specific combinatorial constructions and yields a substantially simpler derivation. 

\begin{table}[h]
	\centering
	\caption{Oracle complexity  for PCA-related application instances of \eqref{eq:general-concave} }\label{tab:application}
		\setlength{\tabcolsep}{3pt}\renewcommand{\arraystretch}{1.5}%
		\begin{tabular}{c|c|c|c|c|c}\hline
			& {Single SPCA} & Nonnegative {SPCA}& SPCA & {Disjoint SPCA} & {2ST}  \\
			\hline
			This paper & $\O(n^r)$ & $\O(n^r)$  & $\O\left(n^{(r^2+r)/2}\right)$ & $\O\left(\left( n(d+1)^2 \right)^{\frac{d(r^2+r+2)}{2}-1}\right)$ & $\O(n^{r+1})$ \\ \hline
			Literature \cite{asteris2014sparse, del2023sparse} & $\O(n^r)$ & $-$&$\O\left(n^{\min\{d,r\}(r^2+r)}\right)$ & $\O\left(n^{d^2(r^2+r)/2}\right)$ &$-$ \\ 
			\hline
		\end{tabular}
\end{table}

The remainder of the paper is organized as follows. In Section~\ref{sec:comonotonicity}, we establish the theoretical foundation of comonotone sets. In Section~\ref{sec:poly}, we develop a unified framework for maximizing convex functions over comonotone sets and obtain polynomial complexity results under mild conditions. In Section~\ref{sec:inv}, we derive improved complexity bounds under standard comonotonicity by means of lifting arguments.  We also show there how additional sign conditions can be incorporated to sharpen the bounds further. In Section~\ref{sec:extension}, we extend the framework to settings with quasi-convex objectives or feasible regions that are not exactly comonotone but remain sufficiently close to a comonotone set. Section~\ref{sec:app} is devoted to applications. Finally, in Section~\ref{sec:conclusion}, we conclude the paper.

\subsection{Notations}
For a positive integer $n$, we let $\Re^n$ and $\Re_+^n$ denote the set of all  $n$-dimensional vectors and nonnegative vectors, respectively. We let $\mathbb{Z}_+$ and $\mathbb{Z}_{++}$ be the set of all nonnegative integers and positive integers, respectively.
Let $[n]=\{1,2,\cdots,n\}$. We denote by $\Pi_n$ the set of all permutations over $[n]$. We let $e$ denote the vector of all ones,  let $e^i$ denote the $i$-th coordinate vector for each $i$, let $I$ denote the identity matrix, with their size being clear in context. For a vector $x\in \Re^n$, we let $\|x\|_0$  denote the number of nonzero entries of $x$,   and let $|x|=(|x_1|, \cdots, |x_n|)^{\top}\in\R^n$ contain the absolute entries of $x$.
For a matrix $X$ and a positive integer $d$, we let $\|X\|_F$ denote its Frobenius norm, let $\|X\|_0$ denote the number of nonzero rows of $X$, and let $\|X\|_{(d)}$ denote the sum of its $d$ largest eigenvalues.  
For a symmetric matrix $X$, we let $\lambda_{\max}(X)$ denote its largest eigenvalue, and given an index set $S\subseteq[n]$, let $X_{S,S}$ denote a principal submatrix of $X$ indexed by $S$. For a set $\set{D}\subseteq\R^n$, we denote by $\conv(\set{D})$ its convex hull,  by $\Int\set{D}$ its interior, by $\cl\set{D}$ its closure, and by $\dim(\set{D})$ its dimension.
Given a hyperplane $H=\{x\in \Re^n: a^{\top}x=b\}$, we denote its two open half-spaces by 
$$H^{>}=\{x\in \Re^n: a^{\top}x>b\}, \text{ and } H^{<}=\{x\in \Re^n: a^{\top}x<b\}.$$

For a permutation $\pi\in \Pi_n$ and a vector $x\in \Re^n$, we say that $x$ is \emph{sorted} by $\pi$ if $x_{\pi(1)}\ge \cdots \ge x_{\pi(n)}$ holds. We also call $\pi$ an ordering of $x$.   Note that the permutation that sorts a given vector $x$ is not unique  when there are ties. For example, the vector $x=[1, 0,1]^{\top}$ can be sorted by  $\pi=(1, 3, 2)$ or $(3, 1, 2)$. We also denote by $x(\pi)\in\R^n$ the vector obtained by permuting the entries of $x$ according to $\pi$, that is, $[x(\pi)]_{\pi(1)}\ge [x(\pi)]_{\pi(2)}\ge\dots\ge [x(\pi)]_{\pi(n)}$. For example, given the vector $x=(-1,0,1)^\top$ and the permutation $\pi=(2,1,3)$, the permuted vector $x(\pi)=(0,1,-1)$.
Given a permutation $\pi$, we define a cone
$\Z(\pi) = \{x \in \Re^n:  x_{\pi(1)} \ge \cdots \ge x_{\pi(n)} \}$ and its nonnegative counterpart $\Z^+(\pi) =\Z(\pi)\cap\R_+^n$. For example, when $n=2$ and $\pi=(2,1)$, one has $\Z(\pi)=\{x\in\R^2:\,x_2\ge x_1 \}$. Naturally, we have $x(\pi)\in\Z(\pi)$ for all $x\in\R^n$ and $\pi\in\Pi_n$.

\section{Comonotone sets}\label{sec:comonotonicity}
In this section,  we first formally introduce the notion of comonotonicity. We then establish structural properties and basic characterizations of comonotone sets, with particular emphasis on \emph{standard comonotonicity}. We also present a range of examples to illustrate the definition and to anchor the geometric intuition behind the results. These structural results and examples serve as the geometric cornerstone of our complexity analysis.

We now state the definition of (standard) comonotone sets, which captures the geometric intuition that optimal solutions to a linear program should inherit the sorting pattern of the cost vector. This alignment is formalized through a permutation mapping that specifies the induced ordering by the objective.
\begin{definition}[Comonotone sets]\label{def:map}
	A set $\X\subseteq \Re^n$ is called \emph{\comonotone} if there exists a \emph{permutation mapping} $\permMap:\Pi_n\to \Pi_n$ such that for every permutation $\pi \in \Pi_n$ and every cost vector $v\in \Z(\pi)$,  
	whenever the problem 
	$$\max_{x\in \X}v^\top x,$$
	attains its optimum, it admits an optimal solution $x^*$ sorted by $\permMap(\pi)$, that is,
	$$x^*\in \Z\left(\permMap(\pi)\right).$$ If one can choose $\permMap$ to be the identity mapping, namely $\permMap(\pi)=\pi$ for all $\pi\in\Pi_n$,  then $\X$ is called
	\emph{standard \comonotone}.%
\end{definition} 
Below provides a simple example to illustrate the  concept.
\begin{example}\label{eg:simple}
	The set $\X=\{x\in \{0,1\}^n: e^{\top} x=s\}$ with $s\in [n]$ is standard \comonotone. Indeed, if $v\in \Z(\pi)$, an optimal solution to $\max\{ v^\top x:\;x\in\set{X} \}$ is obtained by setting
	$x_{\pi(1)}=\cdots=x_{\pi(s)}=1$ and $x_{\pi(s+1)}=\cdots=x_{\pi(n)}=0$, which
	belongs to $\Z(\pi)$. Thus, one can take $\permMap(\pi)=\pi$ for any $ \pi\in\Pi_n$. \hfill\Halmos
\end{example}

While comonotonicity is defined through  an admissible permutation mapping $\permMap$, the mapping is best viewed as a \emph{witness} of an underlying order compatibility built into $\set{X}$, rather than a genuine source of freedom. 
Theorem~\ref{thm:standard-comonotone} below highlights the resulting combinatorial rigidity: when $\permMap$ is surjective, so that no ordering type is missing or collapsed, there is essentially only one substantive realization, that is, $\set{X}$ must be a standard comonotone set. We first recall the rearrangement inequality to prove Theorem~\ref{thm:standard-comonotone}.
\begin{lemma}[Rearrangement inequality {\cite[p. 261]{hardy1952inequalities}}]\label{lem:rearrangement-ineq}
	For all $x,v\in\R^n$, it holds %
	\[ [v(\pi)]^\top x(\pi)\ge v^\top x,\quad\forall \pi\in\Pi_n. \] 
\end{lemma}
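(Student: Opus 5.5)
The statement is the classical rearrangement inequality: for every $\pi\in\Pi_n$, pairing $v(\pi)$ with $x(\pi)$, which are sorted by the same permutation, gives an inner product at least $v^\top x$. My plan is a finite exchange argument, as in the standard proof in \cite{hardy1952inequalities}.

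\emph{Step 1: reduce to co-sorted pairings.} Since $v(\pi)$ and $x(\pi)$ are both sorted by $\pi$, their inner product equals $\sum_{k=1}^n v_{[k]}x_{[k]}$, where $v_{[1]}\ge\cdots\ge v_{[n]}$ and $x_{[1]}\ge\cdots\ge x_{[n]}$ denote the decreasingly sorted entries. This value does not depend on $\pi$, and it is unaffected by ties in the ordering. So it suffices to show that
\[
\sum_{k} v_{[k]}x_{[k]}\ \ge\ \sum_{i} v_i x_i .
\]
Equivalently, write $v^\top x=\sum_k v_{[k]}\,x_{\sigma(k)}$ for some bijection $\sigma$. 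Here $\sigma$ matches the $k$-th largest entry of $v$ (ties broken by a fixed ordering $\tau$ of $v$) with the corresponding entry of $x$.

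\emph{Step 2: exchange step.} Suppose the sequence $(x_{\sigma(1)},\dots,x_{\sigma(n)})$ is not nonincreasing. Then there exist $k<l$ with $x_{\sigma(k)}<x_{\sigma(l)}$. Swapping these two $x$-values changes the sum by
\[
(v_{[k]}-v_{[l]})\bigl(x_{\sigma(l)}-x_{\sigma(k)}\bigr)\ge 0,
\]
since $v_{[k]}\ge v_{[l]}$. Each such swap removes at least one inversion of the sequence, so the process terminates. This happens after at most $\binom n2$ swaps, for instance by repeatedly swapping adjacent inversions as in bubble sort.

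\emph{Step 3: conclude.} At termination the $x$-values appear in nonincreasing order, so the sum equals $\sum_k v_{[k]}x_{[k]}=[v(\pi)]^\top x(\pi)$. Since the sum never decreased along the way, this value is at least $v^\top x$.

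I expect no step to be a real obstacle. The only point needing care is Step 1's bookkeeping: the notation $x(\pi)$ places sorted values at positions $\pi(1),\dots,\pi(n)$, and one must check that with ties the resulting inner product is still $\sum_k v_{[k]}x_{[k]}$. This holds because the sorted value sequence is the same however ties are broken.
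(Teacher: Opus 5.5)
Your exchange argument is correct and complete. This includes Step 1's check that $[v(\pi)]^\top x(\pi)=\sum_k v_{[k]}x_{[k]}$ however ties are broken, and the fact that swapping any inverted pair strictly lowers the inversion count. The paper gives no proof of its own and only cites Hardy--Littlewood--P\'olya, whose proof of this theorem is the same pairwise-interchange argument, so your route matches the intended one.
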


For a permutation mapping $\permMap:\Pi_n\to\Pi_n$ and $k\in\mathbb{Z}_+$, let $\permMap^k$ denote the $k$-fold composition of $\permMap$ (i.e., $\permMap^k=\underbrace{\permMap\circ\cdots\circ\permMap}_{k\text{ times}}$), where $\permMap^0$ is understood as the identity mapping by convention.

\begin{theorem}\label{thm:standard-comonotone}
	Suppose that $\set{X}\subseteq\R^n$ is a compact \comonotone set under permutation mapping $\permMap:\Pi_n\to\Pi_n$. If $\permMap$ is surjective, then $\set{X}$ is standard \comonotone.
\end{theorem}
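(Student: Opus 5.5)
The plan is to compare values of the support function $h(v)\defeq\max_{x\in\set{X}}v^\top x$ rather than solutions directly. Compactness of $\set{X}$ makes $h$ finite and guarantees that every linear problem over $\set{X}$ attains its optimum, so the comonotone property applies to every cost vector. Since $\Pi_n$ is finite, a surjective $\permMap$ is a bijection, that is, a permutation of the finite set $\Pi_n$. Hence there is $k\ge 1$ with $\permMap^k$ equal to the identity, and $\permMap^{-1}$ is well defined.

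\emph{Step 1 (one-step inequality).} Fix $\sigma\in\Pi_n$ and $u\in\Z(\sigma)$. By comonotonicity there is an optimal $x^*\in\set{X}\cap\Z(\permMap(\sigma))$. Since $x^*$ is already sorted by $\permMap(\sigma)$, we have $x^*(\permMap(\sigma))=x^*$; ties do not matter here, because permuting equal entries leaves the vector unchanged. Lemma~\ref{lem:rearrangement-ineq} then gives
\[
h(u)=u^\top x^*\le [u(\permMap(\sigma))]^\top x^*\le \max_{x\in\set{X}\cap\Z(\permMap(\sigma))}[u(\permMap(\sigma))]^\top x\le h\bigl(u(\permMap(\sigma))\bigr).
\]
The rearranged vector $u(\permMap(\sigma))$ has the same multiset of entries as $u$ and lies in $\Z(\permMap(\sigma))$. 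So the step can be repeated with $\permMap(\sigma)$ in place of $\sigma$.

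\emph{Step 2 (closing the cycle).} Fix $\pi$ and $v\in\Z(\pi)$. Set $\sigma_0=\permMap^{-1}(\pi)$ and $u=v(\sigma_0)\in\Z(\sigma_0)$. Rearranging $u$ by $\pi$ gives back $v$ itself, since $v$ is already sorted by $\pi$. Applying Step 1 first at $\sigma_0$, and then successively at $\pi,\permMap(\pi),\dots$, produces the chain
\[
h(u)\le \max_{x\in\set{X}\cap\Z(\pi)}v^\top x\le h(v)\le h\bigl(v(\permMap(\pi))\bigr)\le\cdots\le h\bigl(v(\permMap^{k}(\sigma_0))\bigr)=h(u).
\]
The chain closes because $\permMap^k(\sigma_0)=\sigma_0$ and $v(\sigma_0)=u$, the rearrangement depending only on the multiset of entries. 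Hence every inequality in the chain is an equality. In particular,
\[
\max_{x\in\set{X}\cap\Z(\pi)}v^\top x=h(v).
\]

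\emph{Step 3 (conclusion).} The set $\set{X}\cap\Z(\pi)$ is compact, and it is nonempty by Step 1 applied at $\sigma_0$. So the maximum above is attained at some $x^\circ\in\set{X}\cap\Z(\pi)$, and $x^\circ$ is optimal for $\max_{x\in\set{X}}v^\top x$. Thus $\permMap$ can be replaced by the identity mapping, and $\set{X}$ is standard \comonotone.

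The main obstacle is getting the bookkeeping of rearrangements right. We must check that each intermediate cost vector lies in the correct cone $\Z(\cdot)$ for the next application of comonotonicity, that sorted vectors are fixed by their own rearrangement even with ties, and that after $k$ steps the chain returns exactly to $h(u)$. The choice to start at $\permMap^{-1}(\pi)$, rather than at $\pi$, is what places the set $\set{X}\cap\Z(\pi)$ inside the chain.
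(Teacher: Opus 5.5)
Your proof is correct and is essentially the paper's argument: both propagate optimality along the orbit of $\pi$ under the bijection $\Psi$, alternating comonotone optimality with the rearrangement inequality until the cycle closes. Phrasing the chain through support-function values and starting at $\Psi^{-1}(\pi)$ is only cosmetic. The paper's final iterate $x^{\hat k}$ is optimal for $v(\Psi^{\hat k-1}(\pi))=v(\Psi^{-1}(\pi))$ and sorted by $\pi$, so it is exactly your $x^*$ for $u=v(\sigma_0)$.
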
 
\begin{pf}
	Note that for a \comonotone set, the choice of permutation mapping may not be unique in Definition~\ref{def:map}. Although $\permMap$ itself is not necessarily identity, our goal is to show that the
	identity mapping can be used instead in Definition~\ref{def:map}. This is equivalent to prove that for any $\pi\in\Pi_n$ and $v\in\set{Z}(\pi)$, one can find a solution $x^*\in\set{Z}(\pi)\cap\argmax\{ v^\top x:\;x\in\set{X}\}$. 
	
	Consider an arbitrary but fixed $\pi\in\Pi_n$ and $v\in\mathcal Z(\pi)$. 
	We first show that there exists $\hat{k}\in\mathbb Z_{+}$ such that 
	$\permMap^{\hat{k}}(\pi)=\pi$. Since $\Pi_n$ is finite, the iterates
	\[
	\pi,\ \permMap(\pi),\ \permMap^2(\pi),\ \ldots
	\]
	can take only finitely many values. Hence, there exist indices $0\le a<b$ satisfying
	$\permMap^{a}(\pi)=\permMap^{b}(\pi)$. Because $\permMap:\Pi_n\to\Pi_n$ is
	surjective and $\Pi_n$ is finite, $\permMap$ must be bijective, which implies that $\permMap^a$ is also bijective and thus injective. 
	Applying the injectivity of $\permMap^a$ to the equality $\permMap^{a}(\pi)=\permMap^{b}(\pi)=\permMap^a(\permMap^{b-a}(\pi))$
	yields $\pi=\permMap^{b-a}(\pi)$. Setting $\hat{k}\defeq b-a$ gives
	$\permMap^{\hat{k}}(\pi)=\pi$, as claimed.
	
	Next, we propagate optimality through the iterates of $\permMap$ and carry it back to $\mathcal Z(\pi)$. Specifically, we define a sequence of permuted cost vectors by $v^k=v(\permMap^{k-1}(\pi))$ for $k\in\mathbb{Z}_{++}$. It follows that $v^1=v$ and by construction $v^k\in\Z(\permMap^{k-1}(\pi))$ for all $k$. Moreover, since $\set{X}$ is \comonotone under $\permMap$, for each $k\in\mathbb{Z}_{++}$, one can choose an optimal solution $x^k$ to $\max_{x\in\set{X}}(v^k)^\top x$ such that $x^k$ is sorted by $\permMap(\permMap^{k-1}(\pi))=\permMap^k(\pi)$. The optimality of $x^k$ implies that 
	\begin{equation}\label{eq:optimality-chain-ineq}
		\innerProd{v^k}{x^k}\ge \innerProd{v^k}{x^{k-1}}, \quad\forall k. 
	\end{equation}
	Furthermore, since $v^k$ is a permutation of $v^{k-1}$, and both $v^k$ and $x^{k-1}$ are sorted by the same permutation $\permMap^{k-1}(\pi)$, one can deduce from \Cref{lem:rearrangement-ineq} that 
	\begin{equation}\label{eq:rearrangement-chain-ineq}
		\innerProd{v^k}{x^{k-1}}\ge \innerProd{v^{k-1}}{x^{k-1}},\quad \forall k.
	\end{equation}
	Chaining inequalities \eqref{eq:optimality-chain-ineq} and \eqref{eq:rearrangement-chain-ineq} and combining $\permMap^{\hat{k}}(\pi)=\pi$, one obtains
	\[ \innerProd{v^1}{x^1}\le\innerProd{v^2}{x^1}\le\innerProd{v^2}{x^2}\le\dots\le \innerProd{v^{\hat{k}+1}}{x^{\hat{k}}}=\innerProd{v\left(\permMap^{\hat{k}}(\pi)\right)}{x^{\hat{k}}}=\innerProd{v(\pi)}{x^{\hat{k}}}. \] 
	Because $v^1=v$ and $v(\pi)=v$ (due to $v\in\Z(\pi)$), the inequality chain reduces to 
	\begin{equation}\label{eq:reduced-chain}
		\innerProd{v}{x^1}\le\innerProd{v}{x^{\hat{k}}}.
	\end{equation} Moreover, since $x^1$ is optimal for the linear coefficients $v$, the inequality~\eqref{eq:reduced-chain} implies that $x^{\hat{k}}$ is also an optimal solution to $\max\{ v^\top x:\;x\in\set{X} \}$. Finally, by construction $x^{\hat{k}}$ is sorted by
	$\permMap^{\hat{k}}(\pi)=\pi$, i.e., $x^{\hat{k}}\in \Z(\pi)$. Hence we find the desired solution $x^*=x^{\hat{k}}$,
	which completes the proof.   
\end{pf}

To illustrate Theorem~\ref{thm:standard-comonotone}, we specialize to the planar case $n=2$. 
Since  $\Pi_2=\{(1, 2), (2,1)\}$ consists of only two permutations, if $\permMap$ is not surjective, then the compact comonotone set $\set{X}$ must lie entirely in one ordering cone $\Z(1,2)$ or $\Z(2,1)$.
In other words, comonotonicity is always {standard} in $\R^2$ unless the  set $\set{X}$ forces a fixed ordering; see Figure~\ref{fig:2D-comonotone-sets} for illustration. 
\begin{figure}[h]
	\centering
	\begin{subfigure}[t]{0.32\textwidth}
		\centering
		\begin{tikzpicture}[scale=1.,>=Stealth,baseline]
			\draw[->] (0,0) -- (3,0) node[below] {$x_1$};
			\draw[->] (0,0) -- (0,3) node[left] {$x_2$};
			\draw[dashed,black,-] (0,0) -- (2.7,2.7);
			\fill[black!15, opacity=0.7] (0.7,1.9) .. controls (1.2,2.6) and (2.4,2.3) .. (2.3,1.3)
			.. controls (1.2,1.4) and (1.2,0.4) .. (0.7,0.7) -- cycle;
			\draw[black!60] (0.7,1.9) .. controls (1.2,2.6) and (2.4,2.3) .. (2.3,1.3)
			.. controls (1.2,1.4) and (1.2,0.4) .. (0.7,0.7) -- cycle;
			\node[black] at (1.15,1.55) {\scriptsize $\set{X}$};
			\draw[->,black] (2,1.35) -- +(1.,0.) node[below] {\scriptsize $v_1\!\ge\! v_2$};
			\fill[black!70] (2.3,1.35) circle (3pt);
			\draw[->,black] (1.55,1.95) -- +(0.3,0.9) node[left] {\scriptsize $v_2\!\ge\! v_1$};
			\fill[black!70] (1.65,2.25) circle (3pt);
		\end{tikzpicture}
		\caption{$\set{X}$ is standard comonotone}
	\end{subfigure}\hfill
	\begin{subfigure}[t]{0.32\textwidth}
		\centering
		\begin{tikzpicture}[scale=1,>=Stealth,baseline]
			\draw[->] (0,0) -- (3,0) node[below] {$x_1$};
			\draw[->] (0,0) -- (0,3) node[left] {$x_2$};
			\draw[dashed,gray,-] (0,0) -- (2.7,2.7);
			
			\def\myshape{plot [smooth cycle, tension=0.8] coordinates {
					(1.0, 0.6)   %
					(2, 1.4)   %
					(2.6, 1.0)   %
					(2.2, 0.5)   %
					(1.4, 0.5)   %
			}}
			
			\fill[black!15, opacity=0.7] \myshape;
			\draw[black!60] \myshape;
			
			\node[black] at (1.75, 0.85) {\scriptsize $\mathcal{X}$};
			
		\end{tikzpicture}

		\caption{$\set{X}\subseteq\set{Z}(1,2)$}
	\end{subfigure}\hfill
	\begin{subfigure}[t]{0.32\textwidth}
		\centering
		
		\begin{tikzpicture}[scale=1,>=Stealth,baseline]
			
			\draw[->] (0,0) -- (3,0) node[below] {$x_1$};
			\draw[->] (0,0) -- (0,3) node[left] {$x_2$};
			
			\draw[dashed,gray,-] (0,0) -- (2.7,2.7)
			node[above, black] {\scriptsize $x_1=x_2$};
			\begin{scope}[shift={(-0.3,-0.3)}, scale=0.85]
				\def\myshape{plot [smooth cycle, tension=0.7] coordinates {
						(0.6, 1.2)
						(1.1, 2.5)
						(2.1, 2.6)
						(1.5, 1.7)
				}}
				
				\fill[black!15, opacity=0.7] \myshape;
				\draw[black!60] \myshape;
				
				\node[black] at (1.35, 2.0) {\scriptsize $\mathcal{X}$};
				
			\end{scope}
			
		\end{tikzpicture}
		\caption{$\set{X}\subseteq\set{Z}(2,1)$}
	\end{subfigure}
	\caption{Examples of comonotone sets in $\R^2$}\label{fig:2D-comonotone-sets} 
\end{figure}
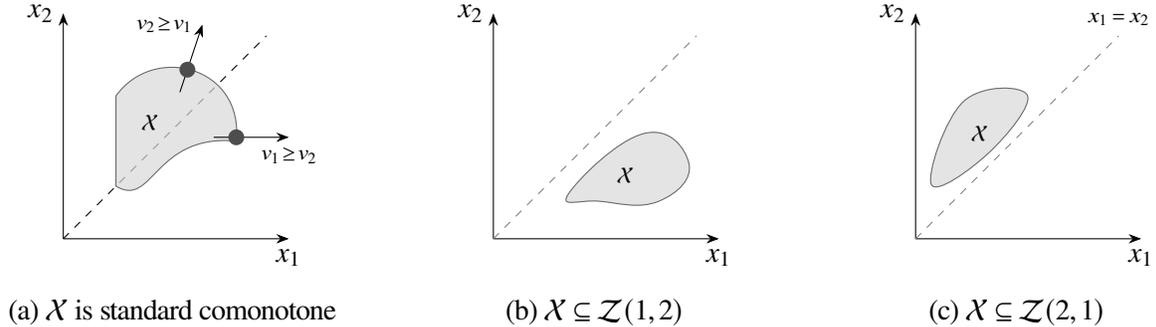 

\subsection{Properties of standard \comonotone sets}
In this subsection, we study properties of standard \comonotone sets.  
We begin with the planar case to build geometric intuition. While restricted to $n=2$, it provides a clear lens for understanding the restrictions imposed by standard comonotonicity, or equivalently, the structural information it carries. As shown below, standard comonotonicity in $\R^2$ can be characterized using only two directions, akin to checking extreme rays in polyhedral theory.
\begin{proposition}\label{prop:2D-finite-num-LP}
	Let $\set{X}\subseteq\R^2$ be compact. Then $\set{X}$ is  standard \comonotone  if and only if, for all $v\in\{e,-e\}$ and $\pi\in\Pi_2=\{ (1,2), (2,1) \}$,  the problem $\max\{ v^\top x:\, x\in\set{X}\}$ admits an optimal solution $\bar x\in\Z(\pi)$.
\end{proposition}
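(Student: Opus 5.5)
The forward direction is immediate from Definition~\ref{def:map}. Both $e$ and $-e$ have equal entries, so each lies in $\Z(1,2)\cap\Z(2,1)$. Hence, if $\set{X}$ is standard \comonotone, then for each $v\in\{e,-e\}$ and each $\pi\in\Pi_2$, the problem $\max\{v^\top x:\,x\in\set{X}\}$ (which attains its optimum by compactness) has an optimal solution in $\Z(\pi)$.

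For the converse, I will fix $\pi=(1,2)$ and $v\in\Z(1,2)$, i.e.\ $v_1\ge v_2$; the case $\pi=(2,1)$ is symmetric after swapping coordinates. The key step is to decompose
\[
v=\alpha e+\beta(1,-1)^\top,\qquad \alpha=\tfrac{v_1+v_2}{2},\quad \beta=\tfrac{v_1-v_2}{2}\ge 0,
\]
so that $v^\top x=\alpha(x_1+x_2)+\beta(x_1-x_2)$. The proof then splits into two cases.

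\emph{Case $\beta=0$.} Here $v=\alpha e$. If $\alpha>0$, I use the hypothesis for $e$; if $\alpha<0$, I use it for $-e$. Either way the hypothesized solution $\bar x\in\Z(1,2)$ is optimal for $v$. If $\alpha=0$, every feasible point is optimal, and it suffices that $\set{X}\cap\Z(1,2)\neq\emptyset$, which the hypothesis with $v=e$ guarantees.

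\emph{Case $\beta>0$.} This is the main step. I will show that every optimal solution (one exists by compactness) lies in $\Z(1,2)$. Suppose first $\alpha\ge 0$, and let $y\in\Z(1,2)$ be the hypothesized maximizer of $e^\top x$ over $\set{X}$. Suppose some $x^*\in\set{X}$ had $x^*_1<x^*_2$. Then
\[
v^\top x^*=\alpha(x^*_1+x^*_2)+\beta(x^*_1-x^*_2)<\alpha(x^*_1+x^*_2)\le \alpha(y_1+y_2)\le \alpha(y_1+y_2)+\beta(y_1-y_2)=v^\top y .
\]
The first inequality uses $\beta>0$ and $x^*_1<x^*_2$. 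The second uses $\alpha\ge0$ and the maximality of $y_1+y_2$. The third uses $y_1\ge y_2$. So $x^*$ is not optimal. If instead $\alpha<0$, I take $y\in\Z(1,2)$ maximizing $-e^\top x$, so $y_1+y_2$ is minimal over $\set{X}$. Then $\alpha(y_1+y_2)\ge\alpha(x^*_1+x^*_2)$ and the same chain goes through.

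I expect the main subtlety to be this strict-versus-weak inequality bookkeeping, and the degenerate case $v=0$. Otherwise the decomposition of $v$ into the ``sum'' direction $e$ and the ``difference'' direction $(1,-1)^\top$ carries the argument.
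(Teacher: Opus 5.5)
Your proof is correct and follows essentially the same route as the paper: the paper also compares $v$ with $\bar v=\frac{v_1+v_2}{2}e$ and runs the chain $v^\top\tilde x\le \bar v^\top\tilde x\le\bar v^\top\bar x\le v^\top\bar x$, which is exactly your decomposition $v=\alpha e+\beta(1,-1)^\top$. Your strict-inequality version gives the slightly stronger conclusion that every maximizer is sorted when $v_1>v_2$. Your explicit case analysis on the sign of $\alpha$ (including $\alpha=0$ and $v_1=v_2$) fills in the reduction from $\alpha e$ to $\pm e$ that the paper leaves implicit.
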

\begin{pf}
	The necessity  follows directly from \Cref{def:map} and the fact that $\{e,-e\}\subseteq\Z(1,2)\cap\Z(2,1)$.
	To prove sufficiency, we may assume without loss of generality
	that $v_1> v_2$ (the case $v_1<v_2$ is symmetric after swapping the two coordinates). It remains to prove that there exists an optimal solution  $\bar x\in\argmax\{v^\top x:\,x\in\set{X}  \}$ such that $\bar x_1\ge\bar x_2$. 
	
	Take any $\tilde x\in\argmax\{ v^\top x: \,x\in\set{X} \}$. Then if $\tilde x_1\ge \tilde x_2$,  we are done. Otherwise, assume $\tilde x_1<\tilde x_2$ and let $\bar v=\frac{v_1+v_2}{2}e$.  By the given precondition, $\bar v\in\Z(1,2)$ implies that there exists $\bar x\in\argmax\{ \bar v^\top x:\, x\in\set{X} \}$ with $\bar x_1\ge\bar x_2$. Next, we show the chain of inequalities 
	\begin{equation}\label{eq:2D-finite-num-LP}
		v^\top\tilde{x}\le \bar v^\top\tilde x\le \bar v^\top\bar x\le v^\top\bar x.
	\end{equation} 
	Indeed, the first inequality follows from
	\[ v^\top\tilde{x}- \bar v^\top\tilde x=\frac{1}{2}(v_1-v_2)(\tilde x_1-\tilde x_2)\le0, \]
	the second from $\bar x$ is optimal for the linear objective $\bar v$, and the third from
	\[ \bar v^\top\bar x - v^\top\bar x =-\frac{1}{2}(v_1-v_2)(\bar x_1-\bar x_2)\le 0. \]
	Because $\tilde x$ is maximal for the linear objective $v$, \eqref{eq:2D-finite-num-LP} implies that $\bar x$ is also an optimal solution for $v$. The conclusion follows from $\bar x_1\ge\bar x_2$. 
\end{pf}

We next give an example illustrating Proposition~\ref{prop:2D-finite-num-LP}.
\begin{example} Consider the sets shown in Figure~\ref{fig:standard-2D-comonotone-sets}. For the linear objective $x_1+x_2$, $x^1$ and $x^2$ are maximizers over these sets, where $x^1 \in \mathcal{Z}(2,1)$ and $x^2 \in \mathcal{Z}(1,2)$; $x^3$ and $x^4$ are minimizers over theses sets, where $x^3\in\set{Z}(2,1)$ and $x^4\in\set{Z}(1,2)$. By \Cref{prop:2D-finite-num-LP}, all the three sets are  standard comonotone. In fact, any set $\mathcal{X}$ satisfying $\{x^1,x^2,x^3,x^4\}\subseteq \mathcal{X}$ and contained in the strip delimited by the two dotted lines must be standard comonotone. \hfill\Halmos 
	\begin{figure}[h]
		\centering
		\begin{subfigure}[t]{0.32\textwidth}
			\centering
			\begin{tikzpicture}[scale=1.,>=Stealth,baseline]
				\draw[->] (0,0) -- (3,0) node[below] {$x_1$};
				\draw[->] (0,0) -- (0,3) node[left] {$x_2$};
				\draw[dashed,gray,-] (0,0) -- (2.7,2.7) node[above right] { };
				
				\draw[thick, dotted,gray,-] (3,1.) -- (1.,3) node[above right] {};
				\draw[thick, dotted,gray,-] (0,1.5) -- (1.5,0) node[above right] {};
				\fill[black!15, opacity=0.7] (1.5,2.5) .. controls (3,1) and (1,0.5) .. (3,1)
				.. controls (2.5,0.4) and (1.0,0.3) .. (1,0.5).. controls (2.,2) and (1.0,0.6) .. (0.3,1.2) -- cycle;
				\draw[black!60] (1.5,2.5) .. controls (3,1) and (1,0.5) .. (3,1)
				.. controls (2.5,0.4) and (1.0,0.3) .. (1,0.5).. controls (2.,2) and (1.0,0.6) .. (0.3,1.2) -- cycle;
				\node[black!70] at (1.15,1.55) {\scriptsize $\set{X}_1$};
				\fill[black] (1.5,2.5) circle (2pt) node [above] {$x^1$};
				\fill[black] (3,1) circle (2pt) node [above] {$x^2$};
				\fill[black] (0.3,1.2) circle (2pt) node [above] {$x^3$};
				\fill[black] (1.,0.5) circle (2pt) node [below] {$x^4$};
			\end{tikzpicture}
		\end{subfigure}\hfill
		\begin{subfigure}[t]{0.32\textwidth}			
			\centering
			\begin{tikzpicture}[scale=1.,>=Stealth,baseline]
				\draw[->] (0,0) -- (3,0) node[below] {$x_1$};
				\draw[->] (0,0) -- (0,3) node[left] {$x_2$};
				\draw[dashed,gray,-] (0,0) -- (2.7,2.7) node[above] {};
				\draw[thick, dotted,gray,-] (3,1.) -- (1.,3) node[above right] {};
				\draw[thick, dotted,gray,-] (0,1.5) -- (1.5,0) node[above right] {};
				
				\fill[black!15, opacity=0.7] (1.5,2.5) -- (3,1)
				.. controls (2.5,0.4) and (1.0,0.3) .. (1,0.5)-- (0.3,1.2) -- cycle;
				\draw[black!60] (1.5,2.5) -- (3,1)
				.. controls (2.5,0.4) and (1.0,0.3) .. (1,0.5)-- (0.3,1.2) -- cycle;
				\node[black!70] at (1.15,1.55) {\scriptsize $\set{X}_2$};
				\fill[black] (1.5,2.5) circle (2pt) node [above] {$x^1$};
				\fill[black] (3,1) circle (2pt) node [above] {$x^2$};
				\fill[black] (0.3,1.2) circle (2pt) node [above] {$x^3$};
				\fill[black] (1.,0.5) circle (2pt) node [below] {$x^4$};
			\end{tikzpicture}
		\end{subfigure}\hfill
		\begin{subfigure}[t]{0.32\textwidth}			
			\centering
			\begin{tikzpicture}[scale=1.,>=Stealth,baseline]
				\draw[->] (0,0) -- (3,0) node[below] {$x_1$};
				\draw[->] (0,0) -- (0,3) node[left] {$x_2$};
				\draw[dashed,gray,-] (0,0) -- (2.7,2.7) node[above, black] {\scriptsize $x_1=x_2$};
				
				\draw[thick, dotted,gray,-] (3,1.) -- (1.,3) node[above right] {};
				\draw[thick, dotted,gray,-] (0,1.5) -- (1.5,0) node[above right] {};
				\fill[black!15, opacity=0.7] (1.,3) -- (1.5,2.5) .. controls (1.5,1.5) and (1.5,1.5) .. (2.5,1.5) -- (3,1)
				.. controls (1.,1.) and (1.4,0.2) .. (1.2,0.3) -- (1,0.5).. controls (1.2,1.2) and (1.2,1.2) .. (0.5,1) -- (0.3,1.2) .. controls (0.2,1.4) and (1.,1.) .. cycle;
				\draw[black!60] (1.,3) -- (1.5,2.5) .. controls (1.5,1.5) and (1.5,1.5) .. (2.5,1.5) -- (3,1)
				.. controls (1.,1.) and (1.4,0.2) .. (1.2,0.3) -- (1,0.5).. controls (1.2,1.2) and (1.2,1.2) .. (0.5,1) -- (0.3,1.2) .. controls (0.2,1.4) and (1.,1.) .. cycle;
				\node[black!70] at (1.15,1.55) {\scriptsize $\set{X}_3$};
				\fill[black] (1.5,2.5) circle (2pt) node [above] {$x^1$};
				\fill[black] (3,1) circle (2pt) node [above] {$x^2$};
				\fill[black] (0.3,1.2) circle (2pt) node [above] {$x^3$};
				\fill[black] (1.,0.5) circle (2pt) node [below] {$x^4$};
			\end{tikzpicture}
		\end{subfigure}
		\caption{Examples of standard comonotone sets in $\R^2$}\label{fig:standard-2D-comonotone-sets}
	\end{figure}
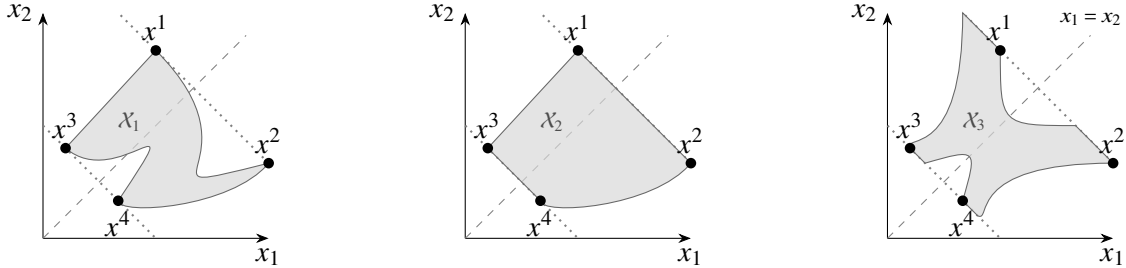
\end{example}

In $\mathbb{R}^2$, ordering information is encoded by only two
directions ($x_1+x_2$ or $-x_1-x_2$) as indicated by \Cref{prop:2D-finite-num-LP}. Unfortunately, the simplification does not directly extend to high dimensions. To see this, consider the distributed lattice $\set{X}=\{x\in\{0,1\}^3:\;x_1\ge x_2\ge x_3\}$. On the one hand, $0\in\argmax\{-e^\top x:\;x\in\set{X}  \}$ and $e\in\argmax\{e^\top x:\;x\in\set{X}  \}$, and moreover $\{0,e\}\subseteq\Z(\pi)$ for all $\pi\in\Pi_n$. On the other hand, for $v=(-2,3,-1)$, the corresponding linear maximization problem has the unique solution $x=(1,1,0)$, whose coordinate order is not compatible with that of $v$.  

The geometric intuition from the planar case facilitates the study of standard comonotonicity in higher dimensions. We first introduce a technical lemma for later use. 

\begin{lemma}\label{lem:unique-dense}
	Let $\X\subseteq\R^n$ be compact. Then for any $\pi\in\Pi_n$ and $v\in\Z(\pi)$, there exists
	a sequence $\{v^k\}\subseteq\Int\Z(\pi)$ such that (i)~$\lim\limits_{k\to\infty}v^k=v$ and (ii)~for every $k$, the problem $\max\left\{(v^k)^\top x:\ x\in\X\right\}$ admits a unique optimal solution.
\end{lemma}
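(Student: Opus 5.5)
The plan is to use the support function $\sigma(v)\defeq\max\{v^\top x:\,x\in\X\}$, which is finite and well defined for every $v\in\R^n$ because $\X$ is compact and nonempty. It is convex and finite on all of $\R^n$. The key classical fact is that $\max\{v^\top x: x\in\X\}$ has a unique optimal solution exactly when $\sigma$ is differentiable at $v$. Indeed, $\partial\sigma(v)=\conv\left(\argmax\{v^\top x:\,x\in\X\}\right)$, a compact convex set. This set is a singleton if and only if the argmax set is a singleton, since the convex hull of a set with two distinct points contains a segment. Differentiability of a convex function at $v$ is equivalent to its subdifferential being a singleton.

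By Rademacher's theorem, or more elementarily by the theorem that a finite convex function on $\R^n$ is differentiable almost everywhere (Rockafellar, Theorem 25.5), the set $N\subseteq\R^n$ of points where $\sigma$ fails to be differentiable has Lebesgue measure zero. Hence its complement $D=\R^n\setminus N$ is dense in $\R^n$. In fact $D$ is dense in every nonempty open set, because an open set of positive measure cannot be contained in a null set.

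Next I need to approximate $v\in\Z(\pi)$ from inside $\Int\Z(\pi)$. The cone $\Z(\pi)$ is a closed convex cone with nonempty interior: it contains $w$ with $w_{\pi(i)}=n-i$, which satisfies all defining inequalities strictly. Since $\Z(\pi)$ is convex with nonempty interior, $\Z(\pi)=\cl\Int\Z(\pi)$. Concretely, $v+\tfrac1k w\in\Int\Z(\pi)$ for every $k$, since strict plus weak inequalities give strict inequalities.

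For each $k$, the set $B(v+\tfrac1k w,\tfrac1k)\cap\Int\Z(\pi)$ is a nonempty open set, so it contains some $v^k\in D$. Then $v^k\in\Int\Z(\pi)$ and $\|v^k-v\|\le \tfrac{1}{k}(\|w\|+1)\to0$. Moreover, $\max\{(v^k)^\top x:x\in\X\}$ has a unique maximizer because $\sigma$ is differentiable at $v^k$. This proves (i) and (ii).

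The only step needing care is the identification of uniqueness of the maximizer with differentiability of $\sigma$, specifically the formula $\partial\sigma(v)=\conv\argmax$ for the support function of a compact set (Danskin's theorem). I will cite it rather than reprove it. Everything else is routine density and measure-zero reasoning.
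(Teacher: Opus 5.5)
Your proposal is correct and follows essentially the same route as the paper. Both arguments identify uniqueness of the maximizer with differentiability of the support function (its subdifferential is the closed convex hull of the maximizer set), invoke almost-everywhere differentiability of finite convex functions (Rockafellar, Theorem~25.5), and then use density of the differentiability set $\set{D}$ inside $\Int\Z(\pi)$. The only cosmetic difference is that you construct the approximating sequence explicitly via $v+\tfrac{1}{k}w$ and small balls, whereas the paper argues through the identity $\cl(\Int\Z(\pi)\cap\set{D})=\Z(\pi)$.
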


\begin{pf}
	Let $\delta^*(u|\set{X}) \defeq \max\left\{u^\top x:\,x\in\set{X}\right\}$ be the \emph{support function} of $\X$, and $\set{D}$ be the set of points where $\delta^*(\cdot|\set{X})$ is differentiable. By \cite[Theorem~25.1]{rockafellar1970convex}, $\delta^*(\cdot|\set{X})$ is differentiable at $u$ if and only if the subdifferential $\partial\delta^*(u|\set{X})$ is a singleton. Moreover,  according to \cite[Corollary 23.5.3]{rockafellar1970convex}, we get
	\[\partial\delta^*(u|\set{X}) = \cl\conv\left(\argmax\left\{ u^\top x:\,{x \in \X}\right\}\right).\]
	Because a set is a singleton if and only if its closed convex hull is a singleton, it follows that that $\set{D}$ is exactly the set of points $u$ where $\{\max u^\top x:\, x\in\set{X}\}$  admits a unique optimal solution.
	
	Since $\X$ is compact, $\delta^*(\cdot|\set{X})$ is finite-valued and convex on $\R^n$. Hence, by \cite[Theorem 25.5]{rockafellar1970convex}, the function $\delta^*(u|\set{X})$ is  differentiable almost everywhere, that is, the complement of $\set{D}$ is a set of measure zero. In particular, $\Int\Z(\pi)\setminus\set{D}$ has also measure zero. This implies $\cl\left(\Int\Z(\pi)\cap\set{D}\right)=\cl\left(\Int\Z(\pi)\right)=\Z(\pi)$. Consequently, for any $v\in\Z(\pi)$, there exists a sequence $\{v^k\}\subseteq\Int\Z(\pi)\cap\set{D}$ such that $\lim\limits_{k\to\infty}v^k=v$. The conclusion follows.
\end{pf}

\Cref{thm:comonotone-symmetry} below characterizes standard comonotonicity in $\R^n$ and generalizes Proposition~\ref{prop:2D-finite-num-LP}. 

\begin{theorem}\label{thm:comonotone-symmetry}
	Given a compact set $\set{X}\subseteq\R^n$, the following statements are equivalent:
	\begin{enumerate}[(i)]
		\item The set $\set{X}$ is standard comonotone.
		\item For all $v\in\R^n$ and all $i,j\in[n]$, there exists a certain  $x^*\in\argmax\{ v^\top x:x\in\set{X} \}$ such that
		\begin{equation}\label{eq:comonotone-relation}
			(v_i-v_j)(x^*_i-x^*_j)\ge0.
		\end{equation}
		\item For all $v\in\R^n$, relation~\eqref{eq:comonotone-relation} holds for all $x^*\in\argmax\{ v^\top x:x\in\set{X} \}$ and all $i,j\in[n]$. 
		\item For all $v\in\R^n$ and all $i,j\in[n]$ with $v_i=v_j$, there exist $\bar x,\tilde{x}\in\argmax\{ v^\top x:x\in\set{X} \}$ such that $\bar x_i\ge\bar x_j$ and $\tilde{x}_i\le\tilde{x}_j$.
	\end{enumerate}
\end{theorem}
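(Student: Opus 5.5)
The plan is to prove the cycle (i)$\Rightarrow$(iv)$\Rightarrow$(iii)$\Rightarrow$(ii)$\Rightarrow$(i). Throughout, compactness of $\set{X}$ guarantees that every $\argmax\{v^\top x:\,x\in\set{X}\}$ is nonempty. The substantive steps are (iv)$\Rightarrow$(iii), which is a $2$-coordinate version of the averaging trick in the proof of \Cref{prop:2D-finite-num-LP}, and (ii)$\Rightarrow$(i), which uses \Cref{lem:unique-dense} together with a limiting argument.

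\emph{(i)$\Rightarrow$(iv).} Let $v\in\R^n$ and $v_i=v_j$. Choose $\pi\in\Pi_n$ with $v\in\Z(\pi)$ in which $i$ precedes $j$; this is possible because the tie allows either order. By standard comonotonicity there is an optimal $\bar x\in\Z(\pi)$, so $\bar x_i\ge\bar x_j$. Swapping the positions of $i$ and $j$ in $\pi$ gives $\pi'$ with $v\in\Z(\pi')$ still. The corresponding optimal $\tilde x\in\Z(\pi')$ then satisfies $\tilde x_i\le\tilde x_j$.

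\emph{(iv)$\Rightarrow$(iii).} Fix $v$, a pair $i,j$ with $v_i>v_j$, and an arbitrary $x^*\in\argmax$. Suppose for contradiction that $x^*_i<x^*_j$. Let $u$ agree with $v$ except that $u_i=u_j=m\defeq(v_i+v_j)/2$. By (iv) there is $\bar x\in\argmax\{u^\top x:\,x\in\set{X}\}$ with $\bar x_i\ge\bar x_j$. As in \eqref{eq:2D-finite-num-LP},
\[ v^\top x^*=u^\top x^*+\tfrac12(v_i-v_j)(x^*_i-x^*_j)<u^\top x^*\le u^\top\bar x\le u^\top\bar x+\tfrac12(v_i-v_j)(\bar x_i-\bar x_j)=v^\top\bar x. \]
The first inequality is strict because $v_i-v_j>0$ and $x^*_i-x^*_j<0$. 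This contradicts the optimality of $x^*$ for $v$. When $v_i=v_j$ the relation \eqref{eq:comonotone-relation} is trivial, and the case $v_i<v_j$ follows by relabeling $i$ and $j$.

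\emph{(iii)$\Rightarrow$(ii)} is immediate, since $\argmax\neq\emptyset$.

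\emph{(ii)$\Rightarrow$(i).} This is the step needing care, because (ii) provides a possibly different optimizer for each pair $(i,j)$, whereas (i) requires a single optimizer compatible with all pairs simultaneously. Fix $\pi$ and $v\in\Z(\pi)$. By \Cref{lem:unique-dense} there are $v^k\in\Int\Z(\pi)$ with $v^k\to v$ such that each problem with objective $v^k$ has a unique optimizer $x^k$. Since the optimizer is unique, the optimizers furnished by (ii) for the different pairs all coincide with $x^k$. Moreover $v^k_{\pi(1)}>\cdots>v^k_{\pi(n)}$ strictly, because $v^k\in\Int\Z(\pi)$. Hence \eqref{eq:comonotone-relation} gives $x^k_{\pi(l)}\ge x^k_{\pi(l+1)}$ for every $l$, that is, $x^k\in\Z(\pi)$.

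By compactness, a subsequence of $\{x^k\}$ converges to some $\bar x\in\set{X}$. Since $\Z(\pi)$ is closed, $\bar x\in\Z(\pi)$. Optimality passes to the limit: for every $x\in\set{X}$ we have $(v^k)^\top x^k\ge (v^k)^\top x$, and letting $k\to\infty$ yields $v^\top\bar x\ge v^\top x$. Thus $\bar x\in\argmax\{v^\top x:\,x\in\set{X}\}\cap\Z(\pi)$, which is exactly (i) with $\permMap$ the identity.
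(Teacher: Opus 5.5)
Your proposal is correct and follows the paper's proof essentially step by step: the same cycle $(i)\Rightarrow(iv)\Rightarrow(iii)\Rightarrow(ii)\Rightarrow(i)$, the same averaging of $v_i,v_j$ for $(iv)\Rightarrow(iii)$, and the same use of \Cref{lem:unique-dense} with uniqueness and a compactness limit for $(ii)\Rightarrow(i)$. The only cosmetic difference is that you argue $(iv)\Rightarrow(iii)$ by contradiction through a strict inequality chain, whereas the paper sums the two optimality inequalities directly to get $x^*_i-x^*_j\ge\bar x_i-\bar x_j\ge 0$.
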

\begin{pf} We prove the equivalence following the cycle $(i)\Rightarrow (iv)\Rightarrow (iii)\Rightarrow (ii)\Rightarrow (i)$.
	
	$(i)\Rightarrow(iv)$:
	Since $v_i=v_j$, there exist two permutations $\bar{\pi}$ and $\tilde{\pi}$ such that $v\in\set{Z}(\bar{\pi})\cap\set{Z}(\tilde{\pi})$, where the index $i$ precedes $j$ in $\bar{\pi}$, and $j$ precedes $i$ in $\tilde{\pi}$. From the definition of standard comonotonicity, there exist optimal solutions $\bar{x}\in\set{Z}(\bar{\pi})$ and $\tilde{x}\in\set{Z}(\tilde{\pi})$ to the problem $\max_{x\in\set{X}}v^\top x$. It follows that $\bar x_i\ge\bar x_j$ and $\tilde{x}_i\le\tilde{x}_j$.
	
	$(iv) \Rightarrow (iii)$:
	For any $v \in \R^n$, let $x^* \in \argmax\{ v^\top x:x\in\set{X} \}$ and take arbitrary but fixed $i,j\in[n]$. Because relation~\eqref{eq:comonotone-relation} holds trivially when $v_i = v_j$, we assume WLOG that $v_i > v_j$. We then construct a new objective vector $\bar{v} \in \R^n$ by 
	\begin{equation*}
		\bar{v}_k = 
		\begin{cases} 
			\dfrac{v_i+v_j}{2}, & \text{if } k \in \{i,j\}, \\ 
			v_k, & \text{otherwise}. 
		\end{cases}
	\end{equation*}
	Since $\bar{v}_i = \bar{v}_j$, condition $(iv)$ implies that there exists a $\bar{x} \in \argmax\{\bar{v}^\top x: x\in\set{X}\}$ such that $\bar{x}_i \ge \bar{x}_j$.
	
	By optimality, we have $v^\top x^* \ge v^\top \bar{x}$ and $\bar{v}^\top \bar{x} \ge \bar{v}^\top x^*$. Summing these two inequalities gives $(v - \bar{v})^\top x^* \ge (v - \bar{v})^\top \bar{x}, $
	which rearranges to 
	\begin{align*}
		0\le& (v - \bar{v})^\top (x^* - \bar{x}) 
		= \frac{v_i-v_j}{2} (x^*_i - \bar{x}_i) - \frac{v_i-v_j}{2} (x^*_j - \bar{x}_j)
		=\frac{1}{2} (v_i-v_j)\left[(x^*_i - x^*_j) - (\bar{x}_i - \bar{x}_j) \right].
	\end{align*}
	Since $v_i > v_j$, we further obtain $(x^*_i - x^*_j)-(\bar{x}_i - \bar{x}_j)\ge0$.
	Together with $\bar{x}_i - \bar{x}_j\ge0$, we have that $x^*_i - x^*_j \ge 0$. Consequently, $(v_i-v_j)(x^*_i-x^*_j) \ge 0$, which proves \eqref{eq:comonotone-relation}.
	
	$(iii)\Rightarrow(ii)$: This is immediate.
	
	$(ii)\Rightarrow(i)$: For any fixed $\pi\in\Pi_n$ and $v\in\set{Z}(\pi)$, we take the sequence $\{v^k \}$ in \Cref{lem:unique-dense}. Then for any $v^k$, $v^k\in\Int\Z(\pi)$ implies $v^k_{\pi(i)}-v^k_{\pi(j)}>0$ for all $i<j$. Moreover, since the problem $\max\left\{ (v^k)^\top x:\,x\in\set{X} \right\}$ has a unique maximizer, which we denote by $x^k$, one can deduce from (ii) that $x^k_{\pi(i)}\ge x^k_{\pi(j)}$ for all $i<j$. Namely, one has $x^k\in\set{Z}(\pi)$ for all $k$. Because $\set{X}$ is a compact set, the sequence $\{x^k\}$ has a clustering point. WLOG, we assume $\lim\limits_{k\to\infty} x^k=x^*$; otherwise we apply the arguments to its convergent subsequence. Because $\Z(\pi)$ is closed, it follows that $x^*\in\Z(\pi)$. Moreover, from the optimality of each $x^k$ we have   
	\[(v^k)^\top x^k\geq (v^k)^\top x,\qquad \forall x\in\set{X}.\] Taking the limit $k\to \infty$ on both sides leads to
	\[v^\top x^*\geq v^\top x,\qquad \forall x\in\set{X},\]
	implying $x^*\in\argmax\{v^\top x:\,x\in\set{X}\}$. This proves (i) and completes the proof.
\end{pf}

Theorem~\ref{thm:comonotone-symmetry} decomposes global comonotonicity into local pairwise ordering interactions. The complementary roles of parts~(ii) and (iv) are worth noting. Since \eqref{eq:comonotone-relation} holds trivially when $v_i=v_j$, part~(ii) captures only the strict comparison $v_i>v_j$, ensuring that the directional preference in $v$ is faithfully transmitted to the primal solution $x$. In contrast, part~(iv) does \emph{not} impose any strict ordering condition over $v$. It dictates that if $i$ and $j$ are equivalent in the dual space of cost vectors ($v_i=v_j$), then they must remain structurally indistinguishable in the primal space of $x$ as well. This requirement embodies a \emph{symmetry via conjugacy}, which becomes even more transparent for convex standard comonotone sets as we discuss next.

Since standard comonotonicity is defined through linear maximization, it is natural to ask whether it is preserved under convexification. Proposition~\ref{prop:cvx-hull} answers this question in the affirmative. 
\begin{proposition}\label{prop:cvx-hull}
	Let $\set{X}\subseteq\R^n$ be compact. Then $\set{X}$ is standard comonotone if and only if $\conv(\set{X})$ is standard comonotone.
\end{proposition}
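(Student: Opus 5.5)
Both directions will go through the pairwise characterization in \Cref{thm:comonotone-symmetry}, applied once to $\set{X}$ and once to $\conv(\set{X})$; the latter is compact since $\set{X}$ is compact in $\R^n$. The key fact linking the two is that for every $v\in\R^n$,
\[
\max_{x\in\set{X}}v^\top x=\max_{x\in\conv(\set{X})}v^\top x,\qquad \argmax_{x\in\conv(\set{X})}v^\top x=\conv\Big(\argmax_{x\in\set{X}}v^\top x\Big).
\]
The first equality holds because a linear function attains the same supremum on a set and on its convex hull. For the second, write a maximizer $y$ over $\conv(\set{X})$ as a convex combination of points of $\set{X}$; since none of these points can exceed the optimal value and their average attains it, every point with positive weight is itself optimal. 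The reverse inclusion is immediate. In particular, $\argmax_{\set{X}}\subseteq\argmax_{\conv(\set{X})}$, and every maximizer over $\conv(\set{X})$ is a convex combination of maximizers over $\set{X}$.

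\emph{($\Rightarrow$)} Suppose $\set{X}$ is standard comonotone. By \Cref{thm:comonotone-symmetry}(iii), every $x^*\in\argmax_{\set{X}}v^\top x$ satisfies $(v_i-v_j)(x^*_i-x^*_j)\ge0$ for all $i,j$. For fixed $v,i,j$, the map $x\mapsto (v_i-v_j)(x_i-x_j)$ is linear, so the inequality survives convex combinations. Hence it holds for every maximizer over $\conv(\set{X})$, which is condition (iii) for $\conv(\set{X})$. Applying \Cref{thm:comonotone-symmetry} to the compact set $\conv(\set{X})$ then shows that $\conv(\set{X})$ is standard comonotone.

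\emph{($\Leftarrow$)} Suppose $\conv(\set{X})$ is standard comonotone. By \Cref{thm:comonotone-symmetry}(iii) applied to $\conv(\set{X})$, the relation \eqref{eq:comonotone-relation} holds for every maximizer over $\conv(\set{X})$. Since $\argmax_{\set{X}}v^\top x\subseteq\argmax_{\conv(\set{X})}v^\top x$ and this set is nonempty by compactness, every maximizer over $\set{X}$ satisfies \eqref{eq:comonotone-relation}. This is condition (iii) (and hence (ii)) for $\set{X}$, and the theorem gives standard comonotonicity of $\set{X}$.

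The only step that needs care is the description of $\argmax$ over the convex hull. Because $\conv(\set{X})$ of a compact set in $\R^n$ is itself compact, no closure operation is needed, so Carathéodory's representation suffices. Everything else is direct transport of the linear pairwise inequalities, which is exactly why I would route the argument through \Cref{thm:comonotone-symmetry} rather than through the cone-based \Cref{def:map}. Working with the definition directly would require showing that convex combinations of points in $\Z(\pi)$ stay in $\Z(\pi)$, which is true since $\Z(\pi)$ is a convex cone, but it would also require handling the existence of suitable maximizers by hand, whereas condition (iii) quantifies over all maximizers and avoids that.
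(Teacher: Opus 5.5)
Your argument is correct, but it is organized differently from the paper's.

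\textbf{The paper's route.} The forward direction follows directly from \Cref{def:map}: every maximizer over $\set{X}$ is also a maximizer over $\conv(\set{X})$, so the witness in $\Z(\pi)$ for $\set{X}$ serves for the hull. For the converse the paper repeats the perturbation argument. \Cref{lem:unique-dense} gives $v^k\in\Int\Z(\pi)$ with $v^k\to v$ and a unique maximizer over $\conv(\set{X})$. Uniqueness forces this maximizer to lie in $\set{X}$, standard comonotonicity of the hull forces it into $\Z(\pi)$, and compactness lets one pass to the limit.

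\textbf{Your route.} You send both directions through \Cref{thm:comonotone-symmetry}. The key observation is that condition (iii) quantifies over all maximizers, so it is inherited by any nonempty subset of the optimal set; this makes the converse immediate. The perturbation and limit argument has not disappeared, though. It sits inside the step (ii)$\Rightarrow$(i) of that theorem. You also rely on the exchange argument of (iv)$\Rightarrow$(iii), which the paper's proof does not need. Given the theorem, your route is shorter and more modular; the paper's is more self-contained.

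\textbf{Two small remarks.}
First, in the forward direction, your identity describing the optimal set over $\conv(\set{X})$ as the convex hull of the optimal set over $\set{X}$ is correct but unnecessary. Condition (ii) is existential, so it transfers from $\set{X}$ to $\conv(\set{X})$ by the inclusion of optimal sets alone.
Second, your closing comment overstates the difficulty of working with \Cref{def:map} directly. That route is trivial for the forward direction. Only the converse needs an extra device such as the uniqueness trick, because a maximizer over the hull lying in $\Z(\pi)$ need not belong to $\set{X}$.
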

\begin{pf}
	Assume first that $\set{X}$ is standard comonotone. Because 
	\begin{equation}\label{eq:sol-inclusion}
		\emptyset\neq\argmax\left\{ v^\top x:\,x\in\set{X}\right\}\subseteq \argmax\left\{ v^\top x:\,x\in\conv(\set{X})\right\},\qquad\forall v\in\R^n,
	\end{equation}
	it follows immediately from \Cref{def:map} that $\conv(\set{X})$ is standard comonotone.
	
	Conversely, suppose $\conv(\set{X})$ is standard comonotone. For any given $\pi\in\Pi_n$ and $v\in\Z(\pi)$, let $\{v^k\}$ be the sequence in \Cref{lem:unique-dense} converging to $v$. Because each cost vector $v^k$ admits a unique maximizer $x^k$ over $\conv(\set{X})$, such a solution $x^k$ must also be optimal over $\set{X}$ by \eqref{eq:sol-inclusion}. Applying the same limiting arguments as in the proof of \Cref{thm:comonotone-symmetry}, one can obtain a solution $x^*\in\Z(\pi)\cap\argmax\{ v^\top x:\,x\in\set{X} \}$. In turn, we deduce from \eqref{eq:sol-inclusion} that  $x^*\in\Z(\pi)\cap\argmax\{ v^\top x:\,x\in\conv(\set{X}) \}$. This proves the standard comonotonicity of $\conv(\set{X})$.
\end{pf}

Figure~\ref{fig:standard-2D-comonotone-sets} illustrates Proposition~\ref{prop:cvx-hull}, where $\set{X}_2=\conv(\set{X}_1)$ and both sets are standard comonotone. Building on the characterization in Theorem~\ref{thm:comonotone-symmetry}, we now specialize to the convex setting.
\begin{theorem}\label{thm:comonotone-convex-symmetry}
	Given a compact convex set $\set{X}$, the following statements are equivalent.
	\begin{enumerate}[(i)]
		\item The set $\set{X}$ is standard \comonotone.
		\item For all $v\in\R^n$ and all $i,j\in[n]$ with $v_i=v_j$, there exists a certain $x^*\in\argmax\{v^\top x:\;x\in\set{X}\}$ such that $x^*_i=x^*_j$.
		\item For all $v\in\R^n$, there exists a certain $x^*\in\argmax\{ v^\top x:\; x\in\set{X} \}$ such that $v_i=v_j$ implies $x_i^*=x_j^*$ for all $i,j\in[n]$.
	\end{enumerate}
\end{theorem}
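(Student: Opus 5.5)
We prove the cycle (i)$\Rightarrow$(iii)$\Rightarrow$(ii)$\Rightarrow$(i). Throughout, write $F(v)\defeq\argmax\{v^\top x:\,x\in\set{X}\}$. Since $\set{X}$ is compact and convex, $F(v)$ is a nonempty compact convex face of $\set{X}$.

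\emph{(i)$\Rightarrow$(iii).} Fix $v$ and let $P$ be the partition of $[n]$ into classes on which $v$ is constant. Consider the set $\Pi_v$ of permutations $\pi$ with $v\in\Z(\pi)$. These are exactly the permutations that list the classes of $P$ in decreasing order of value, with an arbitrary order inside each class. By (i), for each $\pi\in\Pi_v$ there is $x^\pi\in F(v)\cap\Z(\pi)$. We then average over the whole group $G$ of permutations that act within classes: fix one $\pi_0\in\Pi_v$ and set $x^*=\frac{1}{|G|}\sum_{\sigma\in G} x^{\sigma\circ\pi_0}$. By convexity of $F(v)$, $x^*\in F(v)$.

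The key step is to show that $x^*_i=x^*_j$ whenever $i,j$ lie in the same class. This is the delicate point, because the individual $x^\pi$ are not related to one another by permutation: $\set{X}$ need not be symmetric. So the plain group-average trick does not directly give equality of coordinates. Instead I would run the argument within the face $F(v)$. Using Theorem~\ref{thm:comonotone-symmetry}(iv), for each pair $i,j$ in a common class there exist $\bar x,\tilde x\in F(v)$ with $\bar x_i\ge\bar x_j$ and $\tilde x_i\le\tilde x_j$. Since $F(v)$ is convex, the segment joining them contains a point with $x_i=x_j$; this already proves (ii) directly from (i).

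To get (iii), all equalities must hold simultaneously. I would proceed by induction on the number of equalities imposed. Suppose $C\subseteq F(v)$ is the nonempty convex compact set of maximizers that already satisfy some of the equalities, and let $i,j$ be the next pair. We need points in $C$ with $x_i\ge x_j$ and with $x_i\le x_j$. To obtain them, perturb $v$ to $v'=v+\epsilon w$, where $w$ puts a small strict order on the class containing $i,j$ but keeps the already-equated indices tied. For such $v'$, standard comonotonicity (through Theorem~\ref{thm:comonotone-symmetry}(iii), which applies to \emph{all} maximizers) forces every maximizer of $v'$ to respect the prescribed order. Taking $\epsilon\to0$ and a limit of maximizers gives points of $F(v)$ with the desired order, and the inductive equalities can be maintained by applying the inductive hypothesis to the tied perturbed objective. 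This induction step is where I expect the main difficulty. An alternative route is to minimize a strictly convex symmetric function such as $\|x\|^2$ over $F(v)$ and show, via (iii) of Theorem~\ref{thm:comonotone-symmetry} and an exchange argument, that the unique minimizer has equal coordinates on each class. I would try this route first, as it avoids the induction.

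\emph{(iii)$\Rightarrow$(ii)} is immediate. \emph{(ii)$\Rightarrow$(i).} Verify condition (iv) of Theorem~\ref{thm:comonotone-symmetry}: if $v_i=v_j$, the point $x^*$ given by (ii) satisfies both $x^*_i\ge x^*_j$ and $x^*_i\le x^*_j$, so one may take $\bar x=\tilde x=x^*$. Theorem~\ref{thm:comonotone-symmetry} then yields (i).
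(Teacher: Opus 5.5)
Your (iii)$\Rightarrow$(ii), your (ii)$\Rightarrow$(i) via condition (iv) of Theorem~\ref{thm:comonotone-symmetry} with $\bar x=\tilde x=x^*$, and your segment proof of (i)$\Rightarrow$(ii) are all correct. Together, though, they only give (i)$\Leftrightarrow$(ii). The substance of the theorem is the simultaneity in (iii), and you leave that open.

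\emph{The induction route.} As written, your induction is on the number of equalities imposed inside the face $F(v)$ of a \emph{fixed} $v$ (the set $C\subseteq F(v)$). A hypothesis of that form says nothing about the perturbed objective $v'=v+\epsilon w$, and $v'$ is exactly where you invoke it. The step closes once the statement being inducted on is quantified over all objectives:
\begin{quote}
$S(m)$: for every $u\in\R^n$ and every set $E$ of at most $m$ pairs tied in $u$, some $x\in F(u)$ satisfies $x_a=x_b$ for all $\{a,b\}\in E$.
\end{quote}
To pass from $E$ to $E\cup\{\{i,j\}\}$, there is nothing to do if $j$ is already linked to $i$ through $E$. Otherwise:
\begin{itemize}
\item Let $w$ be the indicator of the $E$-component of $i$. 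Then every pair in $E$ stays tied in $v\pm\frac1k w$, so $S(m)$ gives maximizers of $v\pm\frac1k w$ satisfying $E$.
\item Theorem~\ref{thm:comonotone-symmetry}(iii) forces $x_i\ge x_j$ for the $+$ perturbation (resp.\ $x_i\le x_j$ for the $-$ one).
\item Subsequential limits lie in the convex set $C=\{x\in F(v): x_a=x_b\ \forall\{a,b\}\in E\}$ with these inequalities preserved, so some point of the segment between them satisfies $x_i=x_j$.
\end{itemize}
So the difficulty you anticipate is not real, and this is a genuinely different route from the paper's. The paper inducts downward on the number $p(v)$ of untied pairs. It perturbs one-sidedly by $v+\frac1k e^i$ for every $i$ in a tie class $\alpha$ and recombines the $|\alpha|$ limits with weights $\lambda_i\propto\prod_{j\neq i}t_j$. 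Your two-sided perturbation needs only two limit points and a segment.

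\emph{The min-norm route.} The route you say you would try first has a real gap as stated.
\begin{itemize}
\item For a tied pair $v_i-v_j=0$, so Theorem~\ref{thm:comonotone-symmetry}(iii) applied to $v$ gives no information about the min-norm point $x^*$ of $F(v)$.
\item For nonpolyhedral $\set{X}$, a maximizer of a secondary objective over the face $F(v)$ need not maximize any perturbation $v+\epsilon w$ over $\set{X}$.
\item Since $\set{X}$ is not symmetric, no coordinate swap is available for an exchange argument.
\end{itemize}
It can be repaired with the same limiting device. Limits of maximizers of $v+\frac1k w$ with $w_i=w_j$ are maximizers of $w$ over $F(v)$. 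Hence $F(v)$ inherits condition (iv) of Theorem~\ref{thm:comonotone-symmetry} for every tied pair. The (iv)$\Rightarrow$(iii) argument of that theorem works pair by pair, so every maximizer $y$ of $w$ over $F(v)$ with $w_i>w_j$ has $y_i\ge y_j$. Apply this to $w=-x^*$, which $x^*$ maximizes over $F(v)$: if $x^*_i<x^*_j$ then $w_i>w_j$, a contradiction, and symmetrically, so $x^*_i=x^*_j$.
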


\begin{pf}
	The implication $(iii)\Rightarrow(ii)$ is trivial. The direction $(ii)\Rightarrow (i)$ follows directly from \Cref{thm:comonotone-symmetry}. To prove the equivalence, it remains to prove $(i)\Rightarrow(iii)$.
	
	Suppose $\set{X}$ is standard comonotone. For any $v\in\R^n$, define $p(v)$ as the number of distinct pairs $(v_i,v_j)$ of entries of $v$, namely, 
	\[ p(v)\defeq\sum_{i=1}^n\sum_{j:j>i}\mathbb{I}\{v_i\neq v_j\}, \]
	where $\mathbb{I}\{v_i\neq v_j\}=1$ if $v_i\neq v_j$ and $0$ otherwise. We prove $(iii)$ by induction on $p(v)$. 
	In the base case where  $p(v)=n(n-1)/2$, one has $v_i\neq v_j$ for all $i\neq j\in[n]$ and thus, $(iii)$ holds trivially for such $v$.
	
	Fix $q\in\mathbb{Z}_{++}$ and assume $(iii)$ holds for all $v\in\R^n$ satisfying $n(n-1)/2\ge p(v)\ge q$. We aim to show that $(iii)$ also holds for every $v$ with $p(v)= q-1$. If  no such $v$ exists, then the claim follows. Otherwise, we consider any fixed $v$ with $p(v)=q-1$, and choose  $\alpha\subseteq[n]$ such that $v_i=v_j$ for all distinct $i, j\in\alpha$ while $v_i\neq v_j$ for all $i\in\alpha$ and $j\notin\alpha$. 
	
	For each $i\in\alpha$, define a sequence $v^{i,k}=v+\frac{1}{k}e^i$ for $k\in\mathbb{Z}_{++}$. Then it follows that $p({v}^{i,k})\ge q$ for all sufficiently large $k\in\mathbb{Z}_{++}$. By the induction hypothesis, for each $i\in\alpha$ and sufficiently large $k$, there exists  ${x}^{i,k}\in\argmax\{ ({v}^{i,k})^\top x:\,x\in\set{X} \}$ such that $(iii)$ holds true for the pair $(v^{i,k},x^{i,k})$. Particularly, since ${v}^{i,k}_{i'}=v_{i'}$ for all $i'\in[n]\backslash\{i\}$, we have for sufficiently large $k$ that \[
	v_{i'}=v_{j'}\ \Longrightarrow\ x^{i,k}_{i'}=x^{i,k}_{j'}\qquad
	\forall\, i\in\alpha,\ \forall\, i',j'\in[n]\setminus\{i\}.
	\]  Moreover, since $\set{X}$ is compact and $\lim\limits_{k\to\infty}v^{i,k}=v$ for all $i\in\alpha$, we may assume WLOG (passing to a convergent subsequence if necessary) that 
	\[ \lim_{k\to\infty}{x}^{i,k}=\bar{x}^i,\qquad \forall i\in\alpha.\]
	Then by construction, the limits $\{\bar{x}^i\}_{i\in\alpha}$ preserve the following ordering properties:
	\begin{enumerate}[(a)]
		\item $\bar{x}^{i}\in\argmax\left\{ v^\top x:\,x\in\set{X} \right\}$ for all $i\in\alpha$.
		\item  $v_{i'}=v_{j'}$ implies $\bar{x}^i_{i'}=\bar{x}^i_{j'}$ for all $i\in\alpha$ and $i',j'\in[n]\backslash\{i\}$.
		\item $\bar{x}^i_i\ge \bar{x}^i_{i'}$ for all $i\in\alpha$ and $i'\in\alpha\backslash\{i\}$. Indeed, comonotonicity of $\set{X}$ and $v^{i,k}_i> v^{i,k}_{i'}$ imply $\bar{x}^{i,k}_i\ge \bar{x}^{i,k}_{i'}$ by \Cref{thm:comonotone-symmetry} for each $k$. Then it follows by letting $k\to\infty$.
	\end{enumerate}
	
	Finally, we construct an optimal solution $x^*$ for $v$ from $\{ \bar{x}^i\}_{i\in\alpha}$ and verify that it satisfies $(iii)$.
	By property (b)-(c), let us first rewrite each $\bar{x}^i=\tilde{x}^i+t_i e^i$, where $t_i=\bar{x}_i^i-\bar{x}^i_{i'}\ge 0$ for a fixed $i'\in\alpha\backslash\{i\}$. Also, by construction, each $\tilde{x}^i$ satisfies the equalities:
	\begin{equation}\label{eq:implication-property}
		x_{i'}=x_{j'},\qquad\forall i',j'\in[n] \text{ s.t. }v_{i'}=v_{j'}.
	\end{equation}  
	If there exists $i\in\alpha$ such that $t_i=0$, then $\bar{x}^i=\tilde{x}^i$ satisfies $(iii)$ by property~(a) and \eqref{eq:implication-property}. This completes the inductive step. Hence, we may assume $t_i>0$ for all $i\in\alpha$ and define 
	\[ \lambda_i = \dfrac{\prod\limits_{j\in\alpha\backslash\{i\}}t_j}{\sum\limits_{i'\in\alpha}\prod\limits_{j\in\alpha\backslash\{i'\}}t_j}, \qquad\forall i\in\alpha.\]
	Note that $\sum_{i\in\alpha}\lambda_i=1$ and $\lambda>0$. Then we set
	\[x^*=\sum_{i\in\alpha}\lambda_i\bar{x}^i=\sum_{i\in\alpha}\lambda_i\tilde{x}^i+\dfrac{\prod\limits_{j\in\alpha}t_j}{\sum\limits_{i'\in\alpha}\prod\limits_{j\in\alpha\backslash\{i'\}}t_j}\left(\sum_{i\in\alpha}e^i \right).\]
	Since $\set{X}$ is convex, the set $\argmax\{ v^\top x:\,x\in\set{X} \}$ is also convex. Thus by property~(a), the convex combination above yields $x^*\in\argmax\{ v^\top x:\,x\in\set{X} \}$ . In addition,  each $\tilde{x}^i$ and the vector $\sum_{i\in\alpha}e^i$ satisfy \eqref{eq:implication-property}, and therefore so does their linear combination $x^*$. This shows that $(iii)$ holds for $v$ at $x^*$, completing the induction and the proof.
\end{pf}

Theorem~3 makes the symmetry viewpoint via conjugacy from Theorem~\ref{thm:comonotone-symmetry} particularly sharp in the convex setting: standard comonotonicity is fully characterized by the implication $v_i=v_j\Rightarrow x^*_i=x^*_j\;\forall i,j\in[n]$.

An extreme subclass of standard comonotone sets comprises permutation-invariant sets. Specifically, following \cite{kim2022convexification}, a set $\set{X}\subseteq\R^n$ is called \emph{permutation-invariant} if $x\in\set{X}$ implies $x(\pi)\in\set{X}$ for every $\pi\in\Pi_n$. Such sets naturally satisfy the symmetry requirement in Theorem~\ref{thm:comonotone-symmetry}(iv). Moreover, permutation invariance yields standard comonotonicity directly from Definition~\ref{def:map} and Lemma~\ref{lem:rearrangement-ineq}, since any attained maximizer can be permuted within $\set{X}$ to match the ordering of $v$ without decreasing the objective value. Figure~\ref{fig:standard-2D-comonotone-sets} depicts a permutation-invariant set $\set{X}_3$.
\begin{proposition}\label{prop:invariant}
	Every permutation-invariant set $\X$ is standard \comonotone.
\end{proposition}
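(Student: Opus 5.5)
The plan is to verify Definition~\ref{def:map} directly with $\permMap$ equal to the identity. The key tool is the rearrangement inequality (Lemma~\ref{lem:rearrangement-ineq}), together with closure of $\X$ under permuting coordinates. No compactness is needed. The definition only asks for something when $\max_{x\in\X}v^\top x$ attains its optimum, and this is exactly the situation we will work with.

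First, fix an arbitrary $\pi\in\Pi_n$ and a cost vector $v\in\Z(\pi)$, and suppose the linear program attains its optimum at some $\tilde x\in\X$. Next, form the permuted vector $x^*\defeq\tilde x(\pi)$, which by construction lies in $\Z(\pi)$. By permutation invariance of $\X$, we have $x^*\in\X$, so $x^*$ is feasible.

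It then remains to show that $x^*$ is optimal. Since $v\in\Z(\pi)$, the vector $v$ is already sorted by $\pi$, so $v(\pi)=v$. Applying Lemma~\ref{lem:rearrangement-ineq} to the pair $(v,\tilde x)$ gives
\[ v^\top x^*=[v(\pi)]^\top \tilde x(\pi)\ge v^\top\tilde x. \]
Because $\tilde x$ is a maximizer, equality must hold, so $x^*\in\argmax\{v^\top x:\,x\in\X\}\cap\Z(\pi)$. Since $\pi$ and $v$ were arbitrary, the identity mapping is an admissible permutation mapping, and $\X$ is standard \comonotone.

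The only subtle point is a notational one rather than a substantive obstacle. We must make sure that $v(\pi)=v$ when $v\in\Z(\pi)$, including when $v$ has ties. This holds because $v(\pi)$ places the sorted entries of $v$ in decreasing order along the positions $\pi(1),\dots,\pi(n)$, and $v$ already has that arrangement, since tied values are interchangeable. We also need the convention that $x(\pi)$ is a coordinate permutation of $x$, so that permutation invariance applies to it.
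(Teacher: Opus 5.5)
Your proposal is correct and is essentially the paper's argument: the paper justifies this proposition in one sentence, saying that an attained maximizer can be permuted within $\mathcal{X}$ to match the ordering of $v$ without decreasing the objective, by Lemma~\ref{lem:rearrangement-ineq}. Your write-up makes that sentence precise, including the checks that $v(\pi)=v$ for $v\in\mathcal{Z}(\pi)$ and that no compactness is needed.
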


We use Example~\ref{ex:ellipsoid} below to illustrate the results established in this section, which shows that for centered ellipsoids, comonotonicity and permutation invariance are equivalent.
\begin{example}\label{ex:ellipsoid}
	Let $\set{X}=\{ x:x^\top Q x\le 1 \}$, where $Q\in\R^{n\times n}$ is symmetric positive definite. We aim to show that if $\set{X}$ is \comonotone, then it is permutation-invariant. 
	
	Indeed, because $0$ lies in the interior of $\set{X}$,  one has $\Int\set{Z}(\pi)\cap\set{X}\neq\emptyset$ for each $\pi\in\Pi_n$.  Consequently, if $\set{X}$ is \comonotone under a mapping $\permMap:\Pi_n\to\Pi_n$, then $\permMap$ must be surjective. 
	By \Cref{thm:standard-comonotone}, it follows that $\set{X}$ is standard comonotone.   Now consider any $v\in\R^n$ and $\bar x\in\argmax\{ v^\top x:\, x\in\set{X} \}$.  Direct calculation yields that $\bar x=\frac{1}{\lambda} Q^{-1} v$ or equivalently $v=\lambda Q\bar x$, where $\lambda = \sqrt{v^\top Q^{-1} v}$. Combining above and invoking \Cref{thm:comonotone-convex-symmetry}~(ii), $\set X$ is \comonotone if and only if 
	\[ v_i=v_j\Rightarrow (Q^{-1}v)_i=(Q^{-1}v)_j,\quad\forall i,j\in[n],\;\forall v\in\R^n. \]
	Expanding this condition implies that for all $v\in\R^n$ and $i,j\in[n]$ with $v_i=v_j=t$,
	\[ [(Q^{-1})_{ii}+(Q^{-1})_{ij}]t+\hspace{-1em}\sum_{k\in[n]\setminus\{i.j\}}\hspace{-0.7em}(Q^{-1})_{ik}v_k= [(Q^{-1})_{ij}+(Q^{-1})_{jj}]t+\hspace{-1em}\sum_{k\in[n]\setminus\{i,j\}}\hspace{-0.7em}(Q^{-1})_{jk}v_k.\]
	Since $t$ and $\{v_k\}_{k\neq i,j}$ are arbitrary, we must have $(Q^{-1})_{ii}=(Q^{-1})_{jj}$, $(Q^{-1})_{ik}=Q^{-1}_{jk}$ for all $i,j\in[n]$ and $k\in[n]\setminus \{i,j\}$.  Thus all diagonal entries of $Q^{-1}$ are equal and all off-diagonal entries of $Q^{-1}$ are equal. 
	Equivalently, $Q^{-1}$ (and hence $Q$) is invariant under coordinate permutations, which implies that $\set{X}$ is permutation-invariant.\hfill\Halmos
\end{example}

\subsection{Non-standard \comonotone sets and matroids}
Moving beyond standard \comonotone sets, general comonotonicity can capture combinatorial structures that lack full symmetry. 
An important source of non-standard comonotone sets is  structure closely related to greedy algorithms.   Matroids provide a canonical instance of this kind.
Let $\set{M}=([n], \mathcal{B})$ be a matroid over $[n]$ with  a collection of bases $\mathcal{B}\subseteq 2^{[n]}$ \cite{whitney1935abstract}. For a set $S\subseteq[n]$, its incidence vector $x\in \{0,1\}^n$ is defined as $x_i=1$ if $i\in S$ and $0$ otherwise for all $i\in [n]$. In this paper, we identify $\set{M}$ with the corresponding set of incidence vectors $\X \subseteq \{0,1\}^n$. We also treat checking whether $S\in\set{B}$ or $S\in\set{M}$ as a constant-time operation.
\begin{proposition}\label{prop:matroid-comonotone}
	\label{lem:greedy}
	If $\X\subseteq\{0,1\}^n$ encodes a matroid or the set of bases of a matroid, then $\set{X}$ is \comonotone. Moreover, the associated permutation mapping $\permMap$ can be  accessed in $\O(n\log n)$ time.
\end{proposition}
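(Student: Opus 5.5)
The plan is to use the greedy algorithm to construct $\permMap$ explicitly. Fix $\pi\in\Pi_n$ and let $v\in\Z(\pi)$ be arbitrary. For the bases case, let $S_\pi$ be the greedy basis built by scanning $\pi(1),\pi(2),\dots,\pi(n)$ and adding $\pi(t)$ whenever the current set stays independent. For a matroid viewed as its independent sets, the objective may have negative entries, so we must also decide where to stop. Here I will scan in the order $\pi$ and add $\pi(t)$ whenever independence is preserved. The truncation issue is handled below.

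The classical greedy theorem (Rado--Edmonds) says the greedy basis maximizes $v^\top x$ over the bases whenever the weights are scanned in nonincreasing order. Since $v\in\Z(\pi)$, the order $\pi$ is such an order, even if $v$ has ties. Crucially, $S_\pi$ depends only on $\pi$ and not on $v$. Given $S_\pi$, define
\[ \permMap(\pi)=\bigl(\text{elements of }S_\pi\text{ in }\pi\text{-order},\ \text{elements of }[n]\setminus S_\pi\text{ in }\pi\text{-order}\bigr). \]
Its incidence vector $x^*$ has ones followed by zeros along $\permMap(\pi)$, so $x^*\in\Z(\permMap(\pi))$, and $x^*$ is optimal for every $v\in\Z(\pi)$. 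This proves comonotonicity for the set of bases.

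For independent sets, the optimal greedy solution keeps only the elements with $v_i>0$, which depends on $v$ and not just on $\pi$. This is the main obstacle. I plan to handle it as follows. For $v\in\Z(\pi)$, let $m$ be the number of positive entries, so that these are $\pi(1),\dots,\pi(m)$. The optimal solution is $T_m:=S_\pi\cap\{\pi(1),\dots,\pi(m)\}$, since greedy is optimal when run on the prefix of positive weights and the greedy prefix of $S_\pi$ coincides with the greedy set on that prefix. The incidence vector of $T_m$ is sorted by any permutation that lists the elements of $T_m$ before all others. The order $\permMap(\pi)$ defined above lists $S_\pi$ first in $\pi$-order, so $T_m$, an initial segment of $S_\pi$ in $\pi$-order, comes before every element outside it. Hence the incidence vector of $T_m$ lies in $\Z(\permMap(\pi))$ for every $m$, and the same $\permMap$ works for all $v\in\Z(\pi)$. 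Ties, zero weights and the case $m=0$ are covered because any prefix choice is optimal and the argument applies for each $m$.

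For complexity, I will state the standard fact that computing $\permMap(\pi)$ requires one greedy pass with $n$ independence checks, each of constant time by the paper's convention, followed by a stable partition of $\pi$. Sorting or reading $\pi$ costs at most $\O(n\log n)$, for example when $\pi$ must be obtained by sorting a cost vector. The total is therefore $\O(n\log n)$.
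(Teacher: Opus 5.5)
Your proposal is correct and follows essentially the same route as the paper: the best-in greedy solution depends only on $\pi$, $\Psi(\pi)$ lists the greedy basis first in $\pi$-order followed by the remaining elements, and for independent sets the optimal solution is the greedy basis truncated to the positive-weight prefix. You observe explicitly that this truncation is an initial segment of $S_\pi$ in $\pi$-order. That observation is what justifies the paper's one-line claim that the truncated vector is sorted by the same permutation, so your write-up is slightly more explicit on that point.
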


\begin{pf}
	Let $\set{M}=([n], \mathcal{B})$ be a matroid. We first assume $\mathcal{X}$ represents the bases $\mathcal{B}$. Consider any $\pi\in\Pi_n$ and any vector $v\in \set{Z}(\pi)$.
	For the problem $\max\{v^\top x:\, x\in\set{X} \}$, the best-in greedy algorithm scans elements in the order $\pi(1),\ldots,\pi(n)$ and
	selects an element whenever independence is preserved, i.e., it constructs a
	vector $x^*\in\{0,1\}^n$ by the recursion
	\begin{equation}\label{eq:best-in-greedy}
		x_{\pi(i)}^* = 1 \text{ if and only if } [x_{\pi(1)}^*, \cdots, x_{\pi(i-1)}^*, 1, 0, \cdots, 0 ]^{\top} \text{ represents an independence set in $\set{M}$}. 
	\end{equation}
	By \cite{edmonds1971matroids}, this procedure returns an optimal solution to
	$\max\{v^\top x:\,x\in\X\}$. 
	Importantly, the output $x^*$ depends solely on the feasibility check  of $\mathcal{M}$ and the permutation $\pi$, not on the specific values of $v$. Therefore, for all $v$ sorted by $\pi$, the greedy algorithm produces the same optimal solution $x^*$. This proves that a matroid is \comonotone. To be more specific, given the optimal solution $x^*$ for the natural order $\pi=(1,2,\dots,n)$, one can define $\sigma=\permMap(\pi)$ by requiring $\sigma(i)<\sigma(j)$ if either (1)~$x^*_{\sigma(i)}=1$ and $x^*_{\sigma(j)}=0$, or (2)~$x^*_{\sigma(i)}=x^*_{\sigma(j)}$ and $i<j$. Analogously, $\permMap(\pi)$ can be determined for an arbitrary permutation $\pi$ other than the natural one.
	
	We next consider the case where $\set X$ represents the matroid $\set{M}$, i.e., the incidence
	vectors of independent sets. Let $x^*$ be the vector given by \eqref{eq:best-in-greedy}. Then the solution obtained by setting $\bar x_i=x^*_i$ if $v_i>0$ and $x_i=0$ otherwise is optimal for $\max_{x\in\set{X}} v^\top x$ \cite{edmonds1971matroids}. Because $x^*$ and $\bar x$ are binary, it is clear that $x^*$ and $\bar x$ share the same order according to above modification. This implies that $\set X$ is \comonotone under $\permMap$.  In both cases, the time complexity is $\mathcal{O}(n \log n)$, dominated by the sorting step.
\end{pf}

The connection between greedy solvability and set structure has been extensively studied beyond matroids in literature. Some notable binary generalizations include {antimatroids} \cite{edelman1985theory}, and greedoids with strong exchange axiom and matroid embedding structures \cite{korte1984greedoids, helman1993exact}. In the continuous setting, Edmonds’ classical best-in greedy
algorithm extends to {polymatroids} and {base
	polyhedra} of submodular functions \cite{edmonds1970}.  In discrete convex analysis,  {M-convex sets} serve as the integral analogue of these structures \cite[Chapter~4]{murota2003}. One can readily verify the comonotonicity of above greedy-related families using the same reasoning as in \Cref{lem:greedy}; we do not repeat the details here. 

While comonotonicity exhibits rich structure and merits further study in its own right, we believe the current level of treatment is sufficient for this paper's purpose of understanding the tractability of \eqref{eq:general-concave}.

\section{Complexity analysis: A unified theoretical framework}\label{sec:poly}
In this section, we assume that the feasible set $\X$ of \eqref{eq:general-concave} is \comonotone. Leveraging this set property, we propose a general analysis framework that identifies a polynomial number of support candidates and guarantees that \eqref{eq:general-concave} has an optimal solution supported on one of them. 
Under \Cref{assume:supp}, \eqref{eq:general-concave} becomes tractable once the support is fixed; hence, enumerating the candidates and solving the corresponding subproblems yield a polynomial-time algorithm. Notably, our analysis applies to  an arbitrary \comonotone feasible region in \eqref{eq:general-concave}.

The proposed framework is developed in three steps as summarized in \Cref{fig1} and outlined below:
\begin{enumerate}[Step 1.]
	\item Leveraging the convexity of $f$, we show that \eqref{eq:general-concave} can be reduced to its \textit{linear counterpart} \eqref{eq:linear}, which maximizes a linear function $c^\top Ax$ over the original feasible set $\X$ for some $c\in \Re^r$. However, since $c$ relies on the optimal solution to \eqref{eq:general-concave} and  is not known a priori, we cannot solve \eqref{eq:linear} directly. 
	\item To circumvent this, we partition the parameter space  $\Re^r$ of $c$ into a polynomial number of regions based on optimality conditions of \eqref{eq:linear}. These conditions are derived from the comonotonicity structure of $\X$, and they ensure that all cost vectors within the same region share the same optimality certificate for the linear counterpart \eqref{eq:linear}.
	
	\item Finally, we show that each region created in Step~2 gives rise to a polynomial number of candidate supports. Enumerating them over all regions produces a polynomial-size list that is guaranteed to contain an optimal support for \eqref{eq:general-concave}.
\end{enumerate}
\begin{figure}[htb]
	\centering
	\begin{tikzpicture}[ every text node part/.style={align=center}]
		\node[state, rectangle, text width=5.8cm,minimum height=1cm] (l) at (-1.2,2) {1. Reduction to the linear counterpart \\(\Cref{prop:linear})};
		
		\node[state, rectangle, text width=2.8cm,minimum height=1cm] (c) at (4.5,2) {2. Space partition\\(\Cref{lem:HA})};
		
		\node[state, rectangle, text width=3.6cm,minimum height=1cm] (p) at (9.1,2) {3. Support construction
			(\Cref{them:supp})};
		\path (l) edge[line width=1.1] (c);
		\path (c) edge[line width=1.1] (p);
	\end{tikzpicture}
	\caption{A theoretical framework of complexity analysis.}
	\label{fig1}
\end{figure}
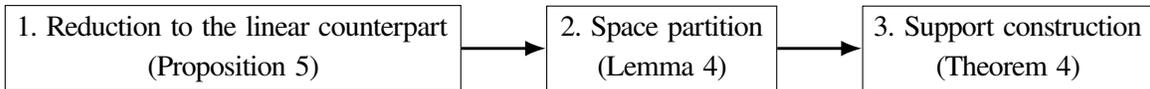

Beyond its broad applicability, the proposed framework is also highly adaptable. When additional problem structure is available (e.g., standard comonotonicity), each step can be tailored accordingly to tighten the complexity bounds.  To avoid redundancy, we do not repeat the full framework for special families of $\X$ in the subsequent sections; instead, we highlight only the necessary changes.

\subsection{Reduction to a linear program}
We first establish the reduction from \eqref{eq:general-concave} to its linear counterpart.
\begin{proposition}\label{prop:linear}
	For any feasible set $\X$,   there exists a vector $c\in \Re^r$ such that
	\begin{align}\label{eq:linear}
		\max_{x\in \X} c^{\top} A x
	\end{align}
	admits an optimal solution, and every optimal solution to the linear counterpart \eqref{eq:linear} is also optimal for the original problem \eqref{eq:general-concave}.
\end{proposition}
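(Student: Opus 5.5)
The plan is to pick $c$ as a subgradient of $f$ at the optimal image point and then use the subgradient inequality in two directions. By the standing assumption, \eqref{eq:general-concave} has a global optimizer $x^\star\in\X$; set $y^\star\defeq Ax^\star\in\R^r$. Since $f:\R^r\to\R$ is convex and finite-valued on all of $\R^r$, its subdifferential $\partial f(y^\star)$ is nonempty \cite[Theorem~23.4]{rockafellar1970convex}. I will take any $c\in\partial f(y^\star)$ and show that this $c$ has both required properties.

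The key inequality is the subgradient inequality at $y^\star$, applied with $y=Ax$:
\[
f(Ax)\ \ge\ f(y^\star)+c^\top(Ax-y^\star),\qquad\forall x\in\X .
\]
\emph{Attainment.} By optimality of $x^\star$ for \eqref{eq:general-concave}, $f(Ax)\le f(y^\star)$ for every $x\in\X$. Combined with the inequality above, this gives $c^\top Ax\le c^\top Ax^\star$ for all $x\in\X$. Hence $x^\star$ is optimal for \eqref{eq:linear}, so the linear counterpart attains its optimum, with optimal value $c^\top y^\star$.

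\emph{Transfer of optimality.} Now let $\bar x$ be any optimal solution of \eqref{eq:linear}. Then $c^\top A\bar x=c^\top y^\star$, and the subgradient inequality yields $f(A\bar x)\ge f(y^\star)$. Since $f(y^\star)$ is the optimal value of \eqref{eq:general-concave} and $\bar x\in\X$, we get $f(A\bar x)=f(y^\star)$, so $\bar x$ is optimal for \eqref{eq:general-concave}.

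I do not expect a real obstacle here. The only point that needs care is the existence of a subgradient, which relies on $f$ being finite and convex on the whole space $\R^r$. If $f$ were allowed to be extended-valued, one would need $y^\star$ to lie in the relative interior of $\operatorname{dom} f$, but that case is outside the stated setting. No structure of $\X$ (comonotonicity, compactness, convexity) is used, which matches the phrase ``for any feasible set $\X$'' in the statement.
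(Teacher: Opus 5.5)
Your proof is correct and follows the paper's argument: you pick a subgradient $c$ of $f$ at $Ax^\star$ for a global optimizer $x^\star$, then apply the subgradient inequality twice. The first application shows $x^\star$ solves the linear counterpart (the paper argues this by contradiction, you argue directly), and the second transfers optimality of any linear maximizer back to \eqref{eq:general-concave}. Your remark that $\partial f(Ax^\star)\neq\emptyset$ relies on $f$ being finite and convex on all of $\R^r$ makes explicit a point the paper leaves implicit.
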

\begin{pf}
	Let $\hat x\in \X$ be an optimal solution to  problem \eqref{eq:general-concave}. Pick any subgradient $\hat c\in \Re^r$ of $f(\cdot)$ at  $A\hat x$. 
	We first show that $\hat x$ is also optimal for \eqref{eq:linear} with the cost vector $\hat c$. Suppose for contradiction that there exists a solution $x^*\in \X$ such that 
	$$\hat c^{\top}Ax^* > \hat c^{\top}A \hat x.$$ 
	By the subgradient inequality of convex $f$,  we have
	\[f(Ax^*)\ge f(A\hat x) +\hat c^{\top}(Ax^* - A \hat x)>f(A\hat x), \]
	which contradicts the optimality of $\hat x$ in \eqref{eq:general-concave}. Hence, $\hat x$ must be optimal for \eqref{eq:linear} with $\hat c$.
	
	Next,  we show that any optimal solution of \eqref{eq:linear} with the vector $\hat c$ is also optimal for \eqref{eq:general-concave}. Let $\tilde x\in \X$ be an arbitrary optimal solution of \eqref{eq:linear} with $\hat c$, and thus $\hat c^{\top}A \tilde x = \hat c^{\top}A \hat x$ holds. Then, applying the subgradient inequality of $f$ again yields 
	\[f(A\tilde x)\ge f(A\hat x) +\hat c^{\top}(A \tilde x - A \hat x)=f(A\hat x), \]
	where the inequality must hold with equality because \eqref{eq:general-concave} attains the optimum at  $\hat x$. It follows that $\tilde x$ also attains the optimal value of \eqref{eq:general-concave}. We thus complete the proof.
\end{pf}

It is worth noting that \Cref{prop:linear} is stronger than the classical results in the literature (see \cite{bector1970some,bereanu1972quasi} and \cite[Theorem~32.3]{rockafellar1970convex}). Roughly speaking, the latter assert that $\argmax\{ c^\top Ax:x\in\set{X} \}\cap\argmax\{f(A^\top x):\,x\in\set{X}\}\neq\emptyset$ for a certain $c\in\R^r$. Consequently, even if such a $c$ is known, one still needs to identify, among the maximizers of the linear counterpart \eqref{eq:linear}, a  solution that also maximizes $f(A^\top x).$ To avoid such an essentially bilevel program, the literature often explicitly  or implicitly (\cite[e.g.,][]{klouda2015exact, scott2025normal, onn2003convex, del2023sparse}) postulates a nondegeneracy assumption to ensure that the linear counterpart has a unique solution.
In contrast, because \Cref{prop:linear} establishes the stronger inclusion $\argmax\{ c^\top Ax:x\in\set{X} \}\subseteq\argmax\{f(A^\top x):\,x\in\set{X}\}$, any optimizer of \eqref{eq:linear} is automatically optimal for \eqref{eq:general-concave}. This frees us from above recurring technical nuisance and removes the need for such nondegeneracy assumptions.

\subsection{Space partition}
Since the cost vector $c$ in \Cref{prop:linear} is typically unknown, we cannot directly work with the linear counterpart \eqref{eq:linear}. 
Fortunately, under comonotonicity of $\set{X}$, we can instead rely on the following optimality condition for \eqref{eq:linear}: for any $\pi\in\Pi_n$ and $c\in \Re^r$,
\begin{equation}\label{eq:optimality-general-comonotonicity}
	A^\top c\in\Z(\pi) \Rightarrow \argmax\{A^\top c:\,x\in\set{X}\}\cap \Z(\permMap(\pi))\neq\emptyset,
\end{equation}
provided that $\argmax\{A^\top c:\,x\in\set{X}\}\neq\emptyset$.  This condition naturally leads to a partition of the parameter space of $c\in\R^r$, as we explain next.

We first introduce a technique from combinatorial geometry-- \textit{hyperplane arrangement}. Hyperplane arrangement is a standard tool for space partition (e.g., \cite{del2023sparse,onn2003convex,onn2004convex}).
Specifically, let $\H=\{H_i\}_{i\in [p]}$ be a collection of $p$ hyperplanes in $\Re^q$, where each $H_i$ denotes a hyperplane. The family $\H$ partitions the space $\Re^q$ into a collection of \textit{cells} with dimensions ranging from $0$ to $q$. Each cell $R$ is a possibly nonclosed region of the form $R=\cap_{i\in [p]} \tilde{H}_i$, where $\tilde H_i\in \{H_i, H_i^{>}, H_i^{<}\}$ represents either the hyperplane $H_i$ itself or one of its two open half-spaces. In particular, we call $R$ a \textit{$\tau$-cell} if $\dim(R)\le \tau$. The collection of all such cells is called the \textit{arrangement} $\A(\H)$ of $\H$. For more details, we refer to \cite{edelsbrunner1986constructing,edelsbrunner1993zone} or standard monographs in combinatorial geometry (e.g., \cite{edelsbrunner1987algorithms}). \Cref{lem:HA-counting} below gives a standard counting bound for hyperplane arrangements.

\begin{lemma}[\cite{edelsbrunner1986constructing}]\label{lem:HA-counting}
	Let $\H$ be a finite set of $p$ hyperplanes in $\Re^q$. Then the arrangement $\A(\H)$ can be constructed in $\O(p^q)$ time and contains $\O(p^\tau)$ $\tau$-cells for each $\tau\in[q]$.
	Moreover, if all hyperplanes in $\H$ pass through the origin, then these two bounds tighten to $\O((p-1)^{q-1})$  and $\O((p-1)^{\tau-1})$, respectively.
\end{lemma}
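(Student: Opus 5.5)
The plan is to treat \Cref{lem:HA-counting} as the classical result of \cite{edelsbrunner1986constructing} and reconstruct its two parts separately: a counting bound by induction over intersection flats, and a construction bound by incremental insertion combined with the zone theorem. The central case will be deduced from the affine case.

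\emph{Counting.} For a cell $R$ of dimension $k$, let $L(R)$ be the affine hull of $R$. It equals the intersection of the hyperplanes of $\H$ that contain $R$, so it is cut out by at most $q-k$ of them in general position, or by more in degenerate cases. In either case $L(R)$ is one of at most $\sum_{j\le q-k}\binom{p}{j}=\O(p^{q-k})$ flats. Inside the $k$-flat $L(R)$, the remaining hyperplanes induce an arrangement of at most $p$ hyperplanes. Its full-dimensional regions number at most $\sum_{i\le k}\binom{p}{i}=\O(p^{k})$; this is the standard recursion $r_k(p)=r_k(p-1)+r_{k-1}(p-1)$, proved by inserting one hyperplane at a time and noting that it splits exactly the regions met by the $(k-1)$-dimensional arrangement it carries. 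The cell $R$ is such a region, so there are $\O(p^{q-k}\cdot p^{k})=\O(p^q)$ cells of each dimension $k$. Summing over $k\le\tau$ gives the count of $\tau$-cells.

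\emph{The obstacle: the exponent $\tau$.} I expect the main difficulty to be upgrading this to the stated bound $\O(p^\tau)$, and I do not think the statement as worded can be proved. A concrete check breaks it: take $p$ hyperplanes in general position in $\Re^q$ with $q\ge2$. They have $\binom{p}{q}=\Theta(p^q)$ vertices, which are $0$-dimensional and hence $\tau$-cells for every $\tau\in[q]$. For $\tau=1$ this contradicts an $\O(p)$ bound. The defensible forms are that the number of cells of each fixed dimension is $\O(p^q)$, and that the number of full-dimensional cells of the induced arrangement inside any fixed $\tau$-dimensional flat is $\O(p^\tau)$. I would state the lemma in one of these corrected forms and verify that later uses in \Cref{lem:HA} only need it, rather than claim the literal bound.

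\emph{Construction and the central case.} For the $\O(p^q)$ construction time, I would insert the hyperplanes one at a time, maintaining the face incidence lattice. By the zone theorem \cite{edelsbrunner1993zone}, the faces of the current arrangement met by a new hyperplane have total complexity $\O(p^{q-1})$, so each insertion costs $\O(p^{q-1})$ and all $p$ insertions cost $\O(p^q)$.

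For central arrangements, every hyperplane passes through the origin, so every cell other than $\{0\}$ is a relatively open cone. Fix $H_p=\{a^\top x=0\}$ and intersect with the parallel affine hyperplane $\{a^\top x=1\}$. Each cell lying in $H_p^{>}$ meets it in a cell of one lower dimension. The other $p-1$ hyperplanes meet it in at most $p-1$ affine hyperplanes of a $(q-1)$-dimensional space. Cells in $H_p^{<}$ are handled symmetrically, and cells inside $H_p$ are handled by recursing on the central arrangement that $H_p$ carries. Applying the affine bounds with $(p,q)$ replaced by $(p-1,q-1)$ yields the $\O((p-1)^{q-1})$ time bound. The cell count becomes $\O((p-1)^{q-1})$ per dimension, and $\O((p-1)^{\tau-1})$ regions inside a fixed $\tau$-dimensional linear subspace, subject to the same caveat about the literal reading.
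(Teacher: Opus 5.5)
Your diagnosis is correct, and it is the main point here. The paper gives no proof of \Cref{lem:HA-counting} beyond the citation, and its $\tau$-cell claims are false as written. Under the paper's definition ($R$ is a $\tau$-cell if $\dim(R)\le\tau$), your general-position example has $\binom{p}{q}=\Theta(p^q)$ vertices, all of them $1$-cells, which contradicts $\mathcal{O}(p)$ for $q\ge 2$. Reading ``$\tau$-cell'' as ``cell of dimension exactly $\tau$'' does not rescue it, since a simple arrangement has $\sum_{i=0}^{\tau}\binom{q-i}{\tau-i}\binom{p}{q-i}=\Theta(p^q)$ faces of every dimension $\tau$.

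Your caveat for the central case can be made just as concrete. Take $p\ge 2$ distinct lines through the origin in $\mathbb{R}^2$: they give $2p$ rays, hence $2p+1$ cells of dimension at most $1$, against the claimed $\mathcal{O}((p-1)^{0})=\mathcal{O}(1)$.

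What \cite{edelsbrunner1986constructing} actually supplies, for $q\ge 2$, is the total face bound $\mathcal{O}(p^q)$ and the $\mathcal{O}(p^q)$ incremental construction via the zone theorem. Your reconstruction of these true parts is correct and is the standard argument: counting through intersection flats, incremental insertion, and slicing a central arrangement with $\{a^\top x=1\}$ to pass from $(p,q)$ to $(p-1,q-1)$. One small point: your per-insertion cost $\mathcal{O}(p^{q-1})$ silently includes locating where the new hyperplane first meets the current arrangement. The reference does this within budget for $q\ge 2$, but for $q=1$ construction amounts to sorting, so the stated $\mathcal{O}(p)$ fails there.

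Your plan to check downstream uses works for \Cref{lem:HA}, which needs only the true central total count $\mathcal{O}((p-1)^{q-1})$. It does not cover the rest of the paper. Cases~2 and~3 in the proof of \Cref{thm:standard-comonotone-complexity} (and, through it, \Cref{prop:nonneg-standard-comonotone}) invoke the literal bound. For the central arrangement of $2n+1$ hyperplanes in $\mathbb{R}^{r+2}$, they assert $\mathcal{O}((2n)^{r})$ cells of dimension at most $r+1$ and $\mathcal{O}((2n)^{r-1})$ cells of dimension at most $r$.

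Neither of your corrected forms yields this. Form (b), applied inside each hyperplane, gives $\mathcal{O}(n^{r+1})$ such cells, and that is tight: for generic $A$ there are $\Omega(n^{r})$ Case-2 facets on each $H_i$, $i\in[n]$. With $\mathcal{O}(n)$ supports per cell, the naive count becomes $\mathcal{O}(n^{r+2})$. Restating the lemma correctly therefore leaves those support bounds needing their own argument, not an appeal to \Cref{lem:HA-counting}.
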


We now apply \Cref{lem:HA-counting} to the hyperplane arrangement induced by 
\begin{align}\label{eq:HA-pairwise}
	H_{ij} = \left\{c\in \Re^r:   \left(A^{\top}c\right)_i - \left(A^{\top}c\right)_j=0 \right\},\qquad \forall 1 \le i < j \le n.    
\end{align}
\begin{lemma}\label{lem:HA}
	The hyperplanes in \eqref{eq:HA-pairwise} partition $\R^r$ into $\O(n^{2r-2})$ cells. Moreover, for each cell $R$, there exists a  permutation $\pi\in\Pi_n$ such that $A^\top c\in\set{Z}(\pi)$ for all $c\in R$.
\end{lemma}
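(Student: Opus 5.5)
The plan is to apply \Cref{lem:HA-counting} to the family $\H=\{H_{ij}\}_{1\le i<j\le n}$ in \eqref{eq:HA-pairwise}, and then read off the ordering of $A^\top c$ from the sign pattern that defines each cell.

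\emph{Counting.} There are $p=\binom{n}{2}=\O(n^2)$ sets $H_{ij}$, and each passes through the origin since its defining equation $(a_i-a_j)^\top c=0$ is linear homogeneous. Here $a_i$ denotes the $i$-th column of $A$. One subtlety must be handled first. If $a_i=a_j$, then $H_{ij}=\R^r$ is not a genuine hyperplane. Such pairs can simply be discarded, because they impose no constraint and the equality $(A^\top c)_i=(A^\top c)_j$ holds identically. Duplicate hyperplanes, arising when $a_i-a_j$ and $a_k-a_l$ are parallel, can also be merged; this only decreases $p$. With $q=r$ and $\tau=r$, the origin-centered bound in \Cref{lem:HA-counting} gives $\O((p-1)^{r-1})=\O(n^{2(r-1)})=\O(n^{2r-2})$ cells.

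\emph{Constant ordering on each cell.} Fix a cell $R=\cap_{i<j}\tilde H_{ij}$ with $\tilde H_{ij}\in\{H_{ij},H_{ij}^{>},H_{ij}^{<}\}$, and add $\tilde H_{ij}=H_{ij}$ for the discarded degenerate pairs. On $R$, the sign of $(A^\top c)_i-(A^\top c)_j$ is therefore constant for every pair: it is $+$, $-$, or $0$. Define a relation on $[n]$ by $i\succeq j$ if this sign is $+$ or $0$. For any single $c\in R$, the relation coincides with comparing the real numbers $(A^\top c)_i$, so it is a total preorder: it is transitive and complete, and its transitivity is inherited from the reals. Any linear extension of this preorder, with ties broken arbitrarily (say by index), gives a permutation $\pi$ satisfying $(A^\top c)_{\pi(1)}\ge\cdots\ge(A^\top c)_{\pi(n)}$. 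Since the preorder is the same for all $c\in R$, we get $A^\top c\in\Z(\pi)$ for all $c\in R$.

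\emph{Main obstacle.} The only delicate point is bookkeeping in the counting step. Degenerate or coincident $H_{ij}$ must be handled as above, and we must make sure the exponent uses the central-arrangement refinement, $(p-1)^{r-1}$, rather than $p^r$. This is what yields $2r-2$ rather than $2r$. The ordering claim is routine once the constant-sign structure of cells is noted.
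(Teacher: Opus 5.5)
Your proposal is correct and follows essentially the same route as the paper. You apply the origin-centered bound of \Cref{lem:HA-counting} to the $\binom{n}{2}$ hyperplanes $H_{ij}$, and then read off a single ordering of $A^\top c$ from the constant sign pattern on each cell. Your extra handling of degenerate pairs with $a_i=a_j$ and your explicit total-preorder and linear-extension step make precise details that the paper leaves implicit; neither changes the argument.
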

\begin{pf}
	Let $\H=\{H_{ij}\}_{1\le i < j \le n}$ denote the hyperplanes given in~\eqref{eq:HA-pairwise}.  Since $|\H|=n(n-1)/2$ and all hyperplanes in $\H$ pass through the origin, \Cref{lem:HA-counting} implies that the number of cells is upper bounded by $\O(|\H|^{r-1})=\O(n^{2r-2})$.
	
	Each cell $R$ is characterized by $\bigcap_{1\le i<j\le n}\tilde H_{ij}$, where $\tilde H_{ij}\in\{H_{ij},\,H_{ij}^{>},\,H_{ij}^{<}\}$. By  construction, for any fixed  cell $R$ and any pair $i<j$, the sign of $(A^\top c)_i-(A^\top c)_j$ is constant over $c\in R$. This implies that the relative ordering between $(A^\top c)_i$ and $(A^\top c)_j$ is invariant within $R$. Taken over all pairs, these comparisons determine an overall ordering of $A^\top c$ throughout $R$. Hence, there exists a permutation $\pi\in\Pi_n$ such that $A^\top c\in \set{Z}(\pi)$ for all $c\in R$, completing the proof. 
\end{pf}

\subsection{Support construction}
Based on the cell decomposition in \Cref{lem:HA}, we now turn to support construction and deriving complexity results for \eqref{eq:general-concave}. 
We begin with showing that a fixed ordering can yield $\O(n^2)$ distinct supports.
\begin{lemma}\label{lem:number-per-order}
For any $\sigma\in\Pi_n$, the set $\{\supp(x): x\in \Z(\sigma)\}$ contains $\O(n^2)$ supports.
\end{lemma}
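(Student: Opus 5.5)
Fix $\sigma\in\Pi_n$ and take any $x\in\Z(\sigma)$, so that $x_{\sigma(1)}\ge x_{\sigma(2)}\ge\cdots\ge x_{\sigma(n)}$. The idea is that the entries of $x$, read along $\sigma$, split into three consecutive blocks: strictly positive entries, then zeros, then strictly negative entries. So the support is determined by where the zero block starts and where it ends, which gives the $\O(n^2)$ count.

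First, set $a\defeq|\{k:\,x_{\sigma(k)}>0\}|$ and $b\defeq|\{k:\,x_{\sigma(k)}<0\}|$. Monotonicity of $k\mapsto x_{\sigma(k)}$ gives three facts. The positive entries occupy exactly positions $1,\dots,a$: if $x_{\sigma(k)}>0$ then every earlier entry is at least as large, hence positive. The negative entries occupy exactly positions $n-b+1,\dots,n$, by the symmetric argument. All positions strictly between these two blocks carry zeros.

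Next, it follows that
\[
\supp(x)=\{\sigma(1),\dots,\sigma(a)\}\cup\{\sigma(n-b+1),\dots,\sigma(n)\}.
\]
Thus $\supp(x)$ depends only on the pair $(a,b)$, where $a,b\in\mathbb{Z}_+$ and $a+b\le n$.

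Finally, the number of admissible pairs $(a,b)$ is $(n+1)(n+2)/2=\O(n^2)$. Hence the map $(a,b)\mapsto$ support is a surjection from a set of size $\O(n^2)$ onto $\{\supp(x):\,x\in\Z(\sigma)\}$, which proves the claim.

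There is no real obstacle in this argument. The one step needing care is the contiguity claim, and it follows directly from the fact that $x_{\sigma(k)}$ is nonincreasing in $k$. The construction is also explicit: listing the candidate supports for a given $\sigma$ takes polynomial time, which will matter when the lemma is combined with \Cref{lem:HA} in the support construction.
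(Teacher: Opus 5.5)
Your proposal is correct and follows essentially the same argument as the paper: along $\sigma$ the entries split into a positive prefix, a zero block, and a negative suffix, so the support is determined by $\O(n^2)$ boundary choices. The only difference is that you parametrize by the counts $(a,b)$ of positive and negative entries instead of the zero-block endpoints $(t_1,t_2)=(a+1,\,n-b)$. This also covers the zero-free case, where $a+b=n$, without the separate case the paper treats.
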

\begin{pf}
Since every vector $x\in \Z(\sigma)$  is sorted by $\sigma$, 
its zero entries (if any) appear consecutively and form a contiguous block under this permutation. The support of $x$ is uniquely determined by  this zero block. Specifically, if $x$ contains no zero entries, there is exactly one support $\supp(x)=[n]$. Otherwise, there exist indices $1\le t_1\le t_2\le n$ such that 
\begin{align*}
	x_{\sigma(1)}\ge \cdots \ge x_{\sigma(t_1-1)} >0, \ \   x_{\sigma(t_1)}=\cdots =x_{\sigma(t_2)}=0,  \ \ 0>x_{\sigma(t_2+1)}\ge \cdots \ge x_{\sigma(n)}.  
\end{align*}
For any fixed pair $(t_1, t_2)$, the support of $x$ is then uniquely determined. Since there are $\O(n^2)$ possible choices of $(t_1, t_2)$, the conclusion follows.
\end{pf}

In the rest of this paper, we make the following assumption.
\begin{assumption}\label{assumption:permutation-oracle-time}
The permutation mapping  $\permMap$ in \Cref{def:map} can be accessed in polynomial time; we denote this cost by $\textsf{T}_2$. 
\end{assumption}
If the feasible region $\X$ is standard comonotone,  then \Cref{assumption:permutation-oracle-time} is trivial because of $\textsf{T}_2=
\O(1)$.
We recall that $\textsf{T}_1$ denotes the time required to solve a fixed-support subproblem in Assumption~\ref{assume:supp}.
We are now ready to present the main result in this section.
\begin{theorem}\label{them:supp} Suppose that $\set{X}$ is comonotone. Then the following hold:
\begin{enumerate}[(i)]
	\item There exists a collection of $\mathcal{O}(n^{2r})$ candidate supports for \eqref{eq:general-concave}, among which at least one support is optimal.
	\item  Under \Cref{assume:supp} and \Cref{assumption:permutation-oracle-time},  problem
	\eqref{eq:general-concave} admits a polynomial-time algorithm with complexity  $\O(n^{2r-2}\cdot\textsf{T}_2 +  n^{2r}\cdot\textsf{T}_1)$.
\end{enumerate}
\end{theorem}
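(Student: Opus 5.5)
The plan is to chain the three steps of the framework: \Cref{prop:linear}, then \Cref{lem:HA}, then \Cref{lem:number-per-order}.

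\emph{Step 1 (reduction).} By \Cref{prop:linear}, fix a vector $\hat c\in\R^r$ such that $\max_{x\in\X}\hat c^\top Ax$ attains its optimum and every maximizer of this linear counterpart is optimal for \eqref{eq:general-concave}. Note that $\hat c^\top A x=(A^\top\hat c)^\top x$. Hence the linear counterpart is a linear program over $\X$ with cost vector $v=A^\top\hat c\in\R^n$.

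\emph{Step 2 (space partition).} By \Cref{lem:HA}, the vector $\hat c$ lies in some cell $R$ of the arrangement generated by the hyperplanes $H_{ij}$ in \eqref{eq:HA-pairwise}. There are $\O(n^{2r-2})$ such cells. Each cell $R$ comes with a permutation $\pi_R$ satisfying $A^\top c\in\Z(\pi_R)$ for all $c\in R$; in particular $v\in\Z(\pi_R)$. Since the linear problem attains its optimum, \Cref{def:map} gives a maximizer $x^*\in\Z(\permMap(\pi_R))$. By Step 1, $x^*$ is optimal for \eqref{eq:general-concave}.

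\emph{Step 3 (support construction).} By \Cref{lem:number-per-order}, the supports of vectors in $\Z(\permMap(\pi_R))$ form a list of $\O(n^2)$ sets. Concretely, they are $[n]$ together with the sets obtained by deleting a contiguous block $\sigma(t_1),\dots,\sigma(t_2)$, where $\sigma=\permMap(\pi_R)$. Take the union of these lists over all cells $R$. This gives $\O(n^{2r-2})\cdot\O(n^2)=\O(n^{2r})$ candidate supports, and $\supp(x^*)$ belongs to the union. We do not know which cell contains $\hat c$, so the union must be formed over all cells. That proves (i).

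For (ii), the algorithm has three stages.
\begin{enumerate}[(a)]
\item Build the arrangement and pick one point $c_R$ in each cell. By \Cref{lem:HA-counting}, construction takes $\O(n^{2r-2})$ time when $r$ is treated as fixed.
\item For each cell, read off $\pi_R$ by sorting $A^\top c_R$, breaking ties consistently. Then query $\permMap(\pi_R)$ at cost $\textsf{T}_2$, for a total of $\O(n^{2r-2}\textsf{T}_2)$.
\item For each of the $\O(n^{2r})$ candidate supports $S$, call the oracle of \Cref{assume:supp} at cost $\textsf{T}_1$, and return the best solution found among those returned.
\end{enumerate}
The output is correct for the following reason. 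The optimal $x^*$ has support $S^*$ in the list. The oracle on $S^*$ returns some optimal solution of the restricted problem, because $x^*$ is feasible there, so the restricted optimum is attained. Its value is at least $f(Ax^*)$, which is the global optimum. Every oracle output is feasible for \eqref{eq:general-concave}, so the best one is globally optimal.

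The step most likely to need care is the bookkeeping in Step 2. For $c$ in a lower-dimensional cell, $A^\top c$ has ties, so it is sorted by several permutations. \Cref{lem:HA} guarantees only that one common $\pi_R$ works for the whole cell, and $\permMap(\pi_R)$ must be evaluated at that fixed choice. The argument does not depend on which tie-breaking is used, provided it is applied consistently within a cell. The extra sorting costs $\O(n\log n)$ per cell, which is polynomial and absorbed into the stated bound whenever $\textsf{T}_2\ge n\log n$ or $\textsf{T}_1$ dominates. If needed, one can fold it into $\textsf{T}_2$, since accessing $\permMap$ requires knowing $\pi$.
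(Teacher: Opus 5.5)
Your proposal is correct and follows essentially the same route as the paper's proof. It reduces to the linear counterpart via \Cref{prop:linear}, locates the cost vector in a cell of the pairwise arrangement from \Cref{lem:HA}, applies the witness $\permMap(\pi_R)$ from \Cref{def:map}, counts $\O(n^2)$ supports per ordering via \Cref{lem:number-per-order}, and runs the same enumeration algorithm as Part~II. Your concerns about tie-breaking and the sorting overhead need no special handling. Every $c$ in a cell has the same set of sorting permutations of $A^\top c$, so any choice of $\pi_R$ works. The $\O(n^{2r-2}\cdot n\log n)$ sorting cost is dominated by the $\O(n^{2r})$ enumeration term regardless of $\textsf{T}_2$.
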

\begin{pf}
Our proof is constructive and includes two parts.

\noindent \textbf{Part I.} 
We first prove (i). By \Cref{lem:HA}, the hyperplanes in \eqref{eq:HA-pairwise} partition $\Re^r$ into $K=\O(n^{2r-2})$ cells, which we denote by $R_1, \cdots, R_K$. In addition, for each $k\in [K]$, there exists a permutation $\pi^k\in \Pi_n$ such that $A^\top c\in\Z(\pi^k)$ for all $c\in R_k$. Let $\permMap$ be the permutation mapping associated with the comonotone set $\set{X}$ and $\sigma^k=\permMap(\pi^k)$ for all $k\in[K]$.

According to \Cref{prop:linear}, there exists a cost vector $c^*\in \Re^r$  such that every optimal solution to \eqref{eq:linear} with $c=c^*$ is also optimal for \eqref{eq:general-concave}. Since $\{R_k\}_{k\in[K]}$ forms a partition of $\R^r$, the vector $c^*$ lies in a certain cell $R_{\bar{k}}$ with $\bar{k}\in[K]$, implying $A^\top c^*\in\Z(\pi^{\bar{k}})$ by \Cref{lem:HA}. 
Combining the above with the optimality condition~\eqref{eq:optimality-general-comonotonicity}, we deduce that $\Z(\permMap(\pi^{{\bar{k}}}))=\Z(\sigma^{{\bar{k}}})$ contains an optimal solution to \eqref{eq:general-concave}, and therefore so does $\bigcup\limits_{k\in [K]}\Z(\sigma^{k})$. This allows us to equivalently convert \eqref{eq:general-concave}  into
\begin{align}\label{eq:linear_concave1}
	\max_{x\in \X} \bigg\{f(A^{\top} x): x\in \bigcup\limits_{k\in [K]} \Z(\sigma^{k})\bigg\}.
\end{align}
By \Cref{lem:number-per-order}, each fixed ordering $\Z(\sigma^k)$ can yield at most $\O(n^2)$ possible supports.
Since \eqref{eq:linear_concave1} involves $K=\O(n^{2r-2})$ orderings, the total number of candidate supports is $\O(n^{2r})$. This proves (i).

\noindent \textbf{Part II.} We next prove (ii). Let $S_1,\ldots,S_L$ be the support candidates constructed above, where
$L=\O(n^{2r})$. Then \eqref{eq:linear_concave1} admits the equivalent reformulation
\begin{align}\label{eq:linear_concave2}
	\max_{\ell \in [L]} \max_{x\in \X} \bigg\{f(A^{\top} x):\supp(x)=S_{\ell}\bigg\}.
\end{align}

The running time consists of two parts: constructing $\{S_\ell\}_{\ell\in[L]}$ and solving all fixed-support subproblems, i.e., the inner maximizations in \eqref{eq:linear_concave2}. Because accessing $\sigma^k=\permMap(\pi^k)$ costs $\textsf{T}_2$ per cell under \Cref{assumption:permutation-oracle-time} and there are $\O(n^{2r-2})$ cells, the total  time for support construction is $\O(n^{2r-2}\cdot \textsf{T}_2+ n^{2r})$. Moreover, under \Cref{assume:supp}, each fixed-support subproblem in \eqref{eq:linear_concave2} can be solved in $\textsf{T}_1$ time. Thus, addressing all subproblems requires $ L\cdot\textsf{T}_1 = \O(n^{2r}) \cdot \textsf{T}_1$ time. Consequently, the overall complexity is given by $\O\left(n^{2r-2} \cdot \textsf{T}_2 + n^{2r} \cdot \textsf{T}_1\right)$. This completes the proof.
\end{pf}

When $\set{X}$ represents a matroid or the bases of a matroid, the complexity bounds from \Cref{them:supp} can be further improved as shown in \Cref{cor:matroid} below, which recovers the result of \cite[Theorem 1.4]{onn2003convex} from a different perspective.

\begin{corollary}\label{cor:matroid}
If $\X \subseteq \{0,1\}^n$ encodes a matroid or the bases of a matroid,  then problem
\eqref{eq:general-concave} admits a polynomial-time algorithm with complexity  $\O(n^{2r-1} \log n)$.
\end{corollary}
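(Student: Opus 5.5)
We specialize the three-step framework behind \Cref{them:supp} to binary matroid sets, exploiting two simplifications. First, since $\X\subseteq\{0,1\}^n$, every $x\in\X$ is nonnegative. So for any ordering $\sigma$, a vector $x\in\Z(\sigma)\cap\X$ has the form $x_{\sigma(1)}=\cdots=x_{\sigma(t)}=1$ and $x_{\sigma(t+1)}=\cdots=x_{\sigma(n)}=0$ for some $t\in\{0,\dots,n\}$. Hence the bound of \Cref{lem:number-per-order} improves from $\O(n^2)$ to $\O(n)$ supports per ordering.

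Second, and more decisively, we do not enumerate supports at all. By the proof of \Cref{prop:matroid-comonotone}, for each cell $R_k$ of the arrangement in \Cref{lem:HA} with associated ordering $\pi^k$, the greedy procedure \eqref{eq:best-in-greedy} run along $\pi^k$ outputs a single vector $x^k$. This vector is optimal for $\max_{x\in\X}c^\top Ax$ for every $c\in R_k$ in the bases case. For independent sets, the positivity of $(A^\top c)_i$ also matters; we handle this by adding the $n$ hyperplanes $\{c:(A^\top c)_i=0\}$ to the arrangement. The total is still $\O(n^2)$ hyperplanes through the origin, so \Cref{lem:HA-counting} still gives $\O(n^{2r-2})$ cells, and within each cell the sign pattern of $A^\top c$ is also constant.

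With this, the argument runs as follows. By \Cref{prop:linear}, there is a $c^*$ such that every optimizer of \eqref{eq:linear} with $c=c^*$ is optimal for \eqref{eq:general-concave}. The vector $c^*$ lies in some cell $R_{\bar k}$, and $x^{\bar k}$ is an optimizer of \eqref{eq:linear} for $c^*$, hence optimal for \eqref{eq:general-concave}. The algorithm is therefore: construct the arrangement, pick one ordering $\pi^k$ (and sign pattern) per cell, run greedy to get $x^k$, evaluate $f(Ax^k)$, and return the best.

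The cost per cell is an $\O(n\log n)$ sort plus $n$ constant-time independence checks, which is the $\textsf{T}_2$ bound of \Cref{prop:matroid-comonotone}, plus $\O(rn)$ to evaluate $Ax^k$. This gives $\O(n^{2r-2}\cdot n\log n)=\O(n^{2r-1}\log n)$ work over all cells. The arrangement construction itself costs $\O(n^{2r-2})$ by \Cref{lem:HA-counting}, which is dominated, and $r$ is treated as a constant.

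The main obstacle is bookkeeping. Representatives for each cell must be extractable, meaning the ordering $\pi^k$ can be read off from a sample point of the cell within the stated time. The cells with ties (lower-dimensional cells) must also be handled correctly. For these, any permutation $\pi^k$ with $A^\top c\in\Z(\pi^k)$ on the cell suffices, since greedy optimality holds for every consistent tie-break. So we take a relative-interior point of the cell, sort $A^\top c$ at that point breaking ties arbitrarily, and run greedy. This keeps the per-cell cost at $\O(n\log n)$.
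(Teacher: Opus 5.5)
Your proposal is correct and follows essentially the same route as the paper: one best-in greedy solution per cell of the pairwise arrangement, refined by the coordinate hyperplanes $\{c:(A^\top c)_i=0\}$ in the independent-set case, giving $\O(n^{2r-2})$ cells times the $\O(n\log n)$ per-cell cost from \Cref{prop:matroid-comonotone}. Your opening observation that nonnegativity gives $\O(n)$ supports per ordering is not needed once you bypass support enumeration, but it does no harm.
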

\begin{pf}
Suppose that $\set{X}$ represents the bases of a matroid. Then, by \Cref{lem:greedy}, as long as the cost vector $A^{\top} c$ of \eqref{eq:linear} is sorted by the same permutation, the best-in greedy algorithm returns the same optimal solution $x^*$ to \eqref{eq:linear}.  It is thus enough to evaluate one support per permutation cell. Since the arrangement in \Cref{them:supp} has $\O(n^{2r-2})$ cells, this yields $\O(n^{2r-2})$ candidate supports. Furthermore, \Cref{assume:supp} is satisfied due to $\set{X}\subseteq\{0,1\}^n$. Hence,  the claimed complexity follows from $\textsf{T}_2=\O(n\log n)$ by \Cref{lem:greedy}.

We next turn to the case where $\set{X}$ represents a matroid. 
In this setting, the output of the best-in greedy algorithm also depends on the sign of the cost vector, as shown in the proof of \Cref{lem:greedy}. Motivated  by this,
we refine the hyperplane arrangement in \Cref{them:supp} by including the coordinate hyperplanes
\[
\hat H_i=\{c\in\R^r:\ (A^\top c)_i=0\},\qquad \forall i\in[n].
\]
On each induced cell $R$, both the relative ordering of the entries of $A^\top c$ (from~\eqref{eq:HA-pairwise}) and the sign pattern of $A^\top c$ (from $\{\hat H_i\}_{i\in [n]}$) remain fixed. Thereby, the solution $\bar x$ constructed in \Cref{prop:matroid-comonotone} is optimal for all $c\in R$, implying that each cell contributes a single support candidate. Since the total number of hyperplanes in $\Re^r$ is $n(n-1)/2+n=\O(n^2)$, according to \Cref{lem:HA-counting}, the refined hyperplane arrangement still yields $\O(n^{2r-2})$ cells and results in an overall complexity of $\O(n^{2r-2})\cdot \textsf{T}_2=\O(n^{2r-1}\log n)$.
\end{pf} 

\section{Improved complexity via lifting under standard comonotonicity}\label{sec:inv}
Lifting is a widely-used technique in mixed-integer programming to simplify algebraic representation by working in an extended space
where the object in question is easier to describe 
and then interpreting the original object as its projection \cite{yannakakis1988expressing, martin1990polyhedral, conforti2010extended, goemans2015smallest, Han2023}. In this section, we present a lifting technique of this kind to refine the complexity analysis of \eqref{eq:general-concave} when its feasible region $\X$ is standard comonotone. 
Rather than constructing the hyperplane arrangement directly in the original parameter space $\Re^r$ of $c$, we augment it with additional parameters and build an arrangement in a lifted space. 
Projecting the cells of the lifted arrangement back onto the original space yields the same partition as the pairwise arrangement~\eqref{eq:HA-pairwise}. Crucially, the lifted arrangement is defined by only $\O(n)$ hyperplanes, as opposed to $\O(n^2)$ in \eqref{eq:HA-pairwise}. 
This reduction in turn leads to improved complexity bounds under standard comonotonicity.

To illustrate the lifting idea, consider a specific standard comonotone set $\X=\{x\in [0,1]^n: e^{\top} x= s\}$, where $s\in[n]$. Since \eqref{eq:linear} is a linear program,  strong duality guarantees the existence of an optimal multiplier $\lambda$ such that \eqref{eq:linear} is  equivalent to
\[\max_{x\in [0,1]^n} c^{\top}Ax-\lambda e^{\top}x+\lambda s = \max_{x\in [0,1]^n} \left(A^{\top}c- \lambda e\right)^{\top}x+\lambda s. \]
Clearly,  any optimal solution $x^*$ to \eqref{eq:linear} must satisfy the thresholding rule: for any $i\in [n]$,
\begin{enumerate}[(i)]
\item If $\left(A^{\top}c\right)_i>\lambda$, then $x_i^*=1$.
\item If $\left(A^{\top}c\right)_i<\lambda$, then $x_i^*=0$.
\end{enumerate}

The thresholding rule motivates us to consider the lifted arrangement in $\R^r\times\R$ induced by the $n$ hyperplanes $\{(c,\lambda)\in\R^r\times\R:\,(A^\top c)_i-\lambda=0  \}$, $i\in [n]$. 

Figure~\ref{fig:lifted-vs-original-HA} illustrates an instance of an hyperplane arrangement and its lifted variant with $r=3$ and $n=4$. 	For visualization, we only depict the cross-section obtained by fixing $c_3=1$. 
In the original space of cost vectors $c$, the pairwise arrangement in Fig.~\ref{fig:original-HA} is induced by $\binom{n}{2}=6$ lines.  
In contrast, in the lifted space shown in Fig.~\ref{fig:lifted-HA}, it suffices to consider $n=4$ planes  which correspond to the four facets of a tetrahedron. 
The shaded region in Fig.~\ref{fig:original-HA} depicts the projection of this tetrahedron onto the original $c$-space. 
We formalize the above lifting idea in the remainder of the section.

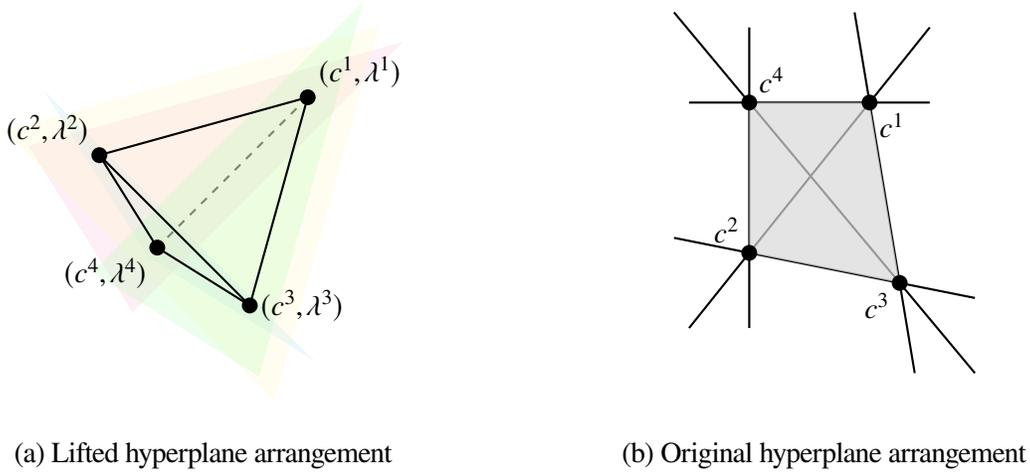
\begin{figure}[h]
\centering	\begin{subfigure}[t]{0.5\textwidth}
	\centering
	\def\ext{1.8} 
	\begin{tikzpicture}[scale=2,>=Stealth,baseline=-1.5cm]
		\tikzset{every path/.style={-}}
		\coordinate (A) at (1, 1, -1);
		\coordinate (B) at (0, 1, 0);
		\coordinate (C) at (1, 0, 0);
		\coordinate (D) at (0, 0, -1);
		
		\coordinate (G_bcd) at (barycentric cs:B=1,C=1,D=1); 
		\fill[cyan!30, fill opacity=0.25] 
		($(G_bcd)!\ext!(B)$) -- ($(G_bcd)!\ext!(C)$) -- ($(G_bcd)!\ext!(D)$) -- cycle;
		
		\coordinate (G_abd) at (barycentric cs:A=1,B=1,D=1);
		\fill[magenta!30, fill opacity=0.25] 
		($(G_abd)!\ext!(A)$) -- ($(G_abd)!\ext!(B)$) -- ($(G_abd)!\ext!(D)$) -- cycle;

		\draw[dashed, thick, black!80] (A) -- (D);

		\coordinate (G_abc) at (barycentric cs:A=1,B=1,C=1); 
		\fill[yellow!30, fill opacity=0.25] 
		($(G_abc)!\ext!(A)$) -- ($(G_abc)!\ext!(B)$) -- ($(G_abc)!\ext!(C)$) -- cycle;
		
		\coordinate (G_acd) at (barycentric cs:A=1,C=1,D=1); 
		\fill[green!30, fill opacity=0.25] 
		($(G_acd)!\ext!(A)$) -- ($(G_acd)!\ext!(C)$) -- ($(G_acd)!\ext!(D)$) -- cycle;
		
		\draw[thick, black] (B) -- (D); 
		\draw[ thick, black] (C) -- (D);
		\draw[ thick, black] (A) -- (B);
		\draw[ thick, black] (B) -- (C);
		\draw[ thick, black] (C) -- (A);
		
		\node[anchor=south west, xshift=0pt] at (A) {$(c^1,\lambda^1)$};
		\node[anchor=south east, xshift=0pt] at (B) {$(c^2,\lambda^2)$};
		\node[anchor=west, xshift=0pt] at (C) {$(c^3,\lambda^3)$};
		\node[anchor=north east, yshift=0pt] at (D) {$(c^4,\lambda^4)$};
		
		\foreach \p in {A,B,C,D}
		\fill[black] (\p) circle (1.5pt); 
		
	\end{tikzpicture}
	
	\caption{Lifted hyperplane arrangement}\label{fig:lifted-HA}
\end{subfigure}\hfill
\begin{subfigure}[t]{0.5\textwidth}			
	\centering
	\def\ext{1.5} 
	\begin{tikzpicture}[scale=2,>=Stealth,baseline=-2.2cm]
		\tikzset{every path/.style={-}}
		\coordinate (A) at (0.8, 1 );
		\coordinate (B) at (0, 1);
		\coordinate (C) at (1, -0.2);
		\coordinate (D) at (0, 0);
		
		\draw[thick]($(A)!\ext!(D)$) -- ($(D)!\ext!(A)$); 
		\draw[thick]($(C)!\ext!(B)$) -- ($(B)!\ext!(C)$); 
		\draw[thick]($(A)!\ext!(B)$) -- ($(B)!\ext!(A)$); 
		\draw[thick]($(A)!\ext!(C)$) -- ($(C)!\ext!(A)$); 
		\draw[thick]($(C)!\ext!(D)$) -- ($(D)!\ext!(C)$);
		\draw[thick]($(D)!\ext!(B)$) -- ($(B)!\ext!(D)$);  
		\fill[black!15, opacity=0.7] (A) -- (B) -- (D) -- (C) -- cycle;
		\node[anchor=north west] at (A) {$c^1$};
		\node[anchor=south west] at (B) {$c^4$};
		\node[anchor=north east] at (C) {$c^3$};
		\node[anchor=south east] at (D) {$c^2$};
		\foreach \p in {A,B,C,D}
		\fill[black] (\p) circle (1.5pt);
	\end{tikzpicture}
	\caption{Original hyperplane arrangement}\label{fig:original-HA}
\end{subfigure}
\caption{Comparison of hyperplane arrangements ($r=3,n=4$, slice $c_3=1$)}\label{fig:lifted-vs-original-HA}
\end{figure}

\subsection{New optimality conditions}
We first derive refined optimality conditions for \eqref{eq:linear} over a general standard \comonotone feasible set $\X$. The conditions show that comparing each component of $A^\top c$ against two thresholding parameters can determine the sign pattern of an optimal solution.

\begin{definition}\label{def:set}
For any vector $c\in \Re^r$ and  parameters $\underline \lambda \le \overline \lambda$, define the set
\[
\Q\left(c, \overline \lambda, \underline{\lambda}\right) \defeq 
\left\{ 
x \in \Re^n : \text{(a) sign constraints, (b) ordering constraints}
\right\},
\]
where
\begin{enumerate}[(a)]
	\item Sign constraints: For all $i\in [n]$,
	\[
	\begin{array}{ll}
		x_{i} > 0  \text{ if } \left( A^{\top} c \right)_i > \overline \lambda, \quad
		x_{i} = 0  \text{ if } \underline{\lambda} < \left( A^{\top} c \right)_i < \overline \lambda, \quad
		x_{i} < 0  \text{ if } \left( A^{\top} c \right)_i < \underline{\lambda},\\
		\vspace{-0.5em}\\
		x_i\ge0 \text{ if } \left( A^{\top} c \right)_i = \overline{\lambda} \text{ and } \overline\lambda>\underline\lambda, \quad  x_i \le0 \text{ if } \left( A^{\top} c \right)_i = \underline{\lambda} \text{ and } \overline\lambda>\underline\lambda.
	\end{array}
	\]
	\item Ordering constraints: For all $i,j\in [n]$ with $i<j$,
	\[x_i \ge  x_j  \text{ if } \left( A^{\top} c \right)_i = \left( A^{\top} c \right)_j\in\left\{\overline{\lambda},\underline\lambda\right\}.  \]
\end{enumerate}
For convenience, we treat $\Q(c, \overline \lambda, \underline \lambda)=\emptyset$ if $\underline \lambda>\overline \lambda$ .
\end{definition}

\begin{lemma}\label{lem:sol_perm_inv}
Suppose that $\X$ is standard \comonotone. Then, for any $c\in \Re^r$, there exist parameters $\underline\lambda \le \overline \lambda$ such that $\Q\left(c, \overline \lambda, \underline{\lambda}\right)$ contains an optimal solution $x^*$ of  \eqref{eq:linear}, i.e.,
\[ x^* \in\Q\left(c, \overline \lambda, \underline{\lambda}\right),\]
{whenever \eqref{eq:linear} attains the optimum.}
\end{lemma}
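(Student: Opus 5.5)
The plan is to extract $\underline\lambda\le\overline\lambda$ directly from the sign pattern of a well-chosen optimal solution whose ordering matches that of $v\defeq A^\top c$. Fix $c\in\R^r$ and assume \eqref{eq:linear} attains its optimum. Choose the permutation $\pi\in\Pi_n$ that sorts $v$ in nonincreasing order and breaks ties by increasing index. That is, $\pi^{-1}(i)<\pi^{-1}(j)$ whenever $v_i>v_j$, or $v_i=v_j$ and $i<j$. Then $v\in\Z(\pi)$. Since $\X$ is standard comonotone (Definition~\ref{def:map} with $\permMap$ the identity), there is an optimal solution $x^*$ of \eqref{eq:linear} with $x^*\in\Z(\pi)$. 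Two facts follow immediately. First, whenever $v_i=v_j$ with $i<j$, index $i$ precedes $j$ in $\pi$, so $x^*_i\ge x^*_j$; this gives all the ordering constraints (b) at once, not only those at the threshold values. Second, along $\pi$ the entries of $x^*$ form three consecutive blocks: $P=\{i:x^*_i>0\}$, then $Z=\{i:x^*_i=0\}$, then $N=\{i:x^*_i<0\}$.

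Next I define the thresholds from these blocks. Set $\overline\lambda\defeq\min_{i\in P}v_i$ and $\underline\lambda\defeq\max_{i\in N}v_i$. If $P=\emptyset$, set $\overline\lambda\defeq\max_i v_i$; if $N=\emptyset$, set $\underline\lambda\defeq\min_i v_i$. Because $v$ and $x^*$ are both sorted by $\pi$, every index in $Z\cup N$ comes after every index in $P$. Hence $v_j\le\overline\lambda$ for all $j\in Z\cup N$, and symmetrically $v_j\ge\underline\lambda$ for all $j\in P\cup Z$. Together these yield $\underline\lambda\le\overline\lambda$.

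The sign constraints (a) then follow one by one.
\begin{enumerate}[(1)]
\item If $v_i>\overline\lambda$, then $i\notin Z\cup N$, so $x^*_i>0$.
\item If $v_i<\underline\lambda$, then $i\notin P\cup Z$, so $x^*_i<0$.
\item If $\underline\lambda<v_i<\overline\lambda$, then $i\notin P$ (else $v_i\ge\overline\lambda$) and $i\notin N$ (else $v_i\le\underline\lambda$), so $x^*_i=0$.
\item Let $\overline\lambda>\underline\lambda$ and $v_i=\overline\lambda$. If $i\in N$, then $\underline\lambda\ge v_i=\overline\lambda$, a contradiction, so $x^*_i\ge0$.
\item Let $\overline\lambda>\underline\lambda$ and $v_i=\underline\lambda$. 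The symmetric argument excludes $i\in P$, so $x^*_i\le0$.
\end{enumerate}
Hence $x^*\in\Q(c,\overline\lambda,\underline\lambda)$, which is the claim.

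The main obstacle is the ordering constraints (b). An arbitrary sorting permutation of $v$ would not give $x^*_i\ge x^*_j$ for tied indices with $i<j$. The index-based tie-breaking in the choice of $\pi$ is exactly what resolves this, and it is legitimate because standard comonotonicity applies to every $\pi$ with $v\in\Z(\pi)$. The remaining care goes into the degenerate cases where $P$ or $N$ is empty. There I will check explicitly that the chosen conventions keep $\underline\lambda\le\overline\lambda$. They also leave no index with $v_i>\overline\lambda$ or $v_i<\underline\lambda$ that lacks the required sign: for example, if $P=\emptyset$, then $\overline\lambda=\max_i v_i$, so no $v_i$ exceeds it.
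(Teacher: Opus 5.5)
Your proof is correct, and its skeleton matches the paper's. You use the same index-tie-broken sorting permutation $\pi$ of $v=A^\top c$, the same optimal $x^*\in\Z(\pi)$ supplied by standard comonotonicity, the same thresholds $\overline\lambda=\min_{i\in P}v_i$ and $\underline\lambda=\max_{i\in N}v_i$ with the same conventions, and the same case-by-case check of Definition~\ref{def:set}.

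The one real difference is how you obtain the key chain $v_i\ge\overline\lambda\ge v_j\ge\underline\lambda\ge v_\ell$ for $i\in P$, $j\in Z$, $\ell\in N$. The paper cites Theorem~\ref{thm:comonotone-symmetry}(iii) and asserts that it remains necessary for standard comonotonicity even without compactness. You argue directly: $x^*_i>x^*_j$ forces $i$ to precede $j$ in $\pi$, and $v$ is sorted by the same $\pi$, so $v_i\ge v_j$.

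Your route is self-contained and needs no compactness. It is also the safer one, because the universal statement (iii) can fail for noncompact standard comonotone sets. Take $\X=\{(0,t):t\in\R\}\subseteq\R^2$. It is standard comonotone: a linear objective attains its maximum only when $v_2=0$, and then all of $\X$ is optimal, and $\X$ meets both $\Z(1,2)$ and $\Z(2,1)$. Yet for $v=(1,0)$ the maximizer $x^*=(0,1)$ gives $(v_1-v_2)(x^*_1-x^*_2)=-1<0$.

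The lemma itself is unaffected. What the paper actually needs is (iii) only for the particular $x^*\in\Z(\pi)$, and your shared-permutation argument supplies exactly that.
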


\begin{pf}
The vector $A^\top c$ may admit multiple sorting permutations $\pi\in\Pi_n$ due to ties. 
We consider one such $\pi$ using the lexicographical tie-breaking rule: if $(Ac)_i=(Ac)_j$ and $i<j$, then $\pi_i<\pi_j$.   By standard comonotonicity of $\set{X}$, \eqref{eq:linear} admits an optimal solution $x^*$ such that for all $1\le i<j\le n$, 
\begin{equation}\label{eq:tie-breaking}
	x_i^* \ge x_j^*   \text{ if }\left(A^{\top}c\right)_i = \left(A^{\top}c\right)_j.
\end{equation}

Next,  we partition the indices according to the sign of $x^*$:
\[\I^{+} = \{i: x^*_i>0\}, \ \ \I^{0}=\{i: x^*_i=0\}, \ \  \I^{-}=\{i: x^*_i<0\}, \] and introduce
\[ \underline{\lambda}=\max_{i\in\I^-} \left(A^{\top}c\right)_i, \ \  \overline\lambda =\min_{i\in \I^+} \left(A^{\top}c\right)_i,\]
with the conventions $\underline{\lambda}=\min_{i\in[n]}(A^\top c)_i$ if $\I^-=\emptyset$ and  $\overline{\lambda}=\max_{i\in[n]}(A^\top c)_i$ if $\I^+=\emptyset$.  It is evident that $x_i^*>x_j^*>x_{\ell}^*$ for all $i\in\I^+, j\in\I^0,\ell\in\I^-$. By  \Cref{thm:comonotone-symmetry}~(iii), which holds as a necessary condition for standard comonotonicity even when $\set X$ is not compact,  one has that 
\begin{equation}\label{eq:ordering-two-lambda}
	(A^\top c)_i\ge \overline{\lambda}\ge(A^\top c)_j \ge \underline{\lambda}\ge (A^\top c)_{\ell}\quad \forall i\in\I^+, j\in\I^0,\ell\in\I^-.
\end{equation}

Finally, to show $x^*\in\Q(c,\overline{\lambda},\underline{\lambda})$, we check the conditions in \Cref{def:set}:
\begin{list}{}
	{%
		\setlength{\leftmargin}{0pt}%
		\setlength{\labelwidth}{0pt}%
		\setlength{\labelsep}{0pt}%
		\setlength{\itemindent}{0pt}%
	}
		\item $\bullet$ If $(A^\top c)_i> \overline{\lambda}$, then $i\notin\I^-\cup\I^0$ in view of \eqref{eq:ordering-two-lambda}, implying $x_i^*>0$. Similarly, one can verify that $(A^\top c)_i<\underline{\lambda}$ implies $x_i^*<0$, and $\underline{\lambda}<(A^\top c)_i<\overline{\lambda}$ implies $x_i^*=0$.
	\item $\bullet$ If $\overline{\lambda}>\underline{\lambda}$ and $ \left( A^{\top} c \right)_i =\overline\lambda$, then one also has $ \left( A^{\top} c \right)_i>\underline{\lambda}$. Thus, \eqref{eq:ordering-two-lambda} implies $i\not\in\I^-$, i.e., $x_i^*\ge0$. Similarly, $\overline{\lambda}>\underline{\lambda}$ and $ \left( A^{\top} c \right)_i =\underline\lambda$ imply $x_i^*
	\le 0$.
	\item $\bullet$ The ordering constraints in \Cref{def:set} hold by \eqref{eq:tie-breaking}.
\end{list}
This completes the proof.
\end{pf}

The optimality condition in \Cref{lem:sol_perm_inv} is more straightforward than the one in \Cref{lem:HA}. The former explicitly determines the sign pattern of  an optimal solution to \eqref{eq:linear}, whereas the latter only provides the ordering information.

\subsection{Extended space partition and improved complexity}
In this subsection, building on the optimality condition in \Cref{lem:sol_perm_inv}, we propose a new partition of the extended parameter space $\R^{r}\times \R\times \R$ that corresponds to $c$, $\underline \lambda$, and $\overline \lambda$, respectively. We then show how this partition leads to an improved complexity bound under standard comonotonicity.

The partition of the extended space is induced by the following $2n+1$ hyperplanes
\begin{equation}\label{eq:HA-threshold}
\begin{aligned}
	H_i &= \left\{\left(c, \underline \lambda, \overline \lambda\right)\in \Re^{r}\times \Re\times \Re: \left( A^{\top} c \right)_i= \overline \lambda \right\}, \ \ \forall i\in [n],\\
	H_{i+n} &= \left\{\left(c, \underline \lambda, \overline \lambda\right) \in \Re^{r}\times \Re\times \Re: \left( A^{\top} c \right)_i= \underline \lambda \right\}, \ \ \forall i\in [n],\\
	H_{2n+1}&=\left\{\left(c, \underline \lambda, \overline \lambda\right)\in \Re^{r}\times \Re\times \Re: \overline \lambda= \underline \lambda \right\}.
\end{aligned}
\end{equation}
We need \Cref{lem:number-per-order-nn} below to bound the number of supports per cell.
\begin{lemma}\label{lem:number-per-order-nn}
For any $\sigma\in \Pi_n$, the set $\left\{\supp(x):\,x\in \Z(\pi)\cap\R^n_+\right\}$ contains $\O(n)$ supports. 
\end{lemma}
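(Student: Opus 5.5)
The argument mirrors the proof of \Cref{lem:number-per-order}, using nonnegativity to remove one of its two degrees of freedom. (The statement quantifies over $\sigma$ but writes $\Z(\pi)$; I read it as $\Z(\sigma)\cap\R^n_+=\Z^+(\sigma)$, and the argument is the same either way.)

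Fix $\sigma\in\Pi_n$ and take any $x\in\Z(\sigma)\cap\R^n_+$. Then
\[
x_{\sigma(1)}\ge x_{\sigma(2)}\ge\cdots\ge x_{\sigma(n)}\ge 0.
\]
Since the entries are nonnegative and nonincreasing along $\sigma$, once some entry $x_{\sigma(t)}$ vanishes, every later entry $x_{\sigma(t')}$ with $t'>t$ also vanishes, because $0\le x_{\sigma(t')}\le x_{\sigma(t)}=0$. So the zero entries form a terminal block in the order $\sigma$. Unlike the general case, there is no trailing block of negative entries.

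Define $t\in\{0,1,\ldots,n\}$ as the number of positive entries of $x$. Then
\[
\supp(x)=\{\sigma(1),\ldots,\sigma(t)\}.
\]
This follows because the positive entries are exactly the first $t$ in the order $\sigma$. The case $t=0$ gives the empty support, and $t=n$ gives $[n]$.

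Hence the support is determined entirely by the single integer $t$. The map $x\mapsto\supp(x)$ therefore takes at most $n+1$ values on $\Z(\sigma)\cap\R^n_+$, which gives the $\O(n)$ bound.

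There is no real obstacle. The only point needing care is the monotonicity argument showing that zeros must form a suffix. In \Cref{lem:number-per-order} the zero block was sandwiched between positive and negative blocks and needed two indices $(t_1,t_2)$. Here nonnegativity forces $t_2=n$, which is exactly what reduces the count from $\O(n^2)$ to $\O(n)$.
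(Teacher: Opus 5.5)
Your proof is correct and takes essentially the same route as the paper: nonnegativity together with the ordering $\sigma$ forces the zero entries into a terminal block, so the support is $\{\sigma(1),\ldots,\sigma(t)\}$ and is determined by the single index $t$. Your range $t\in\{0,1,\ldots,n\}$ (at most $n+1$ supports, including the empty one) is slightly more careful than the paper's count, and your reading of $\Z(\pi)$ as $\Z(\sigma)$ is the intended statement.
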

\begin{pf} 
If $x \in \Z(\pi)\cap\R^n_+$ is strictly positive, its support is exactly $[n]$.
If $x\in \Z(\pi)\cap\R^n_+$ contains zeros, 
by nonnegativity, the vector must satisfy
\[x_{\sigma(1)}\ge \cdots \ge x_{\sigma(t)} >0, \ \   x_{\sigma(t+1)}= \cdots = x_{\sigma(n)} =0 \]
for some index $t \in [n]$, which yields $n$ distinct supports. 
\end{pf}

\begin{theorem}\label{thm:standard-comonotone-complexity}
Suppose that $\X$ is standard \comonotone. Then the following statements hold:
\begin{enumerate}[(i)]
	\item There exists a collection of  {$\mathcal{O}(n^{r+1})$} candidate supports for \eqref{eq:general-concave}, among which at least one support is optimal.
	\item Under \Cref{assume:supp},  problem \eqref{eq:general-concave} admits a polynomial-time algorithm with complexity {$\O(n^{r+1} \cdot \textsf{T}_1)$}.
\end{enumerate}
\end{theorem}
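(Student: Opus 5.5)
We follow the three-step framework of \Cref{them:supp}, but replace the pairwise arrangement \eqref{eq:HA-pairwise} with the lifted arrangement \eqref{eq:HA-threshold} in $\R^{r}\times\R\times\R$.

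\textbf{Step 1 (reduction).} By \Cref{prop:linear} there is a $c^*$ such that every optimal solution of \eqref{eq:linear} with $c=c^*$ is optimal for \eqref{eq:general-concave}. By \Cref{lem:sol_perm_inv} there are thresholds $\underline\lambda^*\le\overline\lambda^*$ such that $\Q(c^*,\overline\lambda^*,\underline\lambda^*)$ contains such an optimal solution. Hence it suffices to enumerate the supports realizable inside the sets $\Q(c,\overline\lambda,\underline\lambda)$, with $(c,\underline\lambda,\overline\lambda)$ ranging over the extended space.

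\textbf{Step 2 (lifted partition).} The $2n+1$ hyperplanes in \eqref{eq:HA-threshold} live in $\R^{r+2}$. Their normals are $(A_{\cdot i},0,-1)$, $(A_{\cdot i},-1,0)$ and $(0,1,-1)$, so every hyperplane passes through the origin. \Cref{lem:HA-counting} therefore bounds the number of cells by $\O((2n)^{r+1})=\O(n^{r+1})$.

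Inside a cell $R$, the sign of each quantity $(A^\top c)_i-\overline\lambda$, $(A^\top c)_i-\underline\lambda$ and $\overline\lambda-\underline\lambda$ is constant. These signs are exactly what \Cref{def:set} uses to decide which sign constraint applies to each coordinate. The only other ingredient of $\Q$ is the ordering constraint. It is triggered only by ties $(A^\top c)_i=(A^\top c)_j\in\{\overline\lambda,\underline\lambda\}$, and such ties are themselves fixed on a cell: $(A^\top c)_i=\overline\lambda$ and $(A^\top c)_j=\overline\lambda$ together force $(A^\top c)_i=(A^\top c)_j$. Consequently $\Q(c,\overline\lambda,\underline\lambda)$ is literally the same set $\Q_R$ for every point of $R$. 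Cells contained in $\{\overline\lambda<\underline\lambda\}$ give $\Q=\emptyset$ and are discarded.

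\textbf{Step 3 (supports per cell).} This is the main obstacle: each cell must contribute only $\O(1)$ supports, not $\O(n)$ as in \Cref{lem:number-per-order-nn}. The sign constraints pin down $\supp(x)$ except on two index sets: $J^+=\{i:(A^\top c)_i=\overline\lambda\}$, where $x_i\ge 0$, and $J^-=\{i:(A^\top c)_i=\underline\lambda\}$, where $x_i\le0$. On $J^+$ the ordering constraints force $x_{i_1}\ge x_{i_2}\ge\cdots$ in index order, so the support restricted to $J^+$ is a prefix of $J^+$. Likewise the support on $J^-$ is a suffix of $J^-$.

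Two cases remain. If $\overline\lambda=\underline\lambda$, the sets $J^\pm$ merge into a single tie block with the ordering constraint, giving a prefix/suffix structure.

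A naive count then gives $\O(n)$ or $\O(n^2)$ choices per cell. I would remove this factor with a genericity argument. Since $(c,\underline\lambda,\overline\lambda)$ can be chosen freely while still satisfying \Cref{lem:sol_perm_inv}, take the $\lambda$'s as in its proof, with $\overline\lambda=\min_{i\in\I^+}(A^\top c)_i$. Then at least one index attaining $\overline\lambda$ is strictly positive, but still several indices can tie. The resolution is to count incidences rather than cells: a cell of dimension $\tau$ in which $|J^+|=m$ has its $m$ tie constraints already lying on $m$ hyperplanes. Summing over cells of each dimension via the $\tau$-cell bound $\O(n^{\tau-1})$ of \Cref{lem:HA-counting}, the number of (cell, prefix) pairs stays $\O(n^{r+1})$. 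Equivalently, a prefix choice on a $\tau$-cell costs at most $n^{(r+2)-\tau}$ extra factors, which is absorbed by the drop from $n^{r+1}$ to $n^{\tau-1}$ cells.

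If this incidence count proves delicate, the fallback is to perturb $c^*$ within its cell of the original arrangement so that all ties among entries of $A^\top c^*$ disappear while the same optimal $x^*$ is retained. The perturbed point then has $|J^\pm|\le1$, and each cell yields a single support.

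\textbf{Part (ii).} This follows directly: enumerate the arrangement in $\O(n^{r+1})$ time, list the $\O(n^{r+1})$ candidate supports, and solve each fixed-support subproblem with the oracle of \Cref{assume:supp}, for a total of $\O(n^{r+1}\cdot\textsf{T}_1)$. No $\textsf{T}_2$ term appears because $\permMap$ is the identity.
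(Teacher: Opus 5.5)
Your main line is the paper's proof. It uses the central arrangement of the $2n+1$ hyperplanes \eqref{eq:HA-threshold} in $\R^{r+2}$, the constancy of $\Q$ on each cell, the prefix/suffix structure on the tie sets, and the dimension-weighted count $n^{\tau-1}\cdot n^{(r+2)-\tau}=n^{r+1}$. That count is exactly the paper's Cases~1--3:
\begin{itemize}
\item tie-free cells: $\O(n^{r+1})$ cells, one support each;
\item one-sided ties: $(r+1)$-cells, $\O(n^{r})$ of them, $\O(n)$ supports each;
\item two-sided ties: $r$-cells, $\O(n^{r-1})$ of them, $\O(n^2)$ supports each.
\end{itemize}

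Two corrections are needed. First, the ``main obstacle'' you announce is not one: no cell needs to contribute $\O(1)$ supports, and the genericity detour on $\overline\lambda$ does nothing. Also, your sentence about $m$ ties lying on $m$ hyperplanes does not give the per-cell bound $n^{(r+2)-\tau}$. When $A$ has repeated columns the corresponding $H_i$ coincide, so $|J^+|$ can be of order $n$ on an $(r+1)$-cell. What you actually need is that $H_i$, $H_{j+n}$ and $H_{2n+1}$ are pairwise non-parallel. Hence a nonempty tie set forces $\tau\le r+1$, while two-sided ties (or any tie when $\overline\lambda=\underline\lambda$) force $\tau\le r$. The per-cell bounds $\O(n)$ and $\O(n^2)$ then come from \Cref{lem:number-per-order-nn} and \Cref{lem:number-per-order}.

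Second, drop the fallback, because it fails in three ways:
\begin{itemize}
\item A tie $(A^\top c^*)_i=(A^\top c^*)_j$ holds on the whole cell of $c^*$ in the pairwise arrangement, so moving within that cell cannot remove it.
\item Ties coming from equal columns of $A$ persist for every $c$.
\item Once you move away from the specific $c^*$ supplied by \Cref{prop:linear}, you lose the guarantee that the linear-program optimizer selected by \Cref{lem:sol_perm_inv} is optimal for \eqref{eq:general-concave}.
\end{itemize}
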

\begin{pf} Denote by $\H=\left\{H_t\right\}_{t\in [2n+1]}$ the hyperplanes in \eqref{eq:HA-threshold}. Let $\A(\H)=\{R_k\}_{k\in[K]}$ be the resulting arrangement.  Within each cell $R_k$,  the signs of
\[ \left( A^{\top} c \right)_i-  \overline \lambda, \forall i\in [n], \ \ \left( A^{\top} c \right)_i-  \underline \lambda, \forall i\in [n], \text{ and } \overline \lambda -\underline \lambda  \]
are constant. 
By \Cref{def:set}, the set $\Q(c, \overline \lambda, \underline \lambda)$ depends only on these sign patterns and thus is invariant within each cell.  More specifically, the following index sets $\I^+_k$, $\I^0_k$, $\I^-_k$, $\I^{=\overline \lambda}_k$, and $\I^{=\underline \lambda}_k$ remain fixed for any $(c, \overline \lambda, \underline \lambda)\in R_k$, where
\begin{align*}
	\begin{array}{ll}
		\I^+_k  =\left\{ i: \left( A^{\top} c \right)_i > \overline \lambda \right\} , \quad
		\I^0_k  = \left \{i: \underline{\lambda} < \left( A^{\top} c \right)_i < \overline \lambda \right\}, \quad
		\I^-_k  =\left\{i: \left( A^{\top} c \right)_i < \underline{\lambda}\right\}, \\
		\I^{=\overline{\lambda}}_k  =\left\{ i: \left( A^{\top} c \right)_i = \overline \lambda \right\} , \quad
		\I^{=\underline{\lambda}}_k  = \left\{i: \left( A^{\top} c \right)_i = \underline \lambda\right\}. 
	\end{array}
\end{align*}
Based on these index sets,
we define the cell-dependent set in $\R^n$:
\begin{equation}\label{eq:set_Z}
	\tilde\Upsilon^k\defeq \left\{x\in \Re^n:
	\begin{aligned}
		&x_i>0, \forall i\in \I_k^+, \ \  x_i=0, \forall i\in \I_k^0, \ \ x_i<0, \forall i\in \I_k^-\\ 
		&x_i\ge x_j, \forall i,j\in \I_k^{=\overline \lambda}  \text{ with } i<j \text{ or } \forall i,j\in \I_k^{=\underline \lambda}  \text{ with } i<j
	\end{aligned}\right\}.
\end{equation}
Since the sign of $\overline\lambda-\underline\lambda$ is constant on $R_k$, define $\Upsilon^k$ by setting $\Upsilon^k=\emptyset$ if $\overline\lambda<\underline\lambda$ on $R_k$, $\Upsilon^k=\tilde\Upsilon^k$ if $\overline\lambda=\underline\lambda$ on $R_k$, and 
\[\Upsilon^k=\tilde{\Upsilon}^k\cap\bigg\{ x\in\R^n:\,x_i\ge0\,\forall i\in \I_k^{=\overline \lambda}  \text{ and }  x_i\le0\,\forall i\in \I_k^{=\underline \lambda} \bigg\}\]
if $\overline{\lambda}>\underline{\lambda}$ on $R_k$. By construction, we have $\Upsilon^k=\Q(c, \overline \lambda, \underline \lambda)$ for every $(c, \overline \lambda, \underline \lambda)\in R_k$.

Combining \Cref{prop:linear} with \Cref{lem:sol_perm_inv}, there exist $c^*$  and $\underline \lambda^* \le \overline \lambda^*$ such that 
$\Q(c^*, \overline \lambda^*, \underline \lambda^*)$
contains an optimal solution of  \eqref{eq:general-concave}. Since $(c^*, \overline \lambda^*, \underline \lambda^*)$ must belong to a certain cell,  \eqref{eq:general-concave} can be equivalently converted into
\begin{align}\label{eq:linear_concave3}
	\max_{x\in \X} \left\{f(A x): x\in \bigcup\limits_{k\in [K]}\Upsilon^k\right\}.   
\end{align}

Next, we bound the number of supports contributed by each nonempty $\Upsilon^k$.  By  \eqref{eq:set_Z}, given $x\in \Upsilon^k$, the signs of $x_i$ are fixed for any $i\in \I^+_k\cup \I^0_k \cup \I^-_k$. While the entries indexed by $ \I_k^{=\overline\lambda}\cup \I_k^{=\underline \lambda}$ do not have fixed signs, their relative order is fixed by construction. This leads to three cases:
\begin{itemize}
	\item \emph{Case 1: $\I_k^{=\overline \lambda}\cup \I_k^{=\underline \lambda}=\emptyset$}. The number of such cells is at most $\O((2n)^{r+1})=\O(n^{r+1})$ by \Cref{lem:HA-counting}. In this case,  the corresponding set $\Upsilon^k$ can only contribute one single support candidate.
	\item \emph{Case 2: exactly one of $\I_k^{=\overline \lambda}$ and $ \I_k^{=\underline \lambda}$ is nonempty.} In this case, one must have $\overline{\lambda}>\underline{\lambda}$ and $\dim(R_k)\le r+1$.  The number of such $(r+1)$-cells is $\O((2n)^r)=\O(n^r)$ by \Cref{lem:HA-counting}. In addition, applying \Cref{lem:number-per-order-nn} to the nonempty tie set,  each such $\Upsilon^k$ can yield $\O(n)$ supports. 
	\item \emph{Case 3: both $\I_k^{=\overline \lambda}$ and $ \I_k^{=\underline \lambda}$ are nonempty.} It follows that at least two non-parallel hyperplanes in \eqref{eq:HA-threshold} are active on the cell $R_k$.
	In this case, one has $\dim(R_k)\le r$. The number of such $r-$cells is $\O((2n)^{r-1})=\O(n^{r-1})$ by \Cref{lem:HA-counting}. 
	Applying \Cref{lem:number-per-order} to $\I_k^{=\overline \lambda}\cup \I_k^{=\underline \lambda}$, each $\Upsilon(k)$ in \eqref{eq:linear_concave3} can yield $\O(n^2)$ supports. 
\end{itemize}
Combining the three cases, the total number of distinct supports for \eqref{eq:linear_concave3} is $\O(n^{r+1})+\O(n^r)\cdot\O(n)+\O(n^{r-1})\cdot\O(n^2)=\O(n^{r+1})$.  The proof for (ii) is identical to \textbf{Part II} of \Cref{them:supp} and thus omitted. 
\end{pf}

Compared with \Cref{them:supp}, \Cref{thm:standard-comonotone-complexity} improves the support complexity by a square-root factor, reducing it from $\O(n^{2r})$ to $\O(n^{r+1})$.

\subsection{Improved complexity under additional sign conditions}
\label{subsec:improve_perm_inv}
In this subsection, we  show that additional sign structures can lead to further complexity reductions for \eqref{eq:general-concave} over standard \comonotone sets. 
\subsubsection{Nonnegative and standard \comonotone sets}
When $\X$ is nonnegative and standard \comonotone, any optimal solution of \eqref{eq:linear} contains no negative entries. Accordingly, the two thresholds in \Cref{def:set} introduced to separate positive and negative entries are no longer both necessary. Indeed, it suffices to keep a single threshold associated with positive entries, leading to the simplified variant of $\Q$ in \Cref{def:set} below.

\begin{definition}\label{def:set_nonneg}
For any vector $c\in \Re^r$ and a parameter $\lambda$, define
\[
\Q_+ \left(c,  \lambda \right) \defeq 
\left\{ 
x \in \Re_+^n : \text{(a) sign constraints, (b) ordering constraints}
\right\},
\]
where
\begin{enumerate}[(a)]
	\item Sign constraints: For any $i\in [n]$,
	\[
	\begin{array}{ll}
		x_{i} > 0  \text{ if } \left( A^{\top} c \right)_i >  \lambda, \quad
		x_{i} = 0  \text{ if }  \left( A^{\top} c \right)_i < \lambda.
	\end{array}
	\]
	\item Ordering constraints: For any $i,j\in [n]$ with $i<j$,
	\[\begin{array}{ll}
		x_i \ge  x_j \ge 0& \text{if } \left( A^{\top} c \right)_i = \left( A^{\top} c \right)_j=\lambda.
	\end{array} \]
\end{enumerate}
\end{definition}

The set $\Q_+$ characterizes the optimality conditions of~\eqref{eq:linear} when $\set{X}$ is nonnegative and standard comonotone. In turn, the conditions on $(c,\lambda)$ in \Cref{def:set_nonneg} motivate partitioning the parameter space $\R^r\times \R$ using the $n$ hyperplanes below
\begin{equation}\label{eq:HA-threshold-nn}
H_i = \left\{\left(c, \lambda \right)\in \Re^{r}\times \Re: \left( A^{\top} c \right)_i=  \lambda \right\}, \ \ \forall i\in [n].
\end{equation}
Consequently,  the simpler partition yields fewer cells  than  \eqref{eq:HA-threshold} and leads to an improved complexity bound in \Cref{prop:nonneg-standard-comonotone}. 

\begin{proposition}
\label{prop:nonneg-standard-comonotone}
Suppose that $\X$ is nonnegative and standard \comonotone. Then 
\begin{enumerate}[(i)]
	\item There exists a collection of  $\mathcal{O}(n^{r})$ candidate supports for \eqref{eq:general-concave}, among which at least one support is optimal.
	\item Under \Cref{assume:supp},  problem \eqref{eq:general-concave} admits a polynomial-time algorithm with complexity $\O(n^{r} \cdot \textsf{T}_1)$.
\end{enumerate}
\end{proposition}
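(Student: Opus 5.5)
The plan is to mirror the proof of \Cref{thm:standard-comonotone-complexity}, replacing the two-threshold set $\Q$ by the one-threshold set $\Q_+$ and the $2n+1$ hyperplanes \eqref{eq:HA-threshold} by the $n$ hyperplanes \eqref{eq:HA-threshold-nn} in $\R^{r+1}$.

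\textbf{Step 1: a one-threshold optimality condition.} I first prove an analogue of \Cref{lem:sol_perm_inv}: for every $c\in\R^r$ there is $\lambda\in\R$ such that $\Q_+(c,\lambda)$ contains an optimal solution of \eqref{eq:linear}, whenever \eqref{eq:linear} attains its optimum.

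\begin{enumerate}[(a)]
\item As in that proof, sort $A^\top c$ with the lexicographic tie-breaking rule. Standard comonotonicity then gives an optimal $x^*$ satisfying \eqref{eq:tie-breaking}.
\item Since $\X\subseteq\R^n_+$, we have $x^*\ge0$, so only the sets $\I^+=\{i:x^*_i>0\}$ and $\I^0=\{i:x^*_i=0\}$ occur.
\item Set $\lambda=\min_{i\in\I^+}(A^\top c)_i$, with the convention $\lambda=\max_i (A^\top c)_i$ if $\I^+=\emptyset$.
\item Apply \Cref{thm:comonotone-symmetry}(iii) to pairs $i\in\I^+$, $j\in\I^0$. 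Since $x^*_i>x^*_j$, we get $(A^\top c)_i\ge(A^\top c)_j$. Hence $(A^\top c)_i>\lambda$ forces $i\in\I^+$, and $(A^\top c)_i<\lambda$ forces $i\in\I^0$.
\item The ordering constraints among the indices with $(A^\top c)_i=\lambda$ follow from \eqref{eq:tie-breaking}, and nonnegativity is automatic.
\end{enumerate}

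With the convention for $\I^+=\emptyset$, every index satisfies $(A^\top c)_i\le\lambda$, and the indices attaining $\lambda$ have $x^*_i=0$. This is compatible with $\Q_+$, since the ordering constraints only require $x_i\ge x_j\ge0$ and therefore allow zeros. So $x^*\in\Q_+(c,\lambda)$.

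\textbf{Step 2: partition and reduction.} Consider the arrangement of the $n$ hyperplanes \eqref{eq:HA-threshold-nn} in $\R^{r+1}$. On each cell $R_k$ the index sets $\I^+_k=\{i:(A^\top c)_i>\lambda\}$, $\I^0_k=\{i:(A^\top c)_i<\lambda\}$ and $\I^{=}_k=\{i:(A^\top c)_i=\lambda\}$ are constant. Therefore $\Q_+(c,\lambda)$ equals a fixed set $\Upsilon^k_+$ on $R_k$.

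Combining \Cref{prop:linear} with Step 1, some $(c^*,\lambda^*)$ yields a set $\Q_+(c^*,\lambda^*)$ containing an optimal solution of \eqref{eq:general-concave}. Hence \eqref{eq:general-concave} is equivalent to maximizing over $\X\cap\bigcup_k\Upsilon^k_+$, exactly as in \eqref{eq:linear_concave3}.

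\textbf{Step 3: counting supports.} I split the cells into two cases.

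\begin{itemize}
\item \emph{Case 1: $\I^=_k=\emptyset$.} By \Cref{lem:HA-counting} with $p=n$ and $q=r+1$, there are $\O(n^{r+1})$ cells, each contributing one support. This is too many, so I would refine the count here (see below).
\item \emph{Case 2: $\I^=_k\neq\emptyset$.} These are $r$-cells, of which there are $\O(n^r)$. The entries indexed by $\I^=_k$ are nonnegative with a fixed order, so \Cref{lem:number-per-order-nn} gives $\O(n)$ supports per cell, for $\O(n^{r+1})$ in total. This is also too many.
\end{itemize}

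So the main obstacle is shaving one factor of $n$ in both cases.

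\emph{Fixing Case 1.} The support $\I^+_k$ of a full-dimensional cell is determined by which values $(A^\top c)_i$ exceed $\lambda$. Moving $\lambda$ vertically within the open cell until it hits a hyperplane shows that every such cell is adjacent to a cell in which some hyperplane is active, so its support already appears in the counting for lower-dimensional cells. Equivalently, the supports from open cells are exactly the sets $\{i:(A^\top c)_i>\lambda\}$ with $\lambda$ equal to a value $(A^\top c)_j$, up to the top cell (empty support). This reduces Case 1 to $\O(n^r)$ supports, indexed by $r$-cells together with the choice "strict above".

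\emph{Fixing Case 2.} Generically, on an $r$-cell $|\I^=_k|=1$, which gives $\O(1)$ supports. Cells with $|\I^=_k|=m\ge2$ have dimension at most $r+2-m$, and there are $\O(n^{r+1-m})$ of them. Each contributes $\O(m)=\O(n)$ supports, so for $m\ge2$ they sum to $\O(n^r)$.

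I expect the careful general-position bookkeeping behind these two fixes to be the delicate part; the rest copies Part II of \Cref{them:supp} and yields (ii).
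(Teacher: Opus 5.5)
Your Steps 1 and 2 match the paper's argument: the one-threshold analogue of \Cref{lem:sol_perm_inv}, the $n$ hyperplanes \eqref{eq:HA-threshold-nn}, and $\Q_+(c,\lambda)$ being constant on each cell. Step 1 is carried out correctly. The gap is in Step 3.

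Case 1 is not really an obstacle. Every hyperplane $(A^\top c)_i=\lambda$ passes through the origin of $\R^{r+1}$, so the central clause of \Cref{lem:HA-counting} bounds the total number of cells by $\O(n^r)$, which is what the paper uses. Your $\lambda$-lowering argument is also valid, though the exceptional open cell in your description is the bottom one, with support $[n]$, not the top one.

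Your repair of Case 2, however, does not hold up, for two reasons.
\begin{itemize}
\item It relies on genericity that an arbitrary $A$ does not provide. The cell containing the origin has $|\I^=_k|=n$. A repeated column $a_i=a_j$ gives $H_i=H_j$, so an $r$-dimensional cell can carry any number of ties. Hence ``$m$ ties force dimension at most $r+2-m$'' fails.
\item Even in general position, the count ``$\O(n^{r+1-m})$ such cells'' is false. For $r\ge 2$ and $m=2$, each of the $\binom n2$ subspaces $H_i\cap H_j$ is cut by the remaining hyperplanes into $\Theta(n^{r-2})$ cells. That gives $\Theta(n^r)$ cells with $|\I^=_k|=2$, and charging $\O(n)$ supports to each does not give $\O(n^r)$.
\end{itemize}

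The paper handles Case 2 with the $\tau$-cell clause of \Cref{lem:HA-counting}: $\O(n^{r-1})$ cells carrying a tie, each contributing $\O(n)$ supports by \Cref{lem:number-per-order-nn}. That clause does not hold as stated. Already for $r=1$, the lines $\lambda=a_ic$ through the origin of $\R^2$ produce $2n$ tie rays. So your suspicion that Case 2 needs a finer count is justified.

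What is missing is the right count, which runs over incidences rather than cells. Each $\Upsilon^k$ yields exactly $|\I^=_k|+1$ supports: $\I^+_k$ together with an initial segment of $\I^=_k$ in index order. Therefore
\[
\sum_{k\in[K]}\bigl(|\I^=_k|+1\bigr)=K+\sum_{i\in[n]}\#\{k\in[K]:\ R_k\subseteq H_i\}.
\]
Here $K=\O(n^r)$ by the central bound. The cells contained in $H_i$ are exactly the cells of the central arrangement induced on the $r$-dimensional subspace $H_i$ by the at most $n-1$ hyperplanes $H_j\cap H_i$ with $H_j\neq H_i$. So there are $\O(n^{r-1})$ of them, and the total is $\O(n^r)$ supports.

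This needs no general-position assumption, since coincident hyperplanes are harmless. It also makes your Case 1 refinement unnecessary. Part (ii) then follows exactly as in Part II of \Cref{them:supp}.
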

\begin{pf}
Since the argument follows that of \Cref{thm:standard-comonotone-complexity}, we highlight only the necessary changes. Let $\A=\{R_k\}_{k\in[K]}$ be the arrangement induced by the hyperplanes in \eqref{eq:HA-threshold-nn}. By adapting the analysis of \Cref{lem:sol_perm_inv} and 
dropping the parameters $\underline \lambda$ therein,  one can deduce that for any $c\in \Re^r$, there exists a parameter $\lambda$ such that one optimal solution of \eqref{eq:linear} lies in $\Q_+(c, \lambda)$. Moreover, by construction,  $\Q_+(c, \lambda)$ is a constant set (denoted by $\Upsilon^k$) within each cell $R_k$. Consequently, as shown in \Cref{thm:standard-comonotone-complexity},  $\cup_{k\in[K]}\Upsilon^k$ contains an optimal solution to \eqref{eq:general-concave}. It remains to bound the number of possible supports for  $\cup_{k\in[K]}\Upsilon^k$. We first notice that \emph{Case~3} in the proof of \Cref{thm:standard-comonotone-complexity} does not exist. Moreover, since the lifted parameter space has one fewer dimension, the support count estimated in the analysis of \Cref{thm:standard-comonotone-complexity} reduces to $\O(n^r)$ by \Cref{lem:HA-counting}. We thus complete the proof.
\end{pf}

\subsubsection{Sign-invariant and standard \comonotone sets} A set $\X\subseteq \Re^n$ is called \textit{sign-invariant} if each $\bar x\in \X$ implies $x\in \X$ for all $x$ satisfying $|x|=|\bar x|$. Suppose that $\X$ is sign-invariant and standard \comonotone. Then, for any $c\in \Re^r$ and $x\in \X$, we can construct $\bar x\in \X$ such that
\[c^{\top} Ax \le  c^{\top} A\bar x= \left(\left|A^{\top}c\right|\right)^T |x|. \]
Specifically, for each $i\in [n]$, we let $\bar x_i= x_i$ if $(A^{\top}c)_ix_i\ge 0$, and $\bar x_i= -x_i$ otherwise. 
Consequently, if $x^*\in\set{X}$ is optimal to \eqref{eq:linear}, then $|x^*|$ must be optimal to
\begin{equation}\label{eq:sign_linear}
\max_{x\in \X} \left(\left|A^{\top}c\right|\right)^T x.
\end{equation}

Because $|x^*|\in\set{X}$ is nonnegative and shares the same support as $x^*$, most of the analysis developed earlier for nonnegative standard comonotone sets directly applies, with \eqref{eq:sign_linear} in place of \eqref{eq:linear}. The main difference is that the objective in \eqref{eq:sign_linear} involves absolute values of the cost vector. Accordingly, we modify $\Q_+(c, \lambda)$ by replacing $A^{\top}c$ in \Cref{def:set_nonneg} with $|A^{\top}c|$, and denote the resulting set by $\Q^{\text{sign}}(c, \lambda)$. We then refine the space partition to guarantee that the set $\Q^{\mathrm{sign}}(c, \lambda)$ remains constant within each cell. Specifically, to achieve component-wise comparisons of absolute values against $\lambda$,  we partition $\Re^{r}\times \Re$ using the following $2n$ hyperplanes
\begin{align*}
H_i &= \left\{\left(c, \lambda\right)\in \Re^{r}\times \Re: \left( A^{\top} c \right)_i=\lambda \right\}, \ \ \forall i\in [n],\\
H_{i+n} &= \left\{\left(c, \lambda\right) \in \Re^{r}\times \Re: \left( A^{\top} c \right)_i= -\lambda \right\}, \ \ \forall i\in [n].
\end{align*}
The analysis of \Cref{prop:nonneg-standard-comonotone} extends directly and leads to  \Cref{prop:sign-inv-standard-comonotone}. 

\begin{proposition}
\label{prop:sign-inv-standard-comonotone}
Suppose that $\X$ is sign-invariant and standard \comonotone. Then 
\begin{enumerate}[(i)]
	\item There exists a collection of  $\mathcal{O}(n^{r})$ candidate supports for \eqref{eq:general-concave}, among which at least one support is optimal.
	\item Under \Cref{assume:supp},  problem \eqref{eq:general-concave} admits a polynomial-time algorithm with complexity $\O(n^{r} \cdot \textsf{T}_1)$.
\end{enumerate}
\end{proposition}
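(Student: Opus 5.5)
The plan is to reduce to the nonnegative case of \Cref{prop:nonneg-standard-comonotone}, with $|A^\top c|$ playing the role of the cost vector. By \Cref{prop:linear} there is a $c^*\in\R^r$ such that every optimal solution of \eqref{eq:linear} with $c=c^*$ is optimal for \eqref{eq:general-concave}.

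First, I take an optimal $x^*$ of \eqref{eq:linear} at $c^*$. By the sign-flipping argument preceding the statement, the vector $\bar x$ with $|\bar x|=|x^*|$ and signs aligned with $A^\top c^*$ lies in $\X$ and is still optimal for \eqref{eq:linear}. So $\bar x$ is optimal for \eqref{eq:general-concave}. Since $\supp(\bar x)=\supp(|x^*|)$, it suffices to locate the support of a nonnegative vector $y\in\X$ that maximizes \eqref{eq:sign_linear} at $c^*$.

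Second, I need the analogue of \Cref{lem:sol_perm_inv}: for $w=|A^\top c|\ge 0$ there is a $\lambda$ such that \eqref{eq:sign_linear} has an optimal solution in $\Q^{\mathrm{sign}}(c,\lambda)$. I would take an optimal $y$ sorted by the lexicographic tie-breaking permutation of $w$, which standard comonotonicity allows. I then replace $y$ by $|y|$, which remains feasible by sign invariance and keeps the same order. It is still optimal because $w\ge0$ gives $w^\top|y|\ge w^\top y$. Next I set $\lambda=\min\{w_i: y_i>0\}$, and the ordering $(w_i-w_j)(y_i-y_j)\ge0$ from \Cref{thm:comonotone-symmetry}(iii) yields the sign constraints. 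This is the step I expect to be the main obstacle. Taking absolute values could destroy the ordering compatibility, so I plan to apply \Cref{thm:comonotone-symmetry}(iii) to $|y|$ directly, which is legitimate because $|y|$ is itself an optimal solution.

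Third, I partition $\R^r\times\R$ by the $2n$ hyperplanes $(A^\top c)_i=\pm\lambda$. On each cell, the sign of $|(A^\top c)_i|-\lambda$ is constant, since it is determined by the signs of $(A^\top c)_i-\lambda$ and $(A^\top c)_i+\lambda$, and so is the tie set $\{i:|(A^\top c)_i|=\lambda\}$. Hence $\Q^{\mathrm{sign}}$ is constant on each cell. Cells with an empty tie set contribute one support each, and by \Cref{lem:HA-counting} there are $\O((2n)^{r})$ of them. Cells with a nonempty tie set have dimension at most $r$, so by \Cref{lem:HA-counting} there are $\O(n^{r-1})$ of them. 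Each such cell contributes $\O(n)$ supports by \Cref{lem:number-per-order-nn}, because the tie entries are nonnegative and ordered. The total is $\O(n^r)$ supports, which proves (i). Part (ii) then follows exactly as in Part II of \Cref{them:supp}: I enumerate the candidate supports and solve each fixed-support subproblem \eqref{eq:assumption1} in time $\textsf{T}_1$, for a total of $\O(n^r\cdot\textsf{T}_1)$.
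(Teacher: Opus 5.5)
Your outline is the same as the paper's. The paper only sketches this proposition: the nonnegative analysis ``extends directly'' with $|A^\top c|$ in place of $A^\top c$ and the $2n$ hyperplanes $(A^\top c)_i=\pm\lambda$. Your cell count is correct. The gap is in your second step, which is the only nontrivial one.

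The claim that $|y|$ ``keeps the same order'' is false. Applying \Cref{thm:comonotone-symmetry}(iii) to $|y|$ only recovers strict comparisons, i.e., the sign constraints of $\Q^{\mathrm{sign}}(c,\lambda)$. It says nothing about the ordering constraints on the tie block $\{i: w_i=\lambda\}$, where $w=|A^\top c|$.

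\begin{itemize}
\item For $\lambda>0$ this is harmless. Optimality of $y$ and feasibility of $|y|$ force $w^\top|y|=w^\top y$, hence $y_i\ge 0$ whenever $w_i>0$, so $|y|=y$ on that block.
\item When $\lambda=0$, the tie block is $\{i:(A^\top c)_i=0\}$. There $y$ may have negative entries, and $|y|$ need not be sorted.
\end{itemize}

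Concretely, take $\X=[-1,1]^3$ and $A^\top c=(1,0,0)^\top$. Then $y=(1,0,-1)^\top$ is an optimal solution of \eqref{eq:sign_linear} sorted by the lexicographic permutation $(1,2,3)$. However, $|y|=(1,0,1)^\top$ lies in no $\Q^{\mathrm{sign}}(c,\lambda)$: a negative $\lambda$ forces $x>0$, a positive $\lambda$ forces $x_2=x_3=0$, and $\lambda=0$ requires $x_2\ge x_3$. So your argument does not establish the needed analogue of \Cref{lem:sol_perm_inv}.

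The repair is to produce a nonnegative maximizer sorted by $\pi$ directly, rather than taking absolute values afterwards.

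\begin{itemize}
\item Pick $u$ with $u_{\pi(1)}>\cdots>u_{\pi(n)}>0$ and set $w^\epsilon=w+\epsilon u$. This vector is strictly positive and has no ties along $\pi$.
\item Any maximizer of $(w^\epsilon)^\top x$ over $\X$ is nonnegative: flipping a negative coordinate, which sign invariance allows, strictly increases the objective.
\item Any such maximizer lies in $\Z(\pi)$ by \Cref{thm:comonotone-symmetry}(iii).
\item A limit point $\bar y$ as $\epsilon\downarrow 0$ is then a maximizer of \eqref{eq:sign_linear} in $\Z(\pi)\cap\R^n_+$.
\item Set $\lambda=\min\{w_i:\bar y_i>0\}$, or any $\lambda>\max_i w_i$ if $\bar y=0$. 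Then $\bar y\in\Q^{\mathrm{sign}}(c,\lambda)$ follows from the ordering alone.
\end{itemize}

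This uses compactness of $\X$. Your appeal to \Cref{thm:comonotone-symmetry}(iii) needs it anyway, since that is the setting in which the theorem is established.

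A smaller point: your first step argues the wrong direction. What you need is that any nonnegative maximizer of \eqref{eq:sign_linear}, after aligning its signs with $A^\top c^*$, is a maximizer of \eqref{eq:linear} with the same support. This holds because the two problems have equal optimal values, by the same sign-flip argument.
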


We illustrate further uses of the lifting idea in the subsequent sections.

\section{Complexity analysis beyond convexity and comonotonicity}\label{sec:extension}
In previous sections, we assume a finite convex objective function and a comonotone feasible region. In this section, we extend the established complexity results to two settings where these assumptions fail.

\subsection{Quasi-convex objective}
In some applications, such as the minimal cost-reliability ratio spanning tree problem \cite{chandrasekaran1981minimal}, the objective in \eqref{eq:general-concave} is a \emph{quasi-convex} rather than convex function. To be more precise, a function $g:\set{D}\to\R$ is called  \emph{quasi-convex}, where $\set{D}\subset\R^n$ is a convex open set, if the level set $\{ x\in\set{D}:g(x)\le t\}$ is convex for all $t\in\R$. An important source of quasi-convex functions arises in fractional programming \cite{goyal2013fptas, megiddo1978combinatorial, shigeno1995algorithm, hashizume1987approximation}. Specifically, the function $g(x)=\frac{g_1(x)}{g_2(x)}$ is quasi-convex when $g_1:\set{D}\to\R_+$ is convex and $g_2:\set{D}\to\R_{++}$ is concave. Moreover, composing a quasi-convex function with a univariate monotone nondecreasing function preserves quasi-convexity.  Similar to Proposition~\ref{prop:linear}, under additional regularity conditions, a quasi-convex maximization problem can be reduced to a nominal linear optimization problem.
\begin{proposition}\label{prop:quasi-convex2linear}
Suppose that $f:\set{D}\to\R$ is a upper semicontinuous and quasi-convex function, where $\set{D}\subseteq\R^r$ is open and convex, and the set $A\set{X}\defeq\{ Ax:x\in\set{X} \}\subseteq\set{D}$ is compact. Then there exists a vector $c\in\R^r$ such that every optimal solution to the linear counterpart~\eqref{eq:linear} is also optimal for the original problem~\eqref{eq:general-concave}. 
\end{proposition}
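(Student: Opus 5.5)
The plan mirrors \Cref{prop:linear}, with a separating hyperplane in place of the subgradient inequality. Let $\hat x$ be an optimal solution of \eqref{eq:general-concave}, set $\hat y=A\hat x$ and $t^*=f(\hat y)$. Then $f(y)\le t^*$ for all $y\in A\set{X}$. The main obstacle is that a nonconvex $f$ may be constant on an open neighborhood of $\hat y$, so $\hat y$ can be an interior point of the level set $L=\{y\in\set{D}:f(y)\le t^*\}$. Separating $\hat y$ from the complement of $L$ therefore gives no useful information. I would instead work with the compact convex set $K\defeq\conv(A\set{X})\subseteq\set{D}$, which is contained in $\set{D}$ because $\set{D}$ is convex.

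First, I show $\max_{y\in K}f(y)=t^*$. Since $L$ is convex (quasi-convexity) and contains $A\set{X}$, it contains $K$. So $f\le t^*$ on $K$, with equality at $\hat y\in K$. Next, let $E\defeq\argmax_{y\in K}f(y)=\{y\in K: f(y)\ge t^*\}$. By upper semicontinuity of $f$ on $\set{D}\supseteq K$, $E$ is closed in $K$, hence compact, and it is nonempty. I then pick an extreme point of $\conv(E)$, or more concretely a point of $E$ that is lexicographically maximal, and denote it $\bar y$.

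The core step is to produce $c$ such that $\argmax_{y\in K}c^\top y\subseteq E$. I would use the following recursive construction. Choose $c^1\neq 0$ arbitrarily and let $F_1=\argmax_{K}(c^1)^\top y$, a compact face of $K$. If $F_1\subseteq E$, we are done. Otherwise I want to steer the face toward $E$, so I would choose the direction from the geometry of $E$. Take $c$ exposing a point of $\conv(E)$ that is extreme and exposed in $\conv(E)$; Straszewicz's theorem guarantees exposed points exist, and exposed points of $\conv(E)$ lie in $E$. The difficulty is that an exposed point of $\conv(E)$ need not be exposed in $K$.

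To handle this I would pass to faces. Let $F$ be the minimal face of $K$ containing $\bar y$. Every point of $F$ is a convex combination involving points of $A\set{X}\cap F$, and quasi-convexity gives $f(\bar y)\le\max$ of $f$ over those points. Hence $E$ meets $F\cap\cl A\set{X}$. I then induct on $\dim K$ using a lexicographic sequence of linear objectives $c^1,c^2,\dots$, each chosen to expose a face meeting $E$. The final single vector is $c=c^1+\epsilon c^2+\dots$ with $\epsilon$ small. Compactness of each face ensures that the argmax of this perturbed objective equals the lexicographic argmax, for all sufficiently small $\epsilon$.

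The induction ends at a face consisting entirely of points of $E$. If it does not end there, the process reaches a singleton face, which is an extreme point of $K$ and hence lies in $\cl A\set{X}$, and it lies in $E$ because it was chosen that way. Finally, every optimal $x$ of \eqref{eq:linear} for this $c$ has $Ax$ in the exposed face, which is contained in $E$, so $f(Ax)=t^*$ and $x$ is optimal. The step I expect to be delicate is guaranteeing at each stage that some exposed subface is contained in $E$. I would try to get this by taking the lexicographic maximizer over $E$ with respect to the same sequence $c^1,c^2,\dots$, which forces consistency between the chosen faces and $E$.
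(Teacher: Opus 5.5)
\textbf{There is a genuine gap.} It sits exactly at the step you flag as delicate. Producing $c$ with $\operatorname{arg\,max}_{y\in K}c^\top y\subseteq E$ is the whole content of the proposition, and the mechanism you sketch for it fails when $K$ is not a polytope.

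First, your claim that $c=c^1+\epsilon c^2+\cdots$ has the lexicographic argmax as its argmax for small $\epsilon$ holds for polytopes but is false in general, because a face of an exposed face need not be exposed. Let $K$ be the convex hull of the two unit disks centred at $(\pm1,0)$ (a ``stadium''), and let $p=(1,1)$. With $c^1=(0,1)$ and $c^2=(1,0)$ the lexicographic argmax is $\{p\}$. Yet for every $\epsilon>0$ the unique maximizer of $(\epsilon,1)^\top y$ over $K$ is $(1+\epsilon/\sqrt{1+\epsilon^2},\,1/\sqrt{1+\epsilon^2})\neq p$.

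Second, and more fundamentally, every property of $f$ your argument uses is a property of the restriction $f|_K$: convexity of $L\cap K$, closedness of $E$ in $K$, and quasi-convexity along faces of $K$. For functions defined only on $K$ the conclusion is false. On the stadium with $A\mathcal{X}=K$, set $f(p)=1$ and $f=0$ on $K\setminus\{p\}$. Its sublevel sets $\emptyset$, $K\setminus\{p\}$, $K$ are convex because $p$ is extreme, and its superlevel sets are closed. Its unique maximizer is $p$, yet no linear objective has its argmax inside $\{p\}$: the only supporting line at $p$ is $y_2=1$, which exposes the whole top segment. So no refinement of a strategy confined to $K$ can close the gap. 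You must use that $f$ is upper semicontinuous on an \emph{open} set containing $K$.

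\textbf{The missing idea.} This ingredient also removes the obstacle you describe at the start. Separate $\hat y$ not from the complement of $L$ but from the strict sublevel set $\mathcal{S}=\{y\in\mathcal{D}: f(y)<t^*\}$.
\begin{itemize}
\item $\mathcal{S}$ is convex, being a nested union of the convex sets $\{f\le s\}$ for $s<t^*$.
\item $\mathcal{S}$ is open in $\mathbb{R}^r$, by upper semicontinuity of $f$ on the open set $\mathcal{D}$.
\item $\hat y\notin\mathcal{S}$.
\end{itemize}
If $\mathcal{S}=\emptyset$, any $c$ works. Otherwise separation gives $c\neq0$ with $c^\top y\le c^\top\hat y$ on $\mathcal{S}$, and openness upgrades this to strict inequality. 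Any maximizer $y^*$ of $c^\top y$ over $A\mathcal{X}$ then has $c^\top y^*\ge c^\top\hat y$, so $y^*\notin\mathcal{S}$. Hence $f(y^*)\ge t^*$, and therefore $f(y^*)=t^*$.

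This is the paper's argument. It needs no faces, no lexicographic ordering and no Straszewicz. It also explains why the stadium function above has no upper semicontinuous quasi-convex extension to a neighbourhood of $K$: an open convex set containing $K\setminus\{p\}$ but not $p$ would, by separation, give a line exposing $p$.
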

\begin{pf}
Since $f$ is upper semicontinuous and $A\set{X}$ is compact, the problem $\max\limits_{y\in A\set{X}}f(y)$ always admits an optimal solution $\bar y$. Let $f_{\max}=f(\bar y)$. Because $f$ is quasi-concave and upper semicontinuous, the strict sublevel set $\set{S}\defeq\{ y:f(y)< f_{\max} \}$ is an open convex set. Moreover, since $\bar y\notin \set{S}$, one can deduce from the renowned hyperplane separating theorem that there exists a $c\in\R^r$ such that 
\[ \set{S}\subseteq\{ y: c^\top(y-\bar y)\le0\}. \] Because $\set{S}$ is open, the inclusion can be strengthened to
\begin{equation}\label{eq:open-set-inclusion}
	\set{S}\subseteq\{ y: c^\top(y-\bar y)<0\}.
\end{equation}

Now let $y^*$ be any optimal solution to $\max\limits_{y\in A\set{X}}\;c^\top y$. Then $c^\top y^*\ge c^\top \bar y$, implying $y^*\notin \set{S}$ by~\eqref{eq:open-set-inclusion}.  Hence, by the definition of $\set{S}$, one has $\displaystyle y^*\in\mathop{\argmax}_{y\in A\set{X}}f(y)$. Finally, any optimal solution $x^*$ for  \eqref{eq:linear} leads to an optimal solution $Ax^*\in\argmax\limits_{y\in A\set{X}}f(y)$. 
\end{pf}

Because the convexity of the objective of \eqref{eq:general-concave} is only used in Proposition~\ref{prop:linear} to derive the results in Section~\ref{sec:poly} and \ref{sec:inv}, Proposition~\ref{prop:quasi-convex2linear} immediately implies the following remark.
\begin{remark}
The theoretical complexity bounds established in Section~\ref{sec:poly} and \ref{sec:inv} continue to hold for \eqref{eq:general-concave} with a quasi-convex objective $f$, provided that  the conditions in Proposition~\ref{prop:quasi-convex2linear} are satisfied.
\end{remark}

We emphasize that that the upper semicontinuity and compactness assumptions imposed in Proposition~\ref{prop:quasi-convex2linear} are necessary, which we illustrate in Example~\ref{ex:usc} and \ref{ex:compactness} below. 
Therefore, Proposition~\ref{prop:quasi-convex2linear} does not subsume Proposition~\ref{prop:linear}.
\begin{example}\label{ex:usc}
Let $n=r=1$, $\set{X}=\R$, $A=1$,  and $f(x)=\min\{|x|,1\}$. One can verify that the maximizers \eqref{eq:general-concave} are $(-\infty,-1]\cup[1,\infty)$. In contrast, for any nonzero $c\in\R$, the linear counterpart $\max\limits_{x\in\set{X}} c^\top x$ is unbounded and hence has no optimal solution. 
If $c=0$, then every $x\in\set{X}$ is optimal for the linear problem.
Thus, without compactness of $A\set{X}$, the conclusion of Proposition~\ref{prop:quasi-convex2linear} may fail. \hfill\Halmos
	\end{example}
	
	\begin{example}\label{ex:compactness}
Let $n=2$ and $A=I$. Define
\[ \set{X}=\bigg\{ (x_1,x_2):\, 0\le x_1\le \sqrt{1-x_2^2} \bigg\}\cup\bigg\{ (x_1,x_2):\,-1\le x_1,x_2\le 1 \bigg\}, \]
and 
\[ f(x_1,x_2)=\begin{cases}
		1&\text{if } x_2>1,\\
		0&\text{if }x=(0,1),\\
		-1&\text{otherwise}.
	\end{cases} \]
One can verify that $f$ is quasi-convex but not upper semicontinuous. Moreover, it admits a unique maximizer $\bar x=(0,1)$ over $\set{X}$. 
However, $\bar x$ is not an exposed point of $\set{X}$ (see Figure~\ref{fig:non-exposed-case}). As a result, no linear objective can single out $\bar x$. \hfill\Halmos

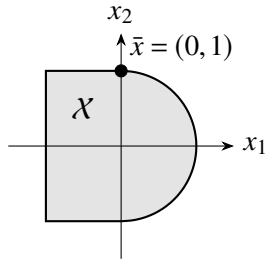
\begin{figure}[H]
	\begin{center}
		\begin{tikzpicture}%
			\tikzset{every path/.style={-}}
			\fill[black!15,opacity=0.7] (-1,-1) rectangle (0,1);
			
			\fill[black!15, opacity=0.7,domain=-90:90,smooth,variable=\t]
			plot ({cos(\t)},{sin(\t)}) -- (0,1) -- (0,-1);
			
			\draw[thick] (0,-1)--(-1,-1) -- (-1,1) -- (0,1);
			
			\draw[thick,domain=-90:90,smooth,variable=\t] plot ({cos(\t)},{sin(\t)});
			\draw[-Stealth] (-1.5,0) -- (1.5,0) node[right] {$x_1$};
			\draw[-Stealth] (0,-1.5) -- (0,1.5) node[above] {$x_2$};
			
			\filldraw[black] (0,1) circle (0.08) node[above right] {$\bar{x}=(0,1)$};
			\node[black] at (-0.5, 0.5) {$\set{X}$};
		\end{tikzpicture}
	\end{center}
	\caption{ The optimal solution cannot be exposed in $\set{X}$.}\label{fig:non-exposed-case}
\end{figure}
\end{example}

\subsection{Affine restriction of binary comonotone sets}
In this subsection, we relax the assumption that $\set{X}$ is a \comonotone set. Instead, we study the case where $\set{X}=\set{S}\cap\set{P}$ satisfying Assumption~\ref{assume:affine-restriction} below.
\begin{assumption}\label{assume:affine-restriction}
$\conv(\set{S}\cap\set{P})=\conv(\set{S})\cap\set{P}$, where $\set{S}\subseteq\{0,1\}^n$ is \comonotone and $\set{P}=\{x\in\R^n:\, Mx\le b\}$ for some $M\in\R^{m\times n}$ and $b\in\R^m$. 
\end{assumption}  Assumption~\ref{assume:affine-restriction} holds for example in either of the following cases: (i)~$\set{X}$ is a matroid and $\set{P}$ is a matroid polytope \cite{edmonds1970}, or (ii)~the affine constraints defining $\set{P}$ are \emph{facial} for $\set{X}$; see \cite[Section~3.1]{balas2018disjunctive} for a formal definition and further discussion.

The additional constraints imposed by $\set{P}$ can destroy the \comonotone structure of $\set{X}$. Proposition~\ref{prop:eliminating-linear-constraints} shows that their effect can be absorbed into a modified linear objective over $\set{S}$.

\begin{proposition}\label{prop:eliminating-linear-constraints}
Suppose $\set{X}=\set{S}\cap\set{P}$ satisfies Assumption~\ref{assume:affine-restriction}. Then for every $v\in\R^n$, there exists $\gamma\in\R_+^m$ such that $\displaystyle\bar x\in\mathop{\argmax}\limits_{x\in\set{X}} v^\top x$ if and only if 
\begin{equation}\label{eq:intersection-three-sets}
	\bar x\in\set{X}\cap \set{P}_=(\gamma)\cap\left[\mathop{\argmax}_{x\in\set{S}}\left(v-M^\top\gamma\right)^\top x\right],
\end{equation}
where $\set{P}_=(\gamma)\defeq\left\{ x\in\R^n:\,(Mx)_i=b_i\;\forall i\in[m] \text{ s.t. }\gamma_i\neq0 \right\}.$
\end{proposition}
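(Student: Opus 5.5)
The plan is to reduce the problem to linear programming duality over the polytope $\conv(\set{S})\cap\set{P}$, using \Cref{assume:affine-restriction}. Since $\set{S}\subseteq\{0,1\}^n$ is finite, $\conv(\set{S})$ is a polytope, and so is $\conv(\set{X})=\conv(\set{S})\cap\set{P}$. The first step is to record that
\[
\argmax_{x\in\set{X}} v^\top x=\set{X}\cap\argmax_{x\in\conv(\set{S})\cap\set{P}}v^\top x .
\]
This holds because a linear function attains the same maximum over a set and over its convex hull. We may assume $\set{X}\neq\emptyset$; otherwise both sides of the claimed equivalence are empty and any $\gamma$ works.

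Next we apply LP duality to $\max\{v^\top x:\,x\in\conv(\set{S}),\,Mx\le b\}$. Write $\conv(\set{S})$ by finitely many linear inequalities and dualize only the constraints $Mx\le b$, which is partial Lagrangian duality for a feasible, bounded LP. Strong duality then gives a multiplier $\gamma\in\R^m_+$ such that $x$ is optimal for this LP if and only if three conditions hold: $x\in\conv(\set{S})\cap\set{P}$, $x$ maximizes $(v-M^\top\gamma)^\top x$ over $\conv(\set{S})$, and complementary slackness $\gamma_i((Mx)_i-b_i)=0$ holds for all $i$.

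To justify that this characterization holds for every optimal $x$, and not just one, I would argue directly as follows. Let $z^*$ denote the optimal value, so $z^*=\max_{x\in\conv\set{S}}(v-M^\top\gamma)^\top x+\gamma^\top b$. For feasible $x$,
\[
v^\top x=(v-M^\top\gamma)^\top x+\gamma^\top Mx\le \max_{\conv\set{S}}(v-M^\top\gamma)^\top y+\gamma^\top b=z^* .
\]
Equality holds exactly when both inequalities are tight, that is, when $x$ attains the Lagrangian maximum and $\gamma^\top(b-Mx)=0$. Since $\gamma\ge0$ and $b-Mx\ge0$, the latter is equivalent to $x\in\set{P}_=(\gamma)$. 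Existence of such a $\gamma$ is the standard strong duality statement: the Lagrangian dual $\min_{\gamma\ge0}\{\max_{\conv\set{S}}(v-M^\top\gamma)^\top x+\gamma^\top b\}$ attains the value $z^*$ because the problem is a feasible, bounded LP.

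Finally, intersect with $\set{X}$ and replace $\conv(\set{S})$ by $\set{S}$ in the inner argmax. For a point $\bar x\in\set{X}\subseteq\set{S}$, maximizing a linear function over $\conv(\set{S})$ is equivalent to maximizing it over $\set{S}$, since both maxima are equal. This gives exactly \eqref{eq:intersection-three-sets}.

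The main obstacle is the strong-duality step with only partial dualization: one must check that the dual minimum is attained and equals $z^*$. I would handle this by writing $\conv(\set{S})=\{x:Gx\le h\}$ and applying ordinary LP duality to the full system. The $M$-block of an optimal dual solution is then $\gamma$, and the $G$-block certifies optimality of the Lagrangian subproblem.
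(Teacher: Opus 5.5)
Your proposal is correct and follows essentially the same route as the paper: dualize only $Mx\le b$ in the LP over $\conv(\set{S})\cap\set{P}$ (via Assumption~\ref{assume:affine-restriction}), characterize the optimal points through tightness of the Lagrangian bound together with complementary slackness, and then replace $\conv(\set{S})$ by $\set{S}$ in the inner argmax. The differences are minor. You justify existence and attainment of the partial multiplier $\gamma$ through full LP duality with $\conv(\set{S})=\{x: Gx\le h\}$, whereas the paper simply takes an optimal multiplier. The paper's converse direction uses only weak duality, namely $\ell(\bar x)\ge \ell(x)\ge v^\top x$ for all $x\in\set{X}$.
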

\begin{pf}
First, suppose that $\bar x\in\mathop{\argmax}\limits_{x\in\set X}v^\top x$, and denote the optimal value by $\bar u$.  Then we also have that $\bar u =\max\limits_{x\in\conv(\set{X})}v^\top x$. By Assumption~\ref{assume:affine-restriction},  $\conv(\set{X})=\conv(\set S)\cap\set P$, which implies that
\begin{equation}\label{eq:convexified-problem}
	\begin{aligned}
			\bar u=\max\;\left\{ v^\top x: \; Mx\le b, x\in\conv(\set{S})\right\}.	
	\end{aligned}
\end{equation}
Let $\gamma\in\R_+^m$ be the optimal Lagrangian multipliers associated with the constraints $Mx\le b$ in \eqref{eq:convexified-problem}. Define the Lagrangian $\ell(x)=(v-M^\top \gamma)^\top x+b^\top\gamma$. It follows that 
\[ \bar u = \max_{x\in\conv(\set{S})}\ell(x)=\max_{x\in\set S}\ell(x), \]
implying $\bar x\in\mathop{\argmax}\limits_{x\in\set{S}}\ell(x)$. 
Moreover, complementary slackness implies that  $\bar x\in\set{P}_=(\gamma)$. Together with $\bar x\in\set{X}$, this yields~\eqref{eq:intersection-three-sets}.

We now prove the converse. Suppose that $\bar x$ satisfies \eqref{eq:intersection-three-sets}. Since $\bar x\in\set{P}_=(\gamma)$ implies $\gamma^\top(Mx-b)=0$, we have $\ell(\bar x)=v^\top \bar x$. Taking any $x\in\set X\subseteq\set{S}$, one can deduce from \eqref{eq:intersection-three-sets} that 
\[ \ell(\bar x)\ge \ell(x)=v^\top x-\gamma^\top(Mx-b)\ge v^\top x, \]
where the last inequality is due to $Mx\le b$ and $\gamma\ge0$. Because $\bar x\in\set{X}$ is feasible,  combining these relations proves that $\bar x$ is optimal for $\max\limits_{x\in\set{X}} v^\top x$.  
\end{pf}

Next, we consider a setting where in addition to Assumption~\ref{assume:affine-restriction}, we assume $\set{S}$ is standard comonotone. With this additional structure, Proposition~\ref{prop:eliminating-linear-constraints} enables us to reduce the analysis to linear optimization over $\set{S}$ and to derive the corresponding complexity results.  We present another application of Proposition~\ref{prop:eliminating-linear-constraints}  in Section~\ref{sec:disjoint-spca}.

As in the previous sections, we begin by  introducing a core set that captures the optimality conditions for cost vectors in a hyperplane arrangement cell to be defined later.
\begin{definition}
For any $c\in\R^r$, $\lambda\in\R$ and $\gamma\in\R^m$, define the set
\[ \Qaffine(c,\lambda,\gamma)\defeq \left\{x\in\R^n\left|\; \begin{aligned}
	x_{i} =0&  \text{ if } \left( A^\top c-M^\top \gamma \right)_i > \lambda\\
	x_{i} =1&  \text{ if } \left( A^\top c-M^\top \gamma\right)_i < \lambda
\end{aligned} \right.
\right\}.
\]
\end{definition}
We denote by $\textsf{LP}$ the worst-case time for both minimizing and maximizing $\displaystyle e^\top x$ over $\Qaffine(c,\lambda,\gamma)\cap\set{X}\cap\set{P}_=(\gamma)$.
\begin{lemma}\label{lem:opt-cond-aff-perm}
Suppose $\set{X}=\set{S}\cap\set{P}$ satisfies Assumption~\ref{assume:affine-restriction} and $\set{S}$ is standard comonotone. Then for all $c\in\R^r$, there exists $\gamma\in\R_+^m$ and $\lambda\in\R_+$ such that the following holds:
\begin{enumerate}[(1)]
	\item If  $\bar x\in\mathop{\argmax}\limits_{x\in\set{X}}c^\top Ax$, then $\bar x$ is either a minimizer or a maximizer of $e^\top x$ over $\Qaffine(c,\lambda,\gamma)\cap\set{X}\cap\set{P}_=(\gamma)$.
	\item Conversely, if $x^{\max}$ and $x^{\min}$ are any maximizer and minimizer of $e^\top x$ over $\Qaffine(c,\lambda,\gamma)\cap\set{X}\cap\set{P}_=(\gamma)$ respectively, then $\{x^{\max},x^{\min}\}$ contains an optimal solution to $\max\limits_{x\in\set{X}}c^\top Ax$.
\end{enumerate}
\end{lemma}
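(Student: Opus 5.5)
The plan is to combine Proposition~\ref{prop:eliminating-linear-constraints} with the threshold structure that standard comonotonicity imposes on binary maximizers. We then show that, on the resulting restricted set, the objective $c^\top Ax$ is an affine function of $e^\top x$ alone.

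\textbf{Step 1: reduce to a linear problem over $\set{S}$.} Fix $c\in\R^r$ and set $v=A^\top c$. Proposition~\ref{prop:eliminating-linear-constraints} gives $\gamma\in\R^m_+$ such that
\[
\argmax_{x\in\set{X}} v^\top x=\set{X}\cap\set{P}_=(\gamma)\cap\argmax_{x\in\set{S}} w^\top x,\qquad w\defeq v-M^\top\gamma .
\]
On $\set{P}_=(\gamma)$ we have $\gamma^\top(Mx-b)=0$. Hence $v^\top x=w^\top x+\gamma^\top b$ there, so on $\set{X}\cap\set{P}_=(\gamma)$ maximizing $v^\top x$ is the same as maximizing $w^\top x$.

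\textbf{Step 2: threshold structure of binary maximizers.} Since $\set{S}\subseteq\{0,1\}^n$ is standard comonotone, Theorem~\ref{thm:comonotone-symmetry}(iii) (or its necessary direction, if compactness is an issue) says that every $x^*\in\argmax_{\set S}w^\top x$ satisfies $(w_i-w_j)(x^*_i-x^*_j)\ge 0$ for all $i,j$. A binary vector with this property is a threshold vector: there is a level $\lambda$ such that all coordinates strictly on one side of $\lambda$ are $1$, all coordinates strictly on the other side are $0$, and the coordinates in the tie class $T=\{i:w_i=\lambda\}$ are free. The orientation in the definition of $\Qaffine$ has to be matched against this monotone direction, and I would fix that convention carefully at this point. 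Whatever the orientation, I would choose $\lambda\ge 0$ among the breakpoints of $w$ (the distinct values $w_i$, together with $0$), so that some optimal solution $\bar x$ lies in $\Qaffine(c,\lambda,\gamma)$.

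\textbf{Step 3: affine dependence on $e^\top x$.} On $\Qaffine(c,\lambda,\gamma)$, every coordinate outside $T$ is fixed, so
\[
w^\top x=\text{const}+\lambda\sum_{i\in T}x_i=\text{const}'+\lambda\, e^\top x .
\]
By Step 1, on $\Qaffine(c,\lambda,\gamma)\cap\set{X}\cap\set{P}_=(\gamma)$ the objective $c^\top Ax$ is therefore an affine function of $e^\top x$. Its maximizers over this set are exactly the maximizers of $e^\top x$ when $\lambda>0$ (or the minimizers, for the opposite orientation). When $\lambda=0$ the objective is constant on the set, so both $x^{\max}$ and $x^{\min}$ are optimal.

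\textbf{Step 4: conclude (1) and (2).} Since the set contains an optimal solution by Step 2, its optimal value equals the global optimum. This yields (2): one of $x^{\max}$, $x^{\min}$ attains it. For (1), every global maximizer lying in the set must then attain an extreme value of $e^\top x$.

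\textbf{Main obstacle.} The hard step is making a single choice of $\lambda$, independent of the particular optimal $\bar x$, that works for all maximizers simultaneously, as (1) requires. The difficulty is that two optimal threshold vectors could a priori split at different breakpoints of $w$. My first attempt would use Theorem~\ref{thm:comonotone-symmetry}(iii) together with optimality. If two maximizers threshold at different breakpoints, the coordinates with $w_i$ strictly between the two breakpoints must contribute zero to $w^\top x$, which forces those breakpoints to be $0$. This would identify a canonical $\lambda$: the unique nonzero breakpoint at which maximizers split, or $\lambda=0$ otherwise. It is also where the sign restriction $\lambda\in\R_+$ should enter.
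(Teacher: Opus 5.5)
\textbf{Steps 1--3 are the paper's route, but your fix for the ``main obstacle'' is false.} Steps 1--3 use Proposition~\ref{prop:eliminating-linear-constraints}, then the threshold structure of binary maximizers, then $c^\top Ax=\mathrm{const}+\lambda\,e^\top x$ on $\Qaffine(c,\lambda,\gamma)\cap\set{X}\cap\set{P}_=(\gamma)$. They correctly give (2) once $\lambda$ is chosen so that this set contains \emph{some} optimal solution. On orientation, use $x_i=1$ if $w_i>\lambda$ and $x_i=0$ if $w_i<\lambda$, with $w=A^\top c-M^\top\gamma$. The printed orientation is reversed: for $\set{S}=\{0,1\}^2$, $A=I$, $c=(1,-1)$, the unique optimum $(1,0)$ lies in no printed $\Qaffine$. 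In all examples here, $\set{P}=\R^n$ via $M=0$, $b=0$, so $\gamma$ plays no role and $w=A^\top c$.

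Two maximizers $x,y$ only satisfy $w^\top(x-y)=0$. So the coordinates where they differ cancel \emph{in total}, not individually, and nothing forces the breakpoints to be $0$. Take $\set{S}=\{x\in\{0,1\}^3: e^\top x\in\{1,3\}\}$, which is permutation-invariant and hence standard comonotone, with $A=I$ and $c=(2,1,-1)$. The maximizers are $(1,0,0)$ and $(1,1,1)$, both of value $2$. The first lies in $\Qaffine$ only for $\lambda\in[1,2]$, the second only for $\lambda\le -1$. So no single $\lambda$ serves all maximizers. Part (1), quantified as stated with $\gamma,\lambda$ fixed before $\bar x$, is false, and no argument will establish it. The paper's proof avoids your obstacle only because it picks $\lambda$ \emph{after} fixing one optimal $\bar x$.

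\textbf{Two further steps also fail.} First, in Step~4 the claim that every global maximizer in the set attains an extreme value of $e^\top x$ requires $\lambda\neq 0$. At $\lambda=0$, the default of your canonical rule, the restricted objective is constant. For $\set{S}=\{0,1\}^3$, $A=(1,0,0)$, $c=1$, the optimal point $\bar x=(1,1,0)$ lies in $\Qaffine$ only for $\lambda=0$. There the restricted set is $\{(1,a,b):a,b\in\{0,1\}\}$, and $e^\top\bar x=2$ is neither minimal nor maximal. So (1) fails even if $\lambda$ may depend on $\bar x$; the paper's derivation of (1) has the same hole.

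Second, the promise $\lambda\ge 0$ in Step~2 cannot be kept. For $\set{S}=\{(1,1)\}$, $A=I$, $c=(-1,-2)$, the unique optimum lies in $\Qaffine$ only for $\lambda\le -2$. The paper's proof itself treats both signs of $\lambda$.

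\textbf{What is true and usable.} There exist $\gamma\in\R^m_+$ and $\lambda\in\R$ such that $\Qaffine(c,\lambda,\gamma)\cap\set{X}\cap\set{P}_=(\gamma)$ contains an optimal solution and (2) holds. This is all that Proposition~\ref{prop:binary-almost-standard-comonotone} uses. Your Steps~1--3 prove exactly this if you fix one optimal $\bar x$, take $\lambda$ of either sign from its threshold, and drop the uniform-$\lambda$ argument and the sign restriction.
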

\begin{pf}		
Fix $c\in\R^r$ and let $\bar x\in\mathop{\argmax}\limits_{x\in\set{X}}c^\top Ax$. Applying Proposition~\ref{prop:eliminating-linear-constraints} to $v=A^\top c$, one can deduce that there exists $\gamma\in\R_+^m$ such that $\bar x$ is optimal for
\begin{equation}\label{eq:aux-lem-aff-res}
	\max_{x\in\set{S}}\left(A^\top c-M^\top\gamma\right)^\top x.
\end{equation}			
Moreover, since $\mathcal{S}$ is a binary standard comonotone set, \Cref{prop:nonneg-standard-comonotone} yields a threshold $\lambda\in\R$ such that $\bar x\in\Qaffine(c,\lambda,\gamma)$. Let $\mathcal{I}^==\{ i\in[n]:\,(A^\top c-M^\top\gamma)_i=\lambda \}$. Because for any $x\in \set{S}\cap \Qaffine(c,\lambda,\gamma)$, the coordinates $x_i$ are fixed for all $i\notin \set{I}^=$, the problem \eqref{eq:aux-lem-aff-res} is equivalent to 
\begin{equation*}
	\max_{x\in\set{S}\cap\Qaffine}\;\lambda\sum_{i\in\set{I}^=}x_i,
\end{equation*}	
which in turn  further amounts to 
\begin{equation}\label{eq:aux-lem-aff-res2}
	\max_{x\in\set{S}\cap\Qaffine}\;\lambda e^\top x.
\end{equation} Consequently, $\bar x$ is an optimal solution to \eqref{eq:aux-lem-aff-res2}. Depending on the sign of $\lambda$ and thanks to $\bar x\in\set{X}\cap\set{P}_=(\gamma)$, the first conclusion holds true.

To prove the second part, we assume WLOG that $\lambda\ge0$ since the other case can be proved similarly. Let $x^{\max}$ be a maximizer of $e^\top x$ over $\Qaffine\cap\set{X}\cap\set{P}_=(\gamma)$. Then $x^{\max}$ is also optimal for \eqref{eq:aux-lem-aff-res} by the equivalence above. Consequently, we deduce from Proposition~\ref{prop:eliminating-linear-constraints} and $x^{\max}\in\set{X}\cap\set{P}_=$ that $x^{\max}\in\mathop{\argmax}\limits_{x\in\set{X}}c^\top Ax$. 
\end{pf}

We now state the resulting complexity bound.
\begin{proposition}\label{prop:binary-almost-standard-comonotone}
Suppose $\set{X}=\set{S}\cap\set{P}$ satisfies Assumption~\ref{assume:affine-restriction} and $\set{S}$ is standard comonotone. Then problem~\eqref{eq:general-concave} can be solved in time $\mathcal{O}((n+m)^{r+m}\cdot(\textsf{T}_1+\textsf{LP}))$. 
\end{proposition}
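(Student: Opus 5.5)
The plan is to follow the three-step framework of Section~\ref{sec:poly}, but to carry out the space partition in the lifted parameter space $(c,\lambda,\gamma)\in\R^r\times\R\times\R^m$. Lemma~\ref{lem:opt-cond-aff-perm} plays the role that Lemma~\ref{lem:sol_perm_inv} played in Section~\ref{sec:inv}.

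\emph{Step 1.} By Proposition~\ref{prop:linear}, fix $c^*$ such that every maximizer of $\max_{x\in\set{X}}c^{*\top}Ax$ is optimal for \eqref{eq:general-concave}. Lemma~\ref{lem:opt-cond-aff-perm} then supplies $(\lambda^*,\gamma^*)$. For these parameters, the maximizer or the minimizer of $e^\top x$ over $\Qaffine(c^*,\lambda^*,\gamma^*)\cap\set{X}\cap\set{P}_=(\gamma^*)$ is optimal for the linear counterpart, and hence for \eqref{eq:general-concave}.

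\emph{Step 2.} Build the arrangement in $\R^{r+1+m}$. Use the $n$ hyperplanes
\[
\{(c,\lambda,\gamma):(A^\top c-M^\top\gamma)_i=\lambda\},\qquad i\in[n],
\]
and the $m$ coordinate hyperplanes $\{\gamma_j=0\}$, $j\in[m]$, for $n+m$ hyperplanes in total. On each cell the following are constant: the sets of indices with $(A^\top c-M^\top\gamma)_i$ greater than, less than, or equal to $\lambda$, and the support of $\gamma$. Consequently, within a cell $R_k$ the set $\Qaffine(c,\lambda,\gamma)$ is a fixed set $\Upsilon^k$ and $\set{P}_=(\gamma)$ is a fixed polyhedron $\set{P}^k$.

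By Lemma~\ref{lem:HA-counting}, with $p=n+m$ hyperplanes in dimension $q=r+1+m$, the number of cells is $\O((n+m)^{r+1+m})$. This already exceeds the target by one factor of $(n+m)$. To remove that factor I would use that all the hyperplanes pass through the origin, so Lemma~\ref{lem:HA-counting} gives the homogeneous bound $\O((n+m-1)^{r+m})=\O((n+m)^{r+m})$. If that does not suffice, a cone-scaling argument applies: $(c,\lambda,\gamma)\mapsto t(c,\lambda,\gamma)$ with $t>0$ preserves every cell, so one may intersect with a sphere or an affine slice.

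\emph{Step 3.} For each cell, solve the two linear problems $\max$ and $\min$ of $e^\top x$ over $\Upsilon^k\cap\set{X}\cap\set{P}^k$, at cost $\textsf{LP}$. This yields two candidate points, and hence two candidate supports, per cell. For each candidate support, call the oracle of Assumption~\ref{assume:supp} at cost $\textsf{T}_1$. Keep the best objective value over all calls.

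Correctness follows because the cell containing $(c^*,\lambda^*,\gamma^*)$ produces one of these candidates by Step~1. The support of that candidate is the support of some optimal solution, so the oracle call on it returns an optimal value. Building the arrangement costs $\O((n+m)^{r+m})$, which is dominated by the rest. The total is therefore $\O((n+m)^{r+m}(\textsf{T}_1+\textsf{LP}))$.

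The main obstacle is the dimension count in Step~2, namely getting exponent $r+m$ rather than $r+m+1$. The homogeneity of all $n+m$ hyperplanes is the key point there. A secondary subtlety is the ordering used in Lemma~\ref{lem:opt-cond-aff-perm}: it requires the fixed $\gamma$ of Proposition~\ref{prop:eliminating-linear-constraints}, and the tie-set $\set{I}^=$ is handled inside the LP rather than by enumerating supports. Since the oracle only needs one correct support, it is enough that the true $(c^*,\lambda^*,\gamma^*)$ lies in some cell, which is automatic.
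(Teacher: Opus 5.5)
Your proposal is correct and follows the paper's proof. It uses the same $n+m$ homogeneous hyperplanes in $\R^{r+m+1}$, the constancy of $\Qaffine(c,\lambda,\gamma)$ and $\set{P}_=(\gamma)$ on each cell, both the minimizer and the maximizer of $e^\top x$ per cell via Lemma~\ref{lem:opt-cond-aff-perm}, and the through-the-origin count $\O((n+m-1)^{r+m})$ from Lemma~\ref{lem:HA-counting}. The only difference is cosmetic: you take the cost vector from Proposition~\ref{prop:linear}, whereas the paper cites Proposition~\ref{prop:quasi-convex2linear}, and both are valid here because $\set{X}$ is finite.
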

\begin{pf}	
Consider the hyperplane arrangement $\mathcal{A}$ induced by 
\begin{align*}
	&H_i=\left\{ (c,\lambda,\gamma)\in\R^{r+m+1}:\,(A^\top c-M^\top\gamma)_i-\lambda=0 \right\}&\forall i\in[n]\\
	&H_j'=\left\{ (c,\lambda,\gamma)\in\R^{r+m+1}:\,\gamma_j=0 \right\}&\forall j\in[m].
\end{align*}
Then by construction,  for all tuples $(c,\lambda,\gamma)$ lying in the same hyperplane arrangement cell, the induced linear optimization problems
\begin{equation}\label{eq:either-min-or-max}
	\begin{aligned}
		\mathop{\min/\max}_{x\in\R^n}\;&e^\top x\\
		\text{s.t. }&x\in \Qaffine(c,\lambda,\gamma)\cap\set{X}\cap\set{P}_=(\gamma)
	\end{aligned}
\end{equation} share a common minimizer and a common maximizer. 

On the other hand, Proposition~\ref{prop:quasi-convex2linear} yields a cost vector $\bar c$ such that any optimal solution to 
\begin{equation*}
	\begin{aligned}
		\max\;& (A^\top \bar c)^\top x\\
		\text{s.t. }& x\in \set{X}=\set{S}\cap\set{P}
	\end{aligned}
\end{equation*}
is optimal for the original problem \eqref{eq:general-concave}. Combining this with Proposition~\ref{prop:eliminating-linear-constraints}  and Lemma~\ref{lem:opt-cond-aff-perm}, there exists $\bar\gamma\in\R^m$ and $\bar \lambda\in\R$ such that either the minimizer or the maximizer of \eqref{eq:either-min-or-max} associated with $(\bar c,\bar\lambda,\bar\gamma)$ is optimal for \eqref{eq:general-concave}.  Because $\set{A}$ gives rise to a partition of the parameter space $(c,\lambda,\gamma)$,   collecting the minimizers and maximizers  over all regions must include one optimal solution to \eqref{eq:general-concave}. Hence, the total number of candidate solutions is at most $2|\set{A}|$. The conclusion follows from \Cref{lem:HA-counting} that $|\set{A}|=\mathcal{O}((n+m)^{r+m})$.   
\end{pf}

\Cref{prop:binary-almost-standard-comonotone} shows that for almost binary standard comonotone feasible regions (i.e., with small $m$),  problem~\eqref{eq:general-concave} remains tractable.

\section{Applications}\label{sec:app}
The theoretical results developed in the previous sections are fairly general and  applicable to a broad range of problems, including the applications highlighted in Section~\ref{sec:intro}. Rather than presenting a long catalogue of examples, many of which follow directly once cast as instances of \eqref{eq:general-concave} (for example, matroid-constrained instances), we focus in this section on SPCA-related applications. This family offers a clean and representative setting for illustrating the implications of our general theory.
Beyond serving as an illustration, the analysis of these SPCA-related problems is of interest in its own right.

Throughout this section, we assume that the sample covariance matrix in question admits a rank-$r$ factorization $A^{\top}A\in\Re^{n\times n}$ with $A\in\Re^{r\times n}$. Note that $r$ is a fixed integer.
For each application, we either (i) match or improve the best-known computational complexity, or (ii) provide the first-known polynomial-time guarantee in fixed-rank settings. Table~\ref{tab:application} summarizes our results and compares them with the existing bounds.

\subsection{SPCA with a single component and its nonnegative variant}

Since the work of \cite{hotelling1933analysis}, principle component analysis (PCA) has been a widely-used tool for dimensionality reduction in statistics and machine learning, but its principal components typically involve all features, which can hinder interpretability and lead to unstable estimates.
SPCA addresses these issues by restricting the number of features used in each component \cite{jeffers1967two}. In contrast to PCA, which can be solved directly using eigenvalue decomposition, SPCA is \np-hard and even inapproximable in general \cite{magdon2017np}. 
However, in the fixed-rank setting, SPCA admits polynomial-time algorithms \cite{del2023sparse}.

This subsection studies SPCA with a single component and its nonnegative variant. The general SPCA problem is treated separately in \Cref{subsec:spca} because it requires a different analysis.
Formally, SPCA with a single component is defined as:
\begin{align}\label{eq:single_spca}
\max_{x\in \Re^{n}} \left\{\left\| A x\right\|_2^2: \|x\|^2_2= 1,\, \|x\|_0\le s \right\}, \tag{Single SPCA}
\end{align}
where $s\in [n]$.
It is evident that  \ref{eq:single_spca} fits within the framework of \eqref{eq:general-concave}, with the objective $f(Ax)=\| A x\|_2^2$. In this case, the rank of the function $f$ coincides with the rank $r$ of the covariance matrix $A^{\top}A$.
Moreover, the feasible set of \ref{eq:single_spca} is permutation- and sign-invariant with respect to $x$, which makes \Cref{prop:sign-inv-standard-comonotone} applicable. 

As shown in \cite{moghaddam2005spectral} (see also the proof of Lemma~\ref{lem:nnspca-subproblem} below), once the support of features is fixed, \ref{eq:single_spca} reduces to calculating the largest eigenvalue of an at most $s\times s$-sized principal submatrix of $A^{\top}A$. This reduction can be computed in $\textsf{T}_1=\mathcal{O}(s^2)$ time \cite{shishkin2019fast} and thus implies that \Cref{assume:supp} is satisfied.
Combining this result with \Cref{prop:sign-inv-standard-comonotone}, we obtain the following complexity bound for \ref{eq:single_spca}.
\begin{proposition}\label{prop:single_spca}
The following hold for \ref{eq:single_spca}:
\begin{enumerate}[(i)]
	\item There exists a collection of  $\mathcal{O}(n^{r})$ candidate supports for \ref{eq:single_spca}, among which at least one support is optimal.
	\item \ref{eq:single_spca} admits a polynomial-time algorithm with complexity $\O(n^{r} \cdot s^2)$.
\end{enumerate}
\end{proposition}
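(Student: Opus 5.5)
The plan is to verify that \ref{eq:single_spca} meets every hypothesis of \Cref{prop:sign-inv-standard-comonotone} and then take its conclusions. There are three checks: the problem is an instance of \eqref{eq:general-concave} with a convex rank-$r$ objective; the feasible set $\X=\{x\in\R^n:\|x\|_2^2=1,\ \|x\|_0\le s\}$ is sign-invariant and standard \comonotone; and \Cref{assume:supp} holds with $\textsf{T}_1=\O(s^2)$.

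\textbf{Structure of $\X$.} The objective $f(y)=\|y\|_2^2$ is convex on $\R^r$, and $A\in\R^{r\times n}$ has full row rank, so the problem has the form \eqref{eq:general-concave}. The set $\X$ is compact and nonempty because $s\ge 1$, so an optimizer exists. Both $\|x\|_2$ and $\|x\|_0$ are unchanged when coordinates are permuted or their signs flipped. Hence $\X$ is permutation-invariant and sign-invariant, and \Cref{prop:invariant} shows it is standard \comonotone.

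\textbf{Fixed-support oracle.} Fix $S\subseteq[n]$. If $|S|>s$, or $S=\emptyset$, the restricted problem \eqref{eq:assumption1} is infeasible, which we report directly. Otherwise the restricted problem is $\max\{x_S^\top (A^\top A)_{S,S}x_S:\|x_S\|_2=1,\ x_i\ne0\ \forall i\in S\}$. Its supremum is at most $\lambda_{\max}((A^\top A)_{S,S})$. This value is attained when some top eigenvector has full support on $S$. If none does, the supremum may not be attained.

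I would handle this by relaxing the oracle. For each candidate $S$, compute $\lambda_{\max}((A^\top A)_{S,S})$ and a unit eigenvector $u$. Then $u$ is feasible for \ref{eq:single_spca} with $\supp(u)\subseteq S$ and objective value $\lambda_{\max}((A^\top A)_{S,S})$. Since $\lambda_{\max}((A^\top A)_{S,S})$ upper-bounds every point with support exactly $S$, the best value over all candidates is at least the value at the optimal support in the list, and it is realized by a feasible $u$. This is the reduction of \cite{moghaddam2005spectral}. Each step costs $\textsf{T}_1=\O(s^2)$ by \cite{shishkin2019fast}, taking as given that that cost excludes forming the submatrix.

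\textbf{Conclusion.} Apply \Cref{prop:sign-inv-standard-comonotone}(i) to get $\O(n^r)$ candidate supports, one of them optimal; this is part (i). Running the oracle on each candidate gives total time $\O(n^r\cdot s^2)$, which is part (ii). The main obstacle I expect is the attainment and exact-support subtlety in the oracle step. It is resolved as described, because any optimal $S$ in the list contributes a value of at least the optimum through a feasible vector.
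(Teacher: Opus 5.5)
Your proposal is correct and follows the paper's route: you check that the feasible set is permutation- and sign-invariant (hence standard comonotone via \Cref{prop:invariant}), supply the $\O(s^2)$ largest-eigenvalue fixed-support routine, and invoke \Cref{prop:sign-inv-standard-comonotone}. Your relaxed oracle returns a top eigenvector of $(A^\top A)_{S,S}$, which is feasible with support contained in $S$ and value at least the restricted supremum; this handles the non-attainment issue more carefully than the paper's bare assertion that \Cref{assume:supp} holds, and it correctly guarantees that the best candidate is optimal.
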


\Cref{prop:single_spca} matches the best-known bound of \ref{eq:single_spca}  in \cite{asteris2014sparse}.

\subsubsection{Nonnegative SPCA}  
In many applications, it is natural to impose nonnegativity constraints on the loadings, leading to \emph{nonnegative PCA} and its sparse variant. Two common modeling motivations are (i) to reflect  domain constraints where the latent direction is inherently nonnegative (e.g., intensities, concentrations, gene expression, metabolite abundances) \cite{allen2011sparse,sigg2008expectation}, and (ii) to avoid components that rely on positive--negative cancellations (i.e., contrast directions) \cite{zass2006nonnegative}. By incorporating this additional structure, nonnegativity can often improve interpretability \cite{asteris2014nonnegative} and reduce the estimation error of underlying statistical models; see \cite{montanari2015non} and the literature therein for more details.

Even without sparsity constraints, the nonnegative PCA problem remains \np-hard, as it includes the matrix copositivity testing problem as a special case, which is known to be \np-complete \cite{murty1987some}. We show below that when the covariance matrix $A^\top A$ has a fixed rank, the nonnegative SPCA problem is polynomially solvable. 
Formally, Nonnegative SPCA \eqref{eq:nn_spca} is defined as 
\begin{align}\label{eq:nn_spca}
\max_{x\in \Re^{n}} \left\{\left\| A x\right\|_2^2: x\ge0,\,\|x\|^2_2= 1, \|x\|_0\le s \right\}. \tag{NN-SPCA}
\end{align}

Next, we show how to solve the fixed-support subproblem for \ref{eq:nn_spca}. Given a support set $\supp(x)=S\subseteq[n]$, \ref{eq:nn_spca} reduces to
\begin{equation}\label{eq:ppca}
\begin{aligned}
	\max_{x\in\R^n}\, \left\{ \| A x\|_2^2: \, \|x\|^2_2= 1,\,  x_i>0, \, \forall i\in S,\, x_i=0 ,\,\forall i\notin S\right\}.
\end{aligned}
\end{equation}
Because  the feasible region of \eqref{eq:ppca} is not closed, its optimal solution may not exist.
\begin{lemma}\label{lem:nnspca-subproblem}There exists a polynomial algorithm that produces a feasible solution $\bar x$ to \ref{eq:nn_spca}. Moreover, whenever \eqref{eq:ppca} attains the optimum, the returned $\bar x$ is optimal for \eqref{eq:ppca}.
\end{lemma}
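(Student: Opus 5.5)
Fix $S$ with $|S|=k$ and write $B=(A^\top A)_{S,S}$. Problem \eqref{eq:ppca} is then $\max\{z^\top Bz:\|z\|_2=1,\ z>0\}$ over $z\in\R^{k}$, after embedding $z$ into $\R^n$ by zero-padding. The plan is to compare it with its closure, the plain PCA problem $\max\{z^\top Bz:\|z\|_2=1\}=\lambda_{\max}(B)$. The sup of \eqref{eq:ppca} is at most $\lambda_{\max}(B)$. So \eqref{eq:ppca} attains its optimum with value $\lambda_{\max}(B)$ exactly when the eigenspace $E$ of $B$ for $\lambda_{\max}(B)$ contains a strictly positive vector. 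If $E$ contains no such vector, I will argue that the optimum is not attained at all. This gives the algorithm: compute $\lambda_{\max}(B)$ and a basis of $E$ (polynomial time), and decide whether $E\cap\{z>0\}\neq\emptyset$ by the linear program $\max\{t: z\in E,\ z\ge t e,\ e^\top z\le 1\}$. If $t^*>0$, return the normalized $z$ padded with zeros. Otherwise, return any feasible point of \ref{eq:nn_spca}, e.g. $e^{i}$ for a single $i$, which is feasible since $x\ge0$, $\|x\|_2=1$ and $\|x\|_0=1\le s$; this is acceptable because in that case \eqref{eq:ppca} has no optimum.

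The key step, and the main obstacle, is the claim: if $\bar z>0$ with $\|\bar z\|=1$ is a maximizer of \eqref{eq:ppca}, then $\bar z\in E$. The feasible set $\{z:\|z\|=1,\ z>0\}$ is relatively open in the unit sphere, so $\bar z$ is a local maximizer of $z^\top Bz$ on the sphere. The first-order (Lagrange) condition gives $B\bar z=\mu\bar z$ with $\mu=\bar z^\top B\bar z$, so $\bar z$ is an eigenvector. The second-order condition says $w^\top(B-\mu I)w\le0$ for all $w\perp\bar z$. Combined with $(B-\mu I)\bar z=0$, this shows $B-\mu I\preceq0$, hence $\mu=\lambda_{\max}(B)$ and $\bar z\in E$. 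Therefore, if no positive vector lies in $E$, no maximizer exists and the returned feasible point is legitimate. If a positive vector lies in $E$, it attains $\lambda_{\max}(B)$, which is the upper bound, so it is optimal for \eqref{eq:ppca}.

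The remaining checks are routine: the LP must detect strict positivity correctly (the scaling constraint $e^\top z\le 1$ keeps it bounded, and $t^*>0$ holds iff a positive vector lies in $E$), and the total cost is one eigendecomposition of a $k\times k$ matrix plus one LP of polynomial size. Both are polynomial, which verifies \Cref{assume:supp} for \ref{eq:nn_spca} in the algorithmic sense stated there.
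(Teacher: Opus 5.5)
Your proof is correct, but it takes a sharper route than the paper. The paper uses only the first-order KKT condition. An attained maximizer $\bar x>0$ of \eqref{eq:ppca} satisfies $A^\top A\bar x=\bar\lambda\bar x$, so the attained optimal value is the largest eigenvalue whose eigenspace meets the open positive orthant. The paper's algorithm therefore solves one LP per distinct eigenvalue and returns a normalized positive eigenvector for the largest eigenvalue that passes the test, falling back to $e^1$ only if none passes.

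You add the second-order necessary condition on the sphere, which forces $B-\mu I\preceq 0$, so every attained maximizer lies in the top eigenspace $E$. This gives an exact characterization: \eqref{eq:ppca} attains its optimum if and only if $E$ contains a strictly positive vector, and then the value is $\lambda_{\max}(B)$. It also needs only one eigenspace and one LP instead of up to $|S|$ of them. Further, it shows that whenever the paper's rule selects an eigenvalue below $\lambda_{\max}(B)$, \eqref{eq:ppca} has no optimum. In that case the paper's extra enumeration merely produces some feasible point, just as your fallback $e^i$ does.

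Your second-order step can even be made elementary. Suppose $\bar z>0$ were a unit eigenvector with $\mu<\lambda_{\max}(B)$, and let $u\perp\bar z$ be a unit top eigenvector. Then the points $\cos\theta\,\bar z+\sin\theta\,u$ stay feasible for small $\theta$ and have value $\mu\cos^2\theta+\lambda_{\max}(B)\sin^2\theta>\mu$. The paper's route, in exchange, needs nothing beyond first-order conditions.

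Your normalization $e^\top z\le 1$ is also slightly more robust than the paper's $e^\top x=1$. The latter makes the LP infeasible when the eigenspace lies in $e^{\perp}$, whereas yours always admits $z=0$.

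One caveat applies to both arguments. The positive branch returns a vector with $|S|$ nonzeros, so it is feasible for \ref{eq:nn_spca} only when $|S|\le s$. For larger $S$ you should simply return $e^i$; the paper's ``WLOG $S=[n]$'' leaves this implicit as well.
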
  
\begin{pf}
Without loss of generality, we assume $S=[n]$; otherwise we can substitute out $x_i=0\;\forall i\notin S$ and apply the same argument to the corresponding principle submatrix of  $A^\top A$.  For any optimal solution $\bar x>0$ to \eqref{eq:ppca}, it must satisfy the KKT conditions for smooth optimization problems involving open sets (see \cite[Section~11.5]{mangasarian1994nonlinear}), implying that there exists $\bar \lambda\in\R$ such that 
\[ \nabla_x L(\bar x, \bar \lambda)=0, \]
where  $L(x,\lambda)=\|A x\|_2^2-\lambda\|x\|_2^2$. Thus, one has
\begin{equation}\label{eq:eigen-value}
	A^\top A \bar x=\bar \lambda \bar x, \text{ and } \bar x>0,
\end{equation}
which implies that $(\bar x, \bar \lambda)$ is a pair of eigenvector and eigenvalue of $A^{\top} A$. For each eigenvalue $\lambda$ of $A^\top A$, let $V(\lambda)\defeq\{ x\in\R^n:\, A^\top A x=\lambda  x\}$ denote the  associated eigenspace. Then by \eqref{eq:eigen-value}, solving \eqref{eq:ppca} boils down to finding the largest eigenvalue $\lambda$ such that $V(\lambda)\cap\R_{++}^n\neq\emptyset$. 

To test whether $V(\lambda)$ contains a strictly positive vector, consider the linear program
\begin{equation}\label{eq:LP-eigen}
	\begin{aligned}
		v(\lambda)\defeq\max_{t\in \Re,x\in \Re^n} \left\{t:\, x_i\ge t,\;\forall i\in [n], \,e^{\top}x= 1,\, x\in V(\lambda)\right\},
	\end{aligned}
\end{equation}
where the constraint $e^{\top}x= 1$ ensures a finite optimal value. For \eqref{eq:LP-eigen}, it is clear that $v(\lambda)>0$ if and only if $V(\lambda)\cap\R_{++}^n\neq\emptyset$. 
If $A^{\top} A$ admits an eigenvalue $\lambda$ satisfying $v(\lambda)>0$, let $\bar\lambda$ be the largest such eigenvalue, and let $x(\bar \lambda)$ be an optimal solution to \eqref{eq:LP-eigen} at $\bar \lambda$.
Then $\bar x \;\defeq\; {x(\bar\lambda)}/{\|x(\bar\lambda)\|_2}$ is feasible for \ref{eq:nn_spca}, and it is optimal for \eqref{eq:ppca} whenever \eqref{eq:ppca} attains an optimum. 
If $v(\lambda)\le0$ for all eigenvalues of $A^{\top}A$, then the optimal solution to \eqref{eq:ppca} does not exist. 
In this case, we set $\bar x$ as the first coordinate vector, which is trivially feasible for \ref{eq:nn_spca}.   

Since $A^\top A$ has at most $n$ distinct eigenvalues and each instance of \eqref{eq:LP-eigen} is a linear program, the procedure runs in polynomial time. 
\end{pf}

In parallel to \Cref{prop:single_spca}, we obtain the following complexity bound for \ref{eq:nn_spca}.
\begin{proposition}\label{prop:nnspca}
The following hold for \ref{eq:nn_spca}:
\begin{enumerate}[(i)]
	\item There exists a collection of  $\mathcal{O}(n^{r})$ candidate supports for \ref{eq:nn_spca}, among which at least one support is optimal.
	\item \ref{eq:nn_spca} admits a polynomial-time algorithm with complexity $\O\left(n^{r} \cdot \textsf{T}_1\right)$, where $\textsf{T}_1$ is the running time of the fixed-support routine in Lemma~\ref{lem:nnspca-subproblem}.
\end{enumerate}
\end{proposition}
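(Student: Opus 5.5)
The plan is to invoke \Cref{prop:nonneg-standard-comonotone} with the feasible set $\set{X}=\{x\in\R^n:\,x\ge0,\ \|x\|_2^2=1,\ \|x\|_0\le s\}$ and objective $f(y)=\|y\|_2^2$, which is convex on $\R^r$ with rank $r$. There are three things to check: that $\set{X}$ is nonnegative, that it is standard \comonotone, and that \Cref{assume:supp} holds with the routine of \Cref{lem:nnspca-subproblem}. Nonnegativity is built into the definition of $\set{X}$.

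For standard comonotonicity I would use \Cref{prop:invariant}. The constraints $x\ge0$, $\|x\|_2^2=1$ and $\|x\|_0\le s$ are all preserved when coordinates are permuted, so $\set{X}$ is permutation-invariant. Hence $\set{X}$ is standard \comonotone. This route needs no compactness, although $\set{X}$ is in fact compact as a closed subset of the unit sphere, so the linear counterpart \eqref{eq:linear} always attains its optimum. The reduction in \Cref{prop:linear} also requires the set of global optimizers of \ref{eq:nn_spca} to be nonempty, and this follows because a continuous function is maximized over a compact set. Part (i) is then exactly \Cref{prop:nonneg-standard-comonotone}(i), which gives $\O(n^r)$ candidate supports, at least one of which is the support of an optimal solution.

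For part (ii) I need to plug \Cref{lem:nnspca-subproblem} in as the oracle of \Cref{assume:supp}. This is the main obstacle, because the lemma does not return optimal solutions of the fixed-support problems \eqref{eq:ppca} in general. For a given support $S$ it returns a feasible point $\bar x$ of \ref{eq:nn_spca}, and this point is guaranteed optimal for \eqref{eq:ppca} only when \eqref{eq:ppca} attains its optimum.

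This is still enough, by the following argument.
\begin{enumerate}
\item Run the routine on every candidate support $S_\ell$ and return the best of the resulting feasible points.
\item Let $x^*$ be an optimal solution of \ref{eq:nn_spca} whose support is among the candidates, say $\supp(x^*)=S_{\bar\ell}$.
\item Then \eqref{eq:ppca} with $S=S_{\bar\ell}$ attains its optimum at $x^*$.
\item Therefore the routine returns a point $\bar x$ with $\|A\bar x\|_2^2=\|Ax^*\|_2^2$.
\item Every other returned point is feasible for \ref{eq:nn_spca}, so none can beat $x^*$, and the best returned point is globally optimal.
\end{enumerate}
Each call costs $\textsf{T}_1$, which is polynomial: one eigendecomposition plus at most $s$ linear programs \eqref{eq:LP-eigen}. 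The total time is $\O(n^r\cdot\textsf{T}_1)$, plus the cost of enumerating the $\O(n^r)$ cells, which the arrangement bound of \Cref{lem:HA-counting} also keeps within $\O(n^r)$.
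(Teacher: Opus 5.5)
Your proposal is correct and follows the paper's route. The paper's proof simply notes that the feasible set of \ref{eq:nn_spca} is nonnegative and standard \comonotone, so \Cref{prop:nonneg-standard-comonotone} applies with \Cref{lem:nnspca-subproblem} in place of \Cref{assume:supp}; your five-step argument spells out why the lemma's weaker guarantee suffices. One small point: call the routine only on candidate supports with $|S|\le s$, because the lemma's feasibility claim (which your step~5 relies on) implicitly assumes this, and a positive eigenvector on a larger support would violate the sparsity constraint.
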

\begin{pf}
Because the feasible region of \ref{eq:nn_spca} is nonnegative and standard comonotone, one can use \Cref{lem:nnspca-subproblem} in place of Assumption~\ref{assume:supp} and follow the same argument as in \Cref{prop:single_spca}. 
\end{pf}

As a corollary of \Cref{prop:nnspca}, we obtain the following result for nonnegative PCA (i.e., without the sparsity constraint). 
\begin{corollary}\label{cor:nnpca}
The nonnegative PCA problem
\begin{align}\label{eq:nn_pca}
	\max_{x\in \Re^{n}} \left\{\left\|A x\right\|_2^2: x\ge0,\,\|x\|_2^2= 1 \right\}. \tag{NN-PCA}
\end{align}
can be solved in polynomial time when the rank of $A$ is fixed.
\end{corollary}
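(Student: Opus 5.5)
The plan is to reduce NN-PCA to NN-SPCA with the trivial sparsity level $s=n$. The constraint $\|x\|_0\le n$ holds for every $x\in\R^n$, so NN-PCA is exactly the instance of \ref{eq:nn_spca} with $s=n$. \Cref{prop:nnspca} then applies directly. Its hypotheses on the feasible region are nonnegativity and standard comonotonicity. For standard comonotonicity, the set $\{x\ge0:\|x\|_2^2=1\}$ is permutation-invariant, since permuting coordinates preserves both the sign constraints and the Euclidean norm. Hence it is standard comonotone by \Cref{prop:invariant}, and it is clearly nonnegative.

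Next, I would confirm that the fixed-support oracle is available. \Cref{lem:nnspca-subproblem} provides, for any support $S$, a polynomial routine that returns a feasible point, and that point is optimal for \eqref{eq:ppca} whenever the latter attains its optimum. The routine performs an eigendecomposition of the principal submatrix $(A^\top A)_{S,S}$ and solves at most $|S|\le n$ linear programs \eqref{eq:LP-eigen}, so $\textsf{T}_1$ is polynomial in $n$.

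The main point to check is that the global optimum of NN-PCA exists and that its support appears among the $\O(n^r)$ candidates. Existence follows because the feasible set is compact and the objective is continuous. The support-generation argument of \Cref{prop:nnspca} (through \Cref{prop:linear} and the single-threshold lifted arrangement \eqref{eq:HA-threshold-nn}) then guarantees that some candidate $S$ is the support of an optimal $x^*$. For that $S$, problem \eqref{eq:ppca} attains its optimum at $x^*$, so the oracle returns a point that is globally optimal. For every other candidate, the oracle still returns a feasible point.

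Taking the best of the $\O(n^r)$ returned feasible points therefore yields an optimal solution. The total running time is $\O(n^r\cdot\textsf{T}_1)$, which is polynomial for fixed $r$.
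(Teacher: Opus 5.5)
Your proposal is correct and is exactly the paper's argument: NN-PCA is the instance $s=n$ of \ref{eq:nn_spca}, so \Cref{prop:nnspca} applies directly. Your additional checks make explicit what the paper's one-line proof leaves implicit: standard comonotonicity via permutation invariance (\Cref{prop:invariant}), existence of an optimum by compactness, and taking the best of the feasible points returned by \Cref{lem:nnspca-subproblem} to handle candidate supports where \eqref{eq:ppca} has no optimum.
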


The proof of \Cref{cor:nnpca} follows by setting $s=n$ in \ref{eq:nn_spca}.

\subsection{Variable selection for two-sample tests}
Two-Sample Tests (2ST) aim to determine whether two collections of samples are drawn from the same distribution, and they have found broad applications in bioinformatics, finance, healthcare, and machine learning \cite{gretton2012kernel,stein1945two}.
To enhance both statistical efficiency and interpretability,   \citet{wang2023variable} recently propose selecting a subset of informative variables to conduct 2ST based on the maximum mean discrepancy statistic. This leads to the optimization problem:
\begin{align}\label{eq:tst}
\max_{x\in \Re^{n}} \left\{\|A x\|_2^2 + a^{\top}x: \|x\|^2_2= 1, \|x\|_0\le s \right\}. \tag{2ST}
\end{align}

It is evident that \ref{eq:tst} and \ref{eq:single_spca} share the same feasible set. In fact, \ref{eq:single_spca} is a special case of \ref{eq:tst}: the only difference is the additional linear term  $a^{\top}x$ in the objective, which increases the 
the objective rank to $r+1$. More importantly, this term breaks the eigenvalue-based structure of \ref{eq:single_spca}. As a result,  existing complexity results (see, e.g., \cite{asteris2014sparse,del2023sparse}), which rely on eigenvalue properties, do not extend to \ref{eq:tst}.  Our general analysis framework can easily accommodate this setting. Observe that for any fixed support $S\subseteq [n]$, the corresponding subproblem for \ref{eq:tst} is a trust-region-type subproblem, which can be solved  by first computing an eigen-decomposition of $(A^\top A)_{S,S}$ and then solving a one-dimensional secular equation; see \cite{more1983computing} or Section~4.3 of  \cite{nocedal2006numerical} for details. Since the eigen-decomposition dominates the cost, the fixed-support subproblem can be solved in $\mathcal{O}(s^3)$ time.

As a direct application of \Cref{prop:sign-inv-standard-comonotone}, we obtain the first polynomial-time complexity bound for \ref{eq:tst} when $r$ is fixed.

\begin{proposition}
The following hold for \ref{eq:tst}: 
\begin{enumerate}[(i)]
	\item There exists a collection of  $\mathcal{O}(n^{r+1})$ candidate supports for \ref{eq:tst}, among which at least one support is optimal.
	\item \ref{eq:tst} admits a polynomial-time algorithm with complexity $\O(n^{r+1} \cdot s^3)$.
\end{enumerate}
\end{proposition}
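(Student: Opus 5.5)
The plan is to cast \ref{eq:tst} as an instance of \eqref{eq:general-concave} of rank $r+1$ and then apply \Cref{prop:sign-inv-standard-comonotone}. Set $\tilde A=\begin{bmatrix}A\\ a^\top\end{bmatrix}\in\R^{(r+1)\times n}$ and $\tilde f(y,t)=\|y\|_2^2+t$ for $(y,t)\in\R^r\times\R$. Then the objective of \ref{eq:tst} equals $\tilde f(\tilde A x)$, and $\tilde f$ is convex. If $a$ lies in the row space of $A$, the rank drops, and we may either reduce $\tilde A$ to a full-row-rank matrix of rank $r$ or simply note that the arrangement argument never uses full row rank, only the number of rows. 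So the effective rank is at most $r+1$.

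The feasible region $\set{X}=\{x:\|x\|_2^2=1,\ \|x\|_0\le s\}$ is identical to that of \ref{eq:single_spca}. It is permutation-invariant, hence standard comonotone by \Cref{prop:invariant}, and it is clearly sign-invariant, since both $\|x\|_2$ and $\|x\|_0$ depend only on $|x|$. The set $\set{X}$ is compact, so the optimum of \ref{eq:tst} is attained and the standing assumption holds.

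\Cref{prop:sign-inv-standard-comonotone}(i), applied with rank $r+1$, then gives $\O(n^{r+1})$ candidate supports, at least one of which is optimal. This is statement (i).

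For (ii) we must verify \Cref{assume:supp} with $\textsf{T}_1=\O(s^3)$. Only supports with $|S|\le s$ matter, because larger supports are infeasible and can be discarded in $\O(1)$. For such an $S$ the subproblem is
\[
\max\Bigl\{x_S^\top (A^\top A)_{S,S}x_S+a_S^\top x_S:\ \|x_S\|_2=1,\ x_i\neq0\ \forall i\in S\Bigr\}.
\]
This is a trust-region-type problem with the extra open constraints $x_i\neq0$.

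These open constraints are the main subtlety. I would handle them by solving the closed relaxation $\max\{x_S^\top (A^\top A)_{S,S} x_S+a_S^\top x_S:\|x_S\|_2=1\}$ through an eigendecomposition of $(A^\top A)_{S,S}$ together with the secular equation, which costs $\O(s^3)$. The optimal value over the closed relaxation is an upper bound. Any maximizer $\hat x$ of the relaxation is feasible for \ref{eq:tst}, since $\|\hat x\|_0\le|S|\le s$, and it is supported on a subset of $S$.

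Taking the best such $\hat x$ over all candidate supports therefore gives a feasible point whose value is at least the value on the optimal support $S^*$. That value is in turn at least the optimum of \ref{eq:tst}. So the relaxed routine suffices in the algorithmic sense of \Cref{assume:supp}: it returns an optimum whenever one exists, and it never produces an infeasible point.

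Multiplying the $\O(n^{r+1})$ candidate supports by $\textsf{T}_1=\O(s^3)$, as in Part~II of \Cref{them:supp}, gives the $\O(n^{r+1}\cdot s^3)$ bound in (ii).
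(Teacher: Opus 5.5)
Your proposal is correct and follows the paper's own route. You append $a^\top$ to $A$ to get a rank-$(r+1)$ convex objective over the permutation- and sign-invariant feasible set of \ref{eq:single_spca}, invoke \Cref{prop:sign-inv-standard-comonotone}, and solve each fixed-support subproblem as a trust-region problem via an eigendecomposition plus a secular equation in $\O(s^3)$ time. Your treatment of the open constraints $x_i\neq 0$ is a valid tightening of a detail the paper glosses over: you solve the closed relaxation over $\{x:\supp(x)\subseteq S\}$ and keep the best returned feasible point.
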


\subsection{General SPCA} \label{subsec:spca}
In this subsection, we study the general SPCA problem with multiple principal components (see \cite{dey2023solving,vu2012minimax}):
\begin{align}\label{eq:spca}
\max_{U\in \Re^{n\times d}} \left\{\left\| A U\right\|_F^2: U^{\top}U= I, \|U\|_0\le s \right\}, \tag{SPCA}
\end{align}
where the \textit{row-sparsity} constraint $\|U\|_0\le s$ enforces the matrix $U$ to contain  at most $s$ nonzero rows.  \ref{eq:spca} is also more specifically  referred to as \emph{Row-Sparse PCA} in the literature \cite{dey2023solving,li2024beyond}. 

Since the general \ref{eq:spca} problem  does not admit a standard comonotone feasible region, the results in \Cref{sec:inv} cannot be invoked verbatim as in \ref{eq:single_spca}. Nevertheless, as shown in \citet{li2024beyond},  \ref{eq:spca} can be reformulated as the following convex maximization problem over a binary permutation-invariant set:
\begin{align}\label{eq:spca_binary}
\max_{x\in \{0,1\}^n} \bigg\{ \bigg\| \sum_{i\in [n]} x_i a_ia_i^{\top} \bigg\|_{(d)}: \sum_{i\in [n]} x_i\le s \bigg\}, 
\end{align}
where for each $i\in [n]$, $a_i\in \Re^r$ represents the $i$th column vector of $A$, and  $\|\cdot\|_{(d)}$ is a convex function given by the sum of the $d$ largest eigenvalues of its matrix argument.  \citet{shishkin2019fast} show that $\|\cdot\|_{(d)}$ for an $s\times s$ symmetric matrix can be computed in time $\textsf{T}_1=\O(ds^2)$. Note that \eqref{eq:spca_binary} naturally satisfies \Cref{assume:supp} because its feasible region is a binary set.
Moreover, since  $\sum_{i\in [n]} x_i a_ia_i^{\top}\in \Re^{r\times r}$ is symmetric and depends linearly on $x$, the objective in \eqref{eq:spca_binary} is  convex and has rank $(r^2+r)/2$.
Consequently, problem \eqref{eq:spca_binary} falls into the nonnegative and standard comonotone setting with rank $(r^2+r)/2$, and thus its complexity result immediately follows from \Cref{prop:nonneg-standard-comonotone}.

\begin{proposition}\label{prop:spca}
The following hold for \ref{eq:spca}:
\begin{enumerate}[(i)]
	\item There exists a collection of  $\mathcal{O}(n^{(r^2+r)/2})$ candidate supports for \ref{eq:spca}, among which at least one support is optimal.
	\item  \ref{eq:spca} admits a polynomial-time algorithm with complexity $\O(n^{(r^2+r)/2} \cdot ds^2)$.
\end{enumerate}
\end{proposition}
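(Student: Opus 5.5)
The plan is to apply \Cref{prop:nonneg-standard-comonotone} to the binary reformulation \eqref{eq:spca_binary} and then transfer the result back to \ref{eq:spca}. There are four ingredients to verify.

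\textbf{Step 1: equivalence.} I take from \cite{li2024beyond} that \ref{eq:spca} and \eqref{eq:spca_binary} have the same optimal value. Moreover, an optimal binary $x$ with support $S$ yields an optimal $U$ whose rows outside $S$ vanish: take $U_{S,:}$ to be the top-$d$ eigenvectors of $(A^\top A)_{S,S}$. Conversely, the row support of any feasible $U$ gives a feasible $x$ with objective at least $\|AU\|_F^2$. Hence a collection of candidate supports for \eqref{eq:spca_binary} that contains an optimal support is also a collection of candidate row supports for \ref{eq:spca}.

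\textbf{Step 2: cast \eqref{eq:spca_binary} in the form \eqref{eq:general-concave}.} Define the linear map $x\mapsto \sum_i x_i a_ia_i^\top$ into the space of $r\times r$ symmetric matrices. This space has dimension $(r^2+r)/2$. Vectorizing its upper triangle gives a matrix $\tilde A\in\R^{\frac{r^2+r}{2}\times n}$ whose $i$th column is $\mathrm{svec}(a_ia_i^\top)$. The function $f=\|\cdot\|_{(d)}$ is convex, since it is a maximum of linear functions $\langle V V^\top,\cdot\rangle$, so the objective is $f(\tilde A x)$ with rank $(r^2+r)/2$. If $\tilde A$ lacks full row rank, I restrict to its row space. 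This only lowers the effective rank, and the arrangement counting bound of \Cref{lem:HA-counting} still holds with the nominal dimension as an upper bound.

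\textbf{Step 3: structure of the feasible set.} The feasible region $\{x\in\{0,1\}^n: e^\top x\le s\}$ is permutation-invariant, so it is standard comonotone by \Cref{prop:invariant}, and it is clearly nonnegative. It is binary, so \Cref{assume:supp} holds: each support determines $x$, and evaluation costs $\textsf{T}_1=\O(ds^2)$ by \cite{shishkin2019fast}. For this I use that the nonzero eigenvalues of $\sum_{i\in S}a_ia_i^\top=A_{:,S}A_{:,S}^\top$ coincide with those of the $|S|\times|S|$ matrix $(A^\top A)_{S,S}$. \Cref{prop:nonneg-standard-comonotone} then gives $\O(n^{(r^2+r)/2})$ candidate supports and total time $\O(n^{(r^2+r)/2}\cdot ds^2)$. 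Reconstructing $U$ from the best support adds only a lower-order eigendecomposition cost.

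\textbf{Main obstacle.} The delicate point is Step 1: making sure that optimality transfers at the level of supports, not merely optimal values. I would handle it by the explicit construction above and by noting that a $U$ with row support $T\subseteq S$ is dominated by the binary point for $S$. The second, minor concern is the full-row-rank requirement, which I address by the projection argument in Step 2.
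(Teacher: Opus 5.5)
Your proposal is correct and follows the paper's own argument. You cast \eqref{eq:spca_binary} as a rank-$(r^2+r)/2$ convex maximization over the nonnegative, permutation-invariant (hence standard comonotone) binary set $\{x\in\{0,1\}^n: e^\top x\le s\}$, check \Cref{assume:supp} with $\textsf{T}_1=\O(ds^2)$, and invoke \Cref{prop:nonneg-standard-comonotone}; your extra checks (support-level transfer, possible rank deficiency of $\tilde A$, and the eigenvalue identity behind $\textsf{T}_1$) go beyond what the paper writes, with one small nit: if an optimal support has $|S|<d$, pad it with arbitrary indices up to size $d$ before building $U$ (feasibility forces $d\le s$), which cannot decrease the objective.
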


Compared with the support complexity $\O(n^{\min\{d, r\}(r^2+r)})$ derived in \cite{del2023sparse}, our new bound  $\O(n^{(r^2+r)/2})$ in \Cref{prop:spca} is independent of the number of principal components $d$ and cuts the exponent at least in half.

\subsection{Disjoint SPCA}\label{sec:disjoint-spca}
In this subsection, we study another variant of \ref{eq:spca} where the principal components are sparse and have disjoint supports. As originally proposed by \cite{asteris2015sparse}, we consider
\begin{align}\label{eq:ds_spca}
\max_{
	\begin{subarray}{l}
		Z\in \{0,1\}^{n\times d}:  \\
		Ze\le e, \;Z^{\top} e \le s
	\end{subarray} 
}\;\max_{U\in \Re^{n\times d}} \left\{\left\|A U\right\|_F^2: U^{\top}U= I_d,\, U_{ij} (1-Z_{ij})=0, \forall i\in [n],j\in [d] \right\}, \tag{Disjoint SPCA}
\end{align}
where each column of the binary matrix variable $Z\in \{0,1\}^{n\times d}$
encodes the support of each principal component, and the vector $s\in\mathbb{Z}_+^d$ specifies the sparsity budgets of the $d$ components. The constraints $Ze\le e$ model that the supports of components are disjoint. We note that \ref{eq:single_spca} is also a special case of \ref{eq:ds_spca} at $d=1$. We first reformulate \ref{eq:ds_spca} as a binary  optimization problem. %
\begin{lemma}
\ref{eq:ds_spca} is equivalent to
\begin{equation}\label{eq:ds_binary_spca}
	\begin{aligned}
		\max_{Z\in\{0,1\}^{n\times(d+1)}}\;\Bigg\{\sum_{j\in [d]} \lambda_{\max} \bigg(\sum_{i\in [n]} Z_{ij} a_ia_i^{\top}\bigg):&\, \sum_{j=1}^{d+1} Z_{ij}=1,\forall i\in[n],
		&\sum_{i=1}^n Z_{ij}\le s_j, \forall j\in[d]\Bigg\}.
	\end{aligned}
\end{equation}
\end{lemma}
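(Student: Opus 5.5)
We prove the equivalence by fixing the support pattern $Z$, solving the inner maximization over $U$ in closed form, and then translating the outer feasible set into the $(d+1)$-column assignment form.

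\textbf{Step 1: the inner problem for a fixed $Z$.} Fix a feasible $Z\in\{0,1\}^{n\times d}$ of \ref{eq:ds_spca}, and let $S_j=\{i:Z_{ij}=1\}$ be the support of column $j$. The constraint $Ze\le e$ makes the sets $S_1,\dots,S_d$ pairwise disjoint. Columns $u_j$ of $U$ supported on disjoint sets are automatically orthogonal. Hence, apart from the constraints $\supp(u_j)\subseteq S_j$, the only remaining requirement in $U^\top U=I_d$ is $\|u_j\|_2=1$ for each $j$, and the objective separates as $\|AU\|_F^2=\sum_{j\in[d]}\|Au_j\|_2^2$.

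Each summand is now maximized independently:
\[
\max\bigl\{\|Au_j\|_2^2:\ \|u_j\|_2=1,\ \supp(u_j)\subseteq S_j\bigr\}=\lambda_{\max}\bigl((A^\top A)_{S_j,S_j}\bigr)=\lambda_{\max}\bigl(A_{S_j}^\top A_{S_j}\bigr).
\]

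\textbf{Step 2: pass to the $r\times r$ matrix.} The nonzero eigenvalues of $A_{S_j}^\top A_{S_j}$ coincide with those of $A_{S_j}A_{S_j}^\top=\sum_{i\in S_j}a_ia_i^\top=\sum_{i\in[n]}Z_{ij}a_ia_i^\top$. Both matrices are PSD, so their largest eigenvalues agree, including the degenerate case where the largest eigenvalue is $0$. If some $S_j=\emptyset$, the inner problem is infeasible as literally stated, since a unit vector cannot have empty support. We must therefore check that the convention used in \eqref{eq:ds_binary_spca} handles this, where $\lambda_{\max}(0)=0$. One option is to note that \ref{eq:ds_spca} is implicitly restricted to $Z$ with all columns nonempty. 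Another is to reassign an empty column one index taken from the slack: if $n\ge d$, some index is available, because either a column has size at least two or some row is unused. Such a reassignment never decreases the objective, because $\lambda_{\max}$ of a PSD sum is monotone in the index set. This empty-column case is the only delicate point, and this monotonicity argument is how I would dispose of it.

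\textbf{Step 3: the assignment form.} Append the column $Z_{i,d+1}=1-\sum_{j\in[d]}Z_{ij}\in\{0,1\}$. The resulting map is a bijection between $\{Z\in\{0,1\}^{n\times d}: Ze\le e,\ Z^\top e\le s\}$ and the feasible set of \eqref{eq:ds_binary_spca}. Under this bijection, $\sum_{j\le d+1}Z_{ij}=1$ encodes $Ze\le e$, and $\sum_i Z_{ij}\le s_j$ encodes $Z^\top e\le s$. The objective of \eqref{eq:ds_binary_spca} sums only over $j\in[d]$, so the dummy column contributes nothing. Combining Steps 1 and 2, the objective values agree, and optimal solutions correspond.
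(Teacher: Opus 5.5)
Your argument is essentially the paper's proof. Disjointness reduces $U^\top U=I_d$ to $\|u_j\|_2=1$, the inner problem splits into $d$ Rayleigh-quotient problems with value $\lambda_{\max}\bigl(\sum_{i}Z_{ij}a_ia_i^\top\bigr)$, and the slack column $Z_{\cdot,d+1}$ turns $Ze\le e$ into the assignment constraints. The paper cites \cite{li2025exact} for the eigenvalue identity, whereas you derive it directly from the fact that $A_S^\top A_S$ and $A_SA_S^\top$ share their nonzero eigenvalues.

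The paper never mentions empty columns, so handling them is a worthwhile addition, but your justification fails in one subcase. Suppose no row is unused and you move an index $i$ from a column $k$ with $|S_k|\ge 2$ into the empty column $j$. Monotonicity of $\lambda_{\max}$ in the index set only says that column $k$'s value weakly decreases, which is the wrong direction for your claim. What you need is subadditivity (Weyl's inequality):
\[
\lambda_{\max}\bigl(M+a_ia_i^\top\bigr)\le\lambda_{\max}(M)+\|a_i\|_2^2,\qquad M=\sum_{i'\in S_k\setminus\{i\}}a_{i'}a_{i'}^\top .
\]
This shows column $k$ loses at most $\|a_i\|_2^2$, while column $j$ gains exactly $\lambda_{\max}(a_ia_i^\top)=\|a_i\|_2^2$. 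Your monotonicity argument does cover the other subcase, where an unused row is moved.

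You also need $s_j\ge 1$ for the receiving column. This is necessary anyway for \ref{eq:ds_spca} to be feasible.

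With this fix, repeating the move removes all empty columns. The optimal values then coincide, and any optimal $Z$ of \eqref{eq:ds_binary_spca} yields an optimal solution of \ref{eq:ds_spca}. Note that optimal solutions do not literally correspond one-to-one, as your last sentence claims.
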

\begin{pf}
In \ref{eq:ds_spca}, the matrix $U$ has disjoint column supports, which simplifies the orthonormal constraint $U^{\top}U=I_d$ to $U_j^{\top}U_j=1$ for all $j\in [d]$, where $U_j$ is denotes the $j$th column of $U$. Consequently,
the inner maximization problem over $U$ decomposes into  $d$ independent subproblems. According to \cite{li2025exact}, the optimal value of the $j$th subproblem is
\[\max_{U_j\in \Re^{n}} \left\{\left\|A^{\top} U_j\right\|_F^2: U_j^{\top}U_j= 1, U_{ij} (1-Z_{ij})=0, \forall i\in [n] \right\}=\lambda_{\max} \bigg(\sum_{i\in [n]} Z_{ij} a_ia_i^{\top}\bigg).\]
Plugging these values into \ref{eq:ds_spca} yields the objective of \eqref{eq:ds_binary_spca}. Finally,
we introduce a slack variable $Z_{i,d+1}\in\{0,1\}$ for each $i\in[n]$ to indicate whether the feature $i$ is used to construct principal components. This 
converts the original constraints $Ze\le e$ in \ref{eq:ds_spca} into $\sum_{j=1}^{d+1}Z_{ij}=1$ and completes the proof.
\end{pf}

Because the feasible set of \eqref{eq:ds_binary_spca}, denoted by $\set X$, is generally not comonotone, the complexity results from Section~\ref{sec:poly} and \ref{sec:inv} do not apply directly.  Fortunately, we can invoke Proposition~\ref{prop:eliminating-linear-constraints} to derive the optimality condition for \eqref{eq:ds_binary_spca}.
\begin{definition}
For any $V\in\R^{n\times(d+1)}$  and $\gamma\in\R^{d+1}$, define 
\[ \set{Q}^{DS}(V,\gamma)\defeq\left\{ Z\in\set X\left|\; \begin{aligned}
	&Z_{ij}=0,\;\forall i\in[n],\; j \notin  \argmax\{ V_{ij'}-\gamma_{j'}:\;j'\in[d+1]\}\\
	&\sum_{i=1}^n Z_{ij}=s_j \text{ if }\gamma_j>0, \;\forall j\in[d]
\end{aligned} \right. \right\}.  \]
\end{definition}
\begin{lemma}\label{lem:ds-spca-optimality}
For any $V\in\R^{n\times(d+1)}$, there exists a $\gamma\in\R^{d+1}_+$ with $\gamma_{d+1}=0$ such that  $\bar Z$ is an optimal solution to $\displaystyle \max_{Z\in\set{X}}\; \langle V,Z\rangle$ if and only if $\bar Z \in\set{Q}^{DS}(V,\gamma)$.
\end{lemma}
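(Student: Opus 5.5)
The plan is to obtain the lemma as a direct instance of Proposition~\ref{prop:eliminating-linear-constraints}, with the feasible set $\set X$ of \eqref{eq:ds_binary_spca} split as $\set X=\set S\cap\set P$. Here $\set S\defeq\{Z\in\{0,1\}^{n\times(d+1)}:\ \sum_{j=1}^{d+1}Z_{ij}=1\ \forall i\in[n]\}$ is the set of row-assignment matrices. The polyhedron $\set P\defeq\{Z:\ \sum_{i=1}^n Z_{ij}\le s_j\ \forall j\in[d]\}$ collects the $m=d$ column-budget constraints. The linear objective is $\langle V,Z\rangle$, i.e.\ $v=\mathrm{vec}(V)$. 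Proposition~\ref{prop:eliminating-linear-constraints} never uses comonotonicity of $\set S$; it only needs the convex-hull identity of Assumption~\ref{assume:affine-restriction}. So the first task is to check that identity, and the rest is unpacking the three-set intersection in \eqref{eq:intersection-three-sets}.

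First step: verify $\conv(\set S\cap\set P)=\conv(\set S)\cap\set P$. We have $\conv(\set S)=\{Z\ge0:\ \sum_j Z_{ij}=1\}$, since it is a product of simplices. Hence $\conv(\set S)\cap\set P$ is the polytope described by the row equalities, the column inequalities for $j\in[d]$, and $Z\ge0$. The constraint matrix is the vertex–edge incidence matrix of the complete bipartite graph between rows $[n]$ and columns $[d+1]$, with the column-$(d+1)$ constraint deleted. This matrix is totally unimodular, and the right-hand sides $1$ and $s_j$ are integral. Therefore every vertex is integral, hence lies in $\set S\cap\set P$, which gives the identity. I expect this to be the only step requiring genuine justification: the TU argument for a transportation-type system with a missing column constraint. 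It is standard, since deleting rows preserves total unimodularity.

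Second step: apply Proposition~\ref{prop:eliminating-linear-constraints}. It yields multipliers $\tilde\gamma\in\R^d_+$ for the $d$ budget constraints, namely an optimal dual vector of the convexified LP. Extend them by $\gamma_{d+1}\defeq0$ to $\gamma\in\R^{d+1}_+$. Then $M^\top\tilde\gamma$ corresponds to the matrix $e\gamma^\top$, with entry $\gamma_j$ in every position $(i,j)$. The proposition states that $\bar Z$ is optimal if and only if $\bar Z\in\set X\cap\set P_=(\gamma)\cap\argmax_{Z\in\set S}\langle V-e\gamma^\top,Z\rangle$. Fixing a single dual optimum before quantifying over $\bar Z$ is legitimate: complementary slackness holds between any primal optimum and any dual optimum. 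This is exactly how the "if and only if" in the proposition is used.

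Third step: identify the three sets with the two defining conditions of $\set Q^{DS}(V,\gamma)$. Because $\set S$ is a Cartesian product over rows of the choices of one column per row, the maximization over $\set S$ decouples by rows. Row $i$ may place its single one in any column of $\argmax_{j'}\{V_{ij'}-\gamma_{j'}\}$. Hence this argmax set is precisely $\{Z\in\set S:\ Z_{ij}=0\ \forall j\notin\argmax_{j'}(V_{ij'}-\gamma_{j'})\}$. The set $\set P_=(\gamma)$ imposes $\sum_iZ_{ij}=s_j$ exactly for those $j\in[d]$ with $\gamma_j>0$. Intersecting with $\set X$ yields $\set Q^{DS}(V,\gamma)$, which completes the plan.
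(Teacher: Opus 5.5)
Your proposal is correct and follows essentially the same route as the paper: split $\set X=\set S\cap\set P$ into row-assignment matrices and column budgets, invoke Proposition~\ref{prop:eliminating-linear-constraints}, and decouple the Lagrangian maximization over $\set S$ row by row. The only difference is that you verify the convex-hull identity of Assumption~\ref{assume:affine-restriction} through total unimodularity of the bipartite incidence matrix, which is more explicit than the paper's brief appeal to transversal-matroid structure.
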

\begin{pf}
We write $\set X=\set S\cap \set P$, where \[\set{S} = \bigg\{ Z\in\{0,1 \}^{n\times(d+1)}: \sum_{j=1}^{d+1}Z_{ij}=1, \;\forall i\in[n] \bigg\}\] is the set of bases of a transversal matroid, and \[\set{P}=\bigg\{ Z\in\R^{n\times(d+1)}:  \sum_{i=1}^n Z_{ij}\le s_j, \;\forall j\in[d]\bigg\}\] is the corresponding transversal matroid polytope. 
The decomposition implies that $\conv(\set S\cap \set P)=\conv(\set S)\cap\set{P}$, and hence  Assumption~\ref{assume:affine-restriction} holds for \eqref{eq:ds_binary_spca}.  By Proposition~\ref{prop:eliminating-linear-constraints}, there exists $\gamma\in\R_+^d$ such that $
\bar Z\in \mathop{\argmax}\limits_{Z\in\set{X}} \langle V,Z\rangle$
if and only if (i) $\bar Z\in\set{X}$, (ii) $\sum_{i=1}^n \bar Z_{ij}=s_j$ for all $j\in[d]$ with $\gamma_j>0$, and (iii)
\begin{equation*}\label{eq:ds-spca-lifted-lp}
	\begin{aligned}
		\bar Z\in\mathop{\argmax}_{Z\in\set{S}}\;\langle V,Z \rangle -\gamma^\top(Z^\top e-s)= \mathop{\argmax}_{Z\in\set{S}}\;&\sum_{i=1}^n\left[  \sum_{j=1}^{d+1} (V_{ij}-\gamma_j) Z_{ij} \right].
	\end{aligned}
\end{equation*}
Since $\set S$ imposes a $(d+1)$-choose-one constraint for each row $i$, condition (iii) amounts to requiring that $\bar Z_{ij}$ can be nonzero only for indices $j$ attaining $\argmax_{j'\in[d+1]}\{V_{ij'}-\gamma_{j'}\}$. Together with (i) and (ii), we obtain $\bar Z\in \set{Q}^{DS}(V,\gamma)$.  
\end{pf}

We now state the complexity result for \ref{eq:ds_spca}.
\begin{proposition}\label{prop:disjoint-spca}
The following hold for \ref{eq:ds_spca}:
	\begin{enumerate}[(i)]
		\item There exists a collection of  $\mathcal{O}\left(( n(d+1)^2 )^{\frac{d(r^2+r+2)}{2}-1}\right)$ candidate supports for \ref{eq:ds_spca}, among which at least one support is optimal.
		\item 	\ref{eq:ds_spca} can be solved in  $\mathcal{O}\!\left(( n(d+1)^2 )^{\frac{d(r^2+r+2)}{2}-1}dn^2\right)$ time.
\end{enumerate}
\end{proposition}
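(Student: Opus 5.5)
The plan is to run the three-step framework of Section~\ref{sec:poly} on the binary reformulation \eqref{eq:ds_binary_spca}. Lemma~\ref{lem:ds-spca-optimality} takes the place of comonotonicity, and the lifted parameter space includes the multiplier $\gamma$, in the spirit of Proposition~\ref{prop:binary-almost-standard-comonotone}. For each $j\in[d]$, set $M_j(Z)=\sum_{i}Z_{ij}a_ia_i^\top$. The objective $\sum_{j\in[d]}\lambda_{\max}(M_j(Z))$ is convex, and it depends on $Z$ only through the linear image $(M_1(Z),\dots,M_d(Z))$. Each $M_j$ lies in the space of symmetric $r\times r$ matrices, so this image has dimension $d(r^2+r)/2$, which is the rank. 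Proposition~\ref{prop:linear} then gives symmetric matrices $C_1,\dots,C_d$ with the following property: every maximizer of $\langle V,Z\rangle$ over $\set X$ is optimal for \eqref{eq:ds_binary_spca}, where $V_{ij}=a_i^\top C_j a_i$ for $j\in[d]$ and $V_{i,d+1}=0$. Thus $V$ is a linear function of a parameter $c\in\R^{d(r^2+r)/2}$.

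Next, I would apply Lemma~\ref{lem:ds-spca-optimality} to this $V$. It yields $\gamma\in\R^{d}_+$ (with $\gamma_{d+1}=0$) such that the optimal set of the linear counterpart is exactly $\set Q^{DS}(V,\gamma)$. I would therefore work in the lifted space of $(c,\gamma)\in\R^{q}$ with $q=d(r^2+r)/2+d=d(r^2+r+2)/2$. On this space I would use the arrangement of the hyperplanes
\[
V_{ij}(c)-\gamma_j=V_{ij'}(c)-\gamma_{j'},\quad i\in[n],\ 1\le j<j'\le d+1,
\]
together with $\gamma_j=0$ for $j\in[d]$. There are $p=n\binom{d+1}{2}+d=\O(n(d+1)^2)$ hyperplanes, and all of them pass through the origin. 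Hence, by Lemma~\ref{lem:HA-counting}, the arrangement has $\O(p^{q-1})=\O\big((n(d+1)^2)^{\frac{d(r^2+r+2)}{2}-1}\big)$ cells.

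Within one cell, two things are constant: each argmax set $\argmax_{j'}\{V_{ij'}-\gamma_{j'}\}$, and the sign pattern of $\gamma$. Consequently $\set Q^{DS}(V(c),\gamma)$ does not change across the cell. The cell containing the pair $(c^*,\gamma^*)$ produced above therefore has $\set Q^{DS}$ equal to the optimal set of the linear counterpart. By Proposition~\ref{prop:linear}, every element of this set is optimal for \eqref{eq:ds_binary_spca}. So it suffices to extract one candidate $Z$ (equivalently, one support pattern) per cell, whenever $\set Q^{DS}$ is nonempty there. This proves (i).

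For (ii), computing the candidate of a cell means finding a point of $\set Q^{DS}$. This is a bipartite assignment problem: row $i$ is assigned to exactly one of its allowed columns, and column $j$ has capacity $s_j$, which must be met exactly when $\gamma_j>0$. I would solve it as a max-flow problem with lower bounds, or as a transportation LP with integral data. The flow network has $\O(nd)$ arcs, so the claimed $\O(dn^2)$ per cell is within reach, for example via augmenting paths. Evaluating the objective needs only $d$ eigenvalues of $r\times r$ matrices, and since $r$ is fixed this is dominated by the flow cost. Multiplying the per-cell cost by the number of cells gives the stated running time.

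I expect the main obstacle to be the existence step. Lemma~\ref{lem:ds-spca-optimality} supplies a $\gamma$ for each fixed $V$, and we must make sure that the lifted point $(c^*,\gamma^*)$ lands in a cell on which $\set Q^{DS}$ coincides with the true optimal set; in particular, the \emph{if and only if} in that lemma is what allows an arbitrary element of $\set Q^{DS}$ to be used. A second, minor point is to verify the per-cell $\O(dn^2)$ flow bound, including the exact-capacity constraints.
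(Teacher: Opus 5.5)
Your proposal is correct and follows the paper's proof essentially step for step. Both use the same lifted arrangement in $(c,\gamma)$-space with $n\binom{d+1}{2}+d$ central hyperplanes, giving $\O\big((n(d+1)^2)^{\frac{d(r^2+r+2)}{2}-1}\big)$ cells; both take one representative of $\set{Q}^{DS}$ per cell, justified by the ``if and only if'' in Lemma~\ref{lem:ds-spca-optimality}; and both compute that representative by an $\O(dn^2)$ max-flow per cell. Your hyperplanes $V_{ij}(c)-\gamma_j=V_{ij'}(c)-\gamma_{j'}$ use the sign convention consistent with the definition of $\set{Q}^{DS}$, whereas the paper writes $+\gamma_j$; the two arrangements are mirror images under $\gamma\mapsto-\gamma$ and have the same cell count.
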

\begin{pf}
Note that the objective of \eqref{eq:ds_binary_spca} is a sum of $d$ terms, where, for each $j\in [d]$, the $j$th term represents the largest eigenvalue of $\sum_{i\in [n]} Z_{ij} a_ia_i^{\top}$.
Analogous to the objective function of \eqref{eq:spca_binary}, each term is convex in $Z$ and has rank $(r^2+r)/2$. Consequently, the objective function in \eqref{eq:ds_binary_spca}  has rank $\tilde{r}\defeq d(r^2+r)/2$ and can be written in the form $f(\mathcal{L} Z)$, where $f$ is convex, and $\mathcal{L}:\R^{n\times (d+1)}\to \R^{\tilde{r}}$ is a linear operator. Let $\mathcal{L}^\top:\R^{\tilde{r}}\to\R^{n\times (d+1)}$ denote its adjoint. 

Let $\mathcal{A}$  be the hyperplane arrangement induced by the following hyperplanes
\begin{align*}
	&H_{ij\ell}=\left\{ (c,\gamma)\in \R^{\tilde{r}}\times\R^{d}:\;(\mathcal{L}^\top c)_{ij}+\gamma_j=(\mathcal{L}^\top c)_{i\ell}+\gamma_\ell \right\}, &\forall\,i\in[n],\;j\neq \ell \in[d+1],\\
	&H_j=\left\{(c,\gamma)\in\R^{\tilde{r}}\times\R^{d}:\;\gamma_j=0\right\},&\forall\,j\in[d],
\end{align*}
where we treat $\gamma_{d+1}=0$ and $(\mathcal{L}^{\top}c)_{i,d+1}=0\;\forall i\in[n]$ as fixed constants for notational convenience.
Then it follows from \Cref{lem:HA-counting} that
\begin{equation}\label{eq:ds-num-HA}
	|\A|=\mathcal{O}\left(\left( \frac{d(d+1)}{2}n+d \right)^{\tilde r+d-1}\right)=\mathcal{O}\left(\left( n(d+1)^2 \right)^{\frac{d(r^2+r+2)}{2}-1}\right).
\end{equation}

Combining \Cref{prop:linear} and \Cref{lem:ds-spca-optimality}, we obtain a certain $c\in\R^{\tilde{r}}$ and $\gamma\in\R^{\tilde{r}}$ such that $\set{Q}^{DS}(\mathcal{L}^\top c, \gamma)\neq\emptyset$ and any member of $\set{Q}^{DS}(\mathcal{L}^\top c, \gamma)$ is optimal for \eqref{eq:ds_binary_spca}. Furthermore, by the same reasoning as in the proof of \Cref{lem:HA}, the set $\set{Q}^{DS}(\mathcal{L}^\top c,\gamma)$ is constant within each region of $\mathcal{A}$. Therefore, selecting one representative point from $\set{Q}^{DS}(\mathcal{L}^\top c,\gamma)$ per region and collecting all such points yields a set of candidates that contains an optimal solution to \eqref{eq:ds_binary_spca}. 

To prove the complexity,  it remains to bound the cost per region. We first note that the feasibility problem over $\set{Q}^{DS}(\mathcal{L}^\top c, \gamma)$ is essentially a transshipment problem, which can be transformed into a maximum flow problem using standard techniques; see \cite[Section~11.6]{schrijver2003combinatorial}. Therefore, for each region, we can find a representative $Z\in\set{Q}^{DS}(\mathcal{L}^\top c, \gamma)$ in $O(dn^2)$ time using the Ford-Fulkerson Algorithm. Second, for each solution candidate $Z$,  evaluating the objective of \eqref{eq:ds_binary_spca} amounts to  computing the largest eigenvalues of $d$ symmetric matrices in $\R^{r\times r}$, which costs $\textsf{T}_1=\mathcal{O}(dr^2)$ in total. Together with \eqref{eq:ds-num-HA}, the overall running time is 
\[ |\A|\cdot (O(dn^2)+\mathcal{O}(dr^2))=\mathcal{O}\left(\left(  n(d+1)^2 \right)^{\frac{d(r^2+r+2)}{2}-1} dn^2\right). \]
This completes the proof. 
\end{pf}

Compared with \cite{del2023sparse}, \Cref{prop:disjoint-spca} provides a simpler analysis and improves the complexity bound $\mathcal{O}\!\big((dn)^{d^2(r^2+r)/2}\,(dnr^2+d^3n^5\log n)\big)$ reported therein.

\section{Conclusion}\label{sec:conclusion}
In this paper, we take a geometric view of the fixed-rank convex maximization problem \eqref{eq:general-concave}. We demonstrate that when the feasible region satisfies comonotonicity, a generalized symmetry introduced via conjugacy, the problem is polynomially solvable under mild conditions. This theoretical framework unifies and strengthens many existing complexity results in the literature.  In the standard comonotone case, we further sharpen the complexity bounds by partitioning an appropriate lifted parameter space.

Beyond these theoretical contributions, our analysis also yields a considerably simpler hyperplane-arrangement-based algorithm with  weaker dependence on the rank parameter than prior constructions in the literature. In particular, for standard comonotone sets, the dominant support search scales as $\mathcal{O}(n^{r})$ rather than $\mathcal{O}\!\left(n^{r(r+1)/2}\right)$. The reduced rank sensitivity and the simpler construction pave the way for practical implementations for solving reasonable low-rank approximations or convex relaxations of \eqref{eq:general-concave}. Building such implementations and further using low-rank surrogates to tackle the generic problem are left for future work.

\bibliographystyle{informs2014} %
\bibliography{Reference.bib} %

\end{document}